\newtheorem{theorem}{Theorem}[section]
\newtheorem{lemma}[theorem]{Lemma}
\newtheorem{proposition}[theorem]{Proposition}
\newtheorem{corollary}[theorem]{Corollary}
\theoremstyle{definition}
\newtheorem{definition}[theorem]{Definition}
\theoremstyle{remark}
\newtheorem*{remark}{Remark}
\def\paragraph#1{\noindent \textbf{#1}}
\numberwithin{equation}{section}
\def\supp{\mathop{\mathrm{supp}}\nolimits}
\def\supp{\mathop{\rm supp}\nolimits}
\def\d{\mathrm{d}}
\def\<{\langle}
\def\>{\rangle}
\def\a{\alpha}
\def\b{\beta}
\def\e{\epsilon}
\def\g{\gamma}
\def\r{\rho}
\def\s{\sigma}
\def\t{\tau}
\def\z{\zeta}
\def\o{\omega}
\def\D{\Delta}
\def\G{\Gamma}
\def\S{\Sigma}
\def\R{{\Bbb R}}  
\def\N{{\Bbb N}}  
\def\P{{\Bbb P}}  
\def\E{{\Bbb E}}  
\def\T{{\Bbb T}}
\let\cal=\mathcal
\def\EE{{\cal E}}
\def\FF{{\cal F}}
\def\LL{{\cal L}}
\def\MM{{\cal M}}
\def\OO{{\cal O}}
\def\VV{{\cal V}}
\def\UU{{\cal U}}
\def\VV{{\cal V}}
\def\ZZ{{\cal Z}}
 \def \G {{\Gamma}}
 \def \b {{\beta}}
 \def \e {{\varepsilon}}
 \def \s {{\sigma}}
 \def \z {{\zeta}}
 \def \D {{\Delta}}
 \def \t {{\tau}}
 \def \T {{\Theta}}
 \def \g {{\gamma}}
 \def \d {{\delta}}
 \def \a {{\alpha}}
 \def \o {{\omega}}
 \def \r {{\rho}}
 \def \ba {\begin{array}}
 \def \ea {\end{array}}
 \newcommand{\be}{\begin{equation}}
 \newcommand{\ee}{\end{equation}}
\newcommand{\bea}{\begin{eqnarray}}
 \newcommand{\eea}{\end{eqnarray}}
\def\TH(#1){\label{#1}}\def\thv(#1){\ref{#1}}
\def\Eq(#1){\label{#1}}\def\eqv(#1){(\ref{#1})}
\def\sfrac#1#2{{\textstyle{#1\over #2}}}
 \def \1{\mathbbm{1}}
\def\BibTeX{{\rm B\kern-.05em{\sc i\kern-.025em b}\kern-.08em
    T\kern-.1667em\lower.7ex\hbox{E}\kern-.125emX}}
\begin{document}
\begin{abstract}
In this paper we analyse the genetic evolution of a diploid hermaphroditic population, which is modelled by a three-type nonlinear birth-and-death process with competition and Mendelian reproduction.
In a recent paper, \citen{CMM13} have shown that, on the mutation time-scale, the process converges to the Trait-Substitution Sequence of adaptive dynamics, stepping from one homozygotic state to another with  higher fitness. We prove that,  under the assumption that a dominant allele is also the fittest one, the recessive allele survives for a time of order at least $K^{1/4-\a}$, where $K$ is the size of the population and $\a>0$.
\end{abstract}

 \title[]
{Survival of a recessive allele in a Mendelian diploid model}

\author[A. Bovier]{Anton Bovier} 
\address{A. Bovier\\Institut f\"ur Angewandte Mathematik\\
Rheinische Friedrich-Wilhelms-Universit\"at\\ Endenicher Allee 60\\ 53115 Bonn, Germany}
\email{bovier@uni-bonn.de }
\author[R. Neukirch]{Rebecca Neukirch}
\address{R. Neukirch\\Institut f\"ur Angewandte Mathematik\\
Rheinische Friedrich-Wilhelms-Universit\"at\\ Endenicher Allee 60\\ 53115 Bonn, Germany}
\email{rebecca.neukirch@iam.uni-bonn.de}

\subjclass{60K35,92D25,60J85}
\thanks{We acknowledge financial support from the German Research Foundation (DFG) 
	through the
	\emph{Hausdorff Center for  Mathematics}, the Cluster of Excellence \emph{ImmunoSensation}, and
	the Priority Programme SPP1590 \emph{Probabilistic Structures in Evolution}.
We thank Loren Coquille for help with the numerical simulations and for fruitful discussions.}
\maketitle
\noindent\footnotesize{\emph{Keywords} Adaptive dynamics, population genetics, Mendelian reproduction, diploid population, nonlinear birth-and-death process, genetic variability.}
\section{Introduction}
 Mendelian diploid models have been studied for over a century in context of \emph{population genetics} (see e.g. \citen{yule06}, \citen{fisher18}, \citen{wright31}, \citen{haldane24a,haldane24b}). 
 See, e.g., \citen {crowkimura,nagylaki92,ewens04,buerger2000} for text book expositions of population genetics.  
 While population genetics typically deal with models of fixed population size,
 adaptive dynamics, a variant that developed in the 90ies (e.g. \citen{HS90, ML92, MN92}), 
 allows for variable population sizes that are controlled by competition kernels that rule the competitive interaction of populations with different 
 phenotypes resp. geographic locations. Diploid models have been considered in adaptive dynamics as early as 1999 \citen {KG99}. 
 In a recent book by \citen{M12}, the general biological explanation for the adaptive dynamics of Mendelian population was 
developed.
 
 Starting in the mid-90ies, stochastic individual based models 
 were introduced and investigated that allow for a rigorous derivation of many of the predictions of adaptive dynamics on the basis of
 convincing models for populations of interacting individuals that incorporate the canonical genetic  mechanisms of birth, death, mutation, and competition 
 (see, e.g.,  \citen{DL96,C_CEAD,C06,F_MA,C_ME,CM11}). 
 An important and interesting feature of these models is that various scaling limits when the carrying capacity tends to infinity while mutation rates and mutation step-size  tend to zero yield different 
 limit processes on different time-scales. In this way, 
\citen{C06} proof   convergence to the \emph{Trait Substitution Sequence} (TSS) (see, e.g. 
 \citen{DL96, MG96}) and to the \emph{Canonical Equation of Adaptive Dynamics} (CEAD). In \citen{CM11} also the phenomenon of \emph{evolutionary branching} 
 under the assumption of coexistence
 is derived rigorously. 
In a recent paper \citen{B14}  the convergence to the CEAD  is  shown in the simultaneously combined limits of large population, rare mutations and small mutational steps. 

The models considered so far in this context  almost exclusively assume haploid populations with a-sexual reproduction. Exceptions are  the  paper
\citen{CMM13}, where the TSS is derived in a Mendelian diploid model under certain assumptions (that we will discuss below) and more recently some papers by 
\citen{coron, coron2, coron3}.
In the present paper we pick up this line of research and  study a diploid population with Mendelian reproduction similar to the one of 
\citen{CMM13}, but with one notable difference in the assumptions.  
Each individual is characterised by a reproduction and death rate which depend on a phenotypic
 trait (e.g. body size, hair colour, rate of food intake, age of maturity) determined by its genotype, for which there exists two alleles $A$ and $a$ on one single locus. 
We examine the evolution of the trait distribution of the three genotypes $aa, aA$ and $AA$ under  the three basic mechanisms: 
heredity, mutation and selection. Heredity transmits traits to new offsprings and thus ensures the continued existence of the trait distribution. 
Mutation produces variation in the trait values in the population onto which selection is acting. 
Selection is a consequence of competition for resources or area between individual.
\citen{CMM13} have shown that in the limit of large population  and  rare mutations, 
and under the assumption of co-dominance,  the suitably time-rescaled process, converges to the TSS model of adaptive dynamics, 
essentially as shown in \citen{C06} in the haploid case. 
We now reverse the assumption of  \citen{CMM13} that the  alleles $a$ and $A$ are co-dominant and 
assume instead that $A$ is the fittest and 
dominant allele, i.e., the genotypes $aA$ and $AA$ have the same phenotype. We show that this has a dramatic effect on the 
evolution of the population and, in particular, leads to a much prolonged survival of the "unfit" phenotype $aa$ in the 
population.
More precisely, we prove that the mixed type $aA$ decays like $1/t$, in contrast to the exponential decay in 
\citen{CMM13}. This type of behaviour has been observed earlier in the context of population genetic models, see. e.g. \citen{nagylaki92}, Chapter 4. 
The main result of the present  paper is  to show that this fact translated in the \emph{stochastic} model into survival of the less fit phenotype for a time of order almost $K^{1/4}$,
when $K$ is the \emph{carrying capacity} (i.e. the order of the total population size). Let us emphasise that the main difficulty in our analysis is to control the behaviour of the 
stochastic system over a time horizon that diverges like a power of $K$. This precludes in particular the use of functional laws of large numbers, or the like.
 Instead, our proof relies on  the stochastic Euler scheme  developed in \citen{B14}. One could probably give a heuristic derivation of this fact in the context of the
 diffusion approximation in the one locus two allele model of population genetics (see, e.g. \citen{ewens04}), but we are not aware of a reference where this has actually been carried out.

 Sexual reproduction in a diploid population
amounts for the newborn to choose at random one of the two alleles of each parent for its genotype. Hence, unfit alleles 
can survive in individuals with mixed genotype and individuals with a pure genotype are potentially able to reinvade in the population under 
certain circumstances, i.e. a new mutant allele $B$ that appears before the extinction of
the $a$ allele that has strong competition with the $AA$ population but weak competition with the $aa$ population
may lead to a resurgence of the aa population at the expense of the $AA$ population and coexistence of the
types $aa$ and $BB$. This would increase the genetic variability of the population.
 In other words, if we choose the mutation time scale in such a way that there remain enough $a$-alleles in the 
 population when a new mutation occurs and if the new mutant can coexist with the unfit $aa$-individuals, then the 
 $aa$-population can recover.  Numerical simulations show that this can happen but requires subtle tuning of parameters.
 This effect will  be analysed in a forthcoming publication.  Related questions have recently been 
addressed in haploid models by \citen{BiSma15}.

\section{Model setup and goals}

\subsection{Introduction of the model}
We consider a Mendelian diploid model introduced by \citen{CMM13}.  
It models a population of a finite number of individuals with sexual reproduction, where
each individual $i$ is characterised by two alleles, $u_1^iu_2^i$, from some  allele space $\mathcal{U}\in\R$.  These two 
alleles define the genotype of individual $i$, which in turn defines its phenotype, $\phi(u_1^iu_2^i)$, through a function 
$\phi:\mathcal{U}^2\rightarrow\R$. We suppress parental effects, thus $\phi(u_1^iu_2^i)=\phi(u_2^iu_1^i)$. 
The individual-based microscopic Mendelian diploid model is a non-linear stochastic birth-and-death process. Each 
individual has a Mendelian reproduction rate with mutation and a natural death rate. Moreover, there is an 
 additional death rate  due to ecological 
competition with the other individuals in the population. 
The following demographic parameters depend all on the phenotype, but we suppress this from the notation. 
Let us define
\vspace{-6pt}
\begin{center}
	\begin{tabular}{ll}
		  $f_{u_1u_2}\in\R_+$ 	& the per capita birth rate (fertility) of an individual with genotype $u_1u_2$.\\
 		 $D_{u_1u_2}\in\R_+$ 	& the per capita natural death rate of an individual with genotype $u_1u_2$.\\
		  $K\in\N$ 				& the parameter which scales the population size.\\
		  $\frac{c_{u_1u_2,v_1v_2}}{K}\in\R_+$ & \parbox[t]{1.0\textwidth}{the competition effect felt by an individual with genotype $u_1u_2$ from \\
		  						an individual with genotype $v_1v_2$.}\\
 		 $\mu_K\in\R_+$ & the mutation probability per birth event. Here it is independent of the \\
		 						&genotype.\\
		$\sigma>0$				&the parameter scaling the mutation amplitude.\\
			 \end{tabular}
 \end{center}
	\begin{center}
	\begin{tabular}{ll}
		$m(u,dh)	$				&\parbox[t]{1.0\textwidth}{mutation law of a mutant allelic trait $u+h\in\mathcal{U}$, born from an\\individual with allelic trait $u$.} 
 	 \end{tabular}
 \end{center}
 \vspace{3pt}
 Scaling the competition function $c$ down by a factor $1/K$ amounts to scaling the  population size to order $K$.
 $K$ is  called the \emph{carrying capacity}. We are interested in asymptotic  results when $K$ is very large.
 We assume rare mutation, i.e. $\mu_K\ll1$. Hence, if a mutation occurs at a birth event, only one allele changes 
 from $u$ to $u+\sigma h$ where $h$ is a random variable with law $m(u,dh)$ and $\sigma\in[0,1]$.

 At any time $t$, there is  a finite number, $N_t$, of individuals, each with  genotype in $\mathcal{U}^2$. 
 We denote by $u_1^1u_2^1,...,u_1^{N_t}u_2^{N_t}$ the genotypes of the population at time $t$. 
 The population, $\nu_t$,  at time $t$ is represented by the rescaled sum of Dirac measures on $\mathcal{U}^2$, 
\begin{align}
\nu_t=\frac{1}{K}\sum_{i=1}^{N_t}\delta_{u_1^iu_2^i}.
\end{align}
$\nu_t$ takes values in
\begin{align}
\mathcal{M}^K=\left\{\frac{1}{K}\sum_{i=1}^{n}
\delta_{u_1^iu_2^i}\Big|n\geq0, u_1^1u_2^1,...,u_1^nu_2^n\in\mathcal{U}^2\right\},
\end{align}
where  $\mathcal{M}$ denotes the set of finite, nonnegative measures on $\mathcal{U}^2$ 
equipped with the vague topology.
Define $\langle\nu,g\rangle$ as the integral of the measurable function $g:\UU^2\rightarrow\R$ with respect to 
the measure 
$\nu\in\MM^K$. Then $\langle\nu_t,\1\rangle=\frac{N_t}{K}$ and for any $u_1u_2\in\mathcal{U}^2$, the positive number 
$\langle\nu_t,\1_{u_1u_2}\rangle$ is called the \emph{density} at time $t$ of the genotype $u_1u_2$. 
The generator of the process is defined as in \citen{CMM13}:
First we define, for the genotypes $u_1u_2,v_1v_2$ and a point measure $\nu$, the Mendelian reproduction operator:
\be
(A_{u_1u_2,v_1v_2}F)(\nu)
=\frac{1}{4}\left[F\left(\nu+\frac{\d_{u_1v_1}}{K}\right)+F\left(\nu+\frac{\d_{u_1v_2}}{K}\right)+F\left(\nu+\frac{\d_{u_2v_1}}{K}\right)+F\left(\nu+\frac{\d_{u_2v_2}}{K}\right)\right]-F(\nu),
\ee
and the Mendelian reproduction-cum-mutation operator:
\begin{align}
(M_{u_1u_2,v_1v_2}F)(\nu)
=&\frac{1}{8}\int_{\R}\Bigg[\left(F\left(\nu+\frac{\d_{u_1+hv_1}}{K}\right)+F\left(\nu+\frac{\d_{u_1+hv_2}}{K}\right)\right)
m_\sigma(u_1,h)\nonumber\\
&\hspace{12pt}+\left(F\left(\nu+\frac{\d_{u_2+hv_1}}{K}\right)+F\left(\nu+\frac{\d_{u_2+hv_2}}{K}\right)\right)
m_\sigma(u_2,h)\nonumber\\
&\hspace{12pt}+\left(F\left(\nu+\frac{\d_{u_1v_1+h}}{K}\right)+F\left(\nu+\frac{\d_{u_2v_1+h}}{K}\right)\right)
m_\sigma(v_1,h)\nonumber\\
&\hspace{12pt}+\left(F\left(\nu+\frac{\d_{u_1v_2+h}}{K}\right)+F\left(\nu+\frac{\d_{u_2v_2+h}}{K}\right)\right)
m_\sigma(v_2,h)\Bigg]dh
-F(\nu).
\end{align}
The process $(\nu_t)_{t\geq0}$ is then 
 a $\mathcal{M}^K$-valued Markov process with  generator $L^K$, given for any bounded measurable
  function $F:\mathcal{M}^K\rightarrow\R$ by:
\begin{align}
(L^KF)(\nu)
&=\int_{\mathcal U^2}\left(D_{u_1u_2}+\int_{\mathcal U^2}c_{u_1u_2,v_1v_2}\nu(d(v_1v_2))\right)\left(F\left(\nu-\frac{\d_{u_1u_2}}{K}\right)-F(\nu)\right)K\nu(d(u_1u_2))\nonumber\\
&+\int_{\mathcal U^2}(1-\mu_K)f_{u_1u_2}\left(\int_{\mathcal U^2}\frac{f_{v_1v_2}}{\langle\nu,f\rangle}(A_{u_1u_2,v_1v_2}F)(\nu)\nu(d(v_1v_2))\right)K\nu(d(u_1u_2))\nonumber\\
&+\int_{\mathcal U^2}\mu_Kf_{u_1u_2}\left(\int_{\mathcal U^2}\frac{f_{v_1v_2}}{\langle\nu,f\rangle}(M_{u_1u_2,v_1v_2}F)(\nu)\nu(d(v_1v_2))\right)K\nu(d(u_1u_2)).
\end{align}
The first non-linear term describes the competition between individuals. 
The second and last linear terms describe the birth without and with mutation. There, 
$f_{u_1u_2}\frac{f_{v_1v_2}}{K\langle\nu,f\rangle}$ is the reproduction rate of an individual with genotype $u_1u_2$ with 
an individual with genotype $v_1v_2$. Note that we assume random mating with multiplicative fertility (i.e. that birth rate is proportional to the product of the fertilities of the mates).

For all $u_1u_2,v_1v_2\in\mathcal{U}^2$, we make the following Assumptions \textbf{(A)}:
\begin{itemize}
\item[\textbf{(A1)}]The functions $f,D$ and $c$ are measurable and bounded, which means that there exists $\bar f,\bar D,\bar c<\infty$ such that 
\begin{align}
0\leq f_{u_1u_2}\leq\bar f, \quad 0\leq D_{u_1u_2}\leq\bar D \quad \text{and}\quad 0\leq c_{u_1u_2,v_1v_2}\leq\bar c.\vspace{0pt}
\end{align}	
\item[\textbf{(A2)}] $f_{u_1u_2}-D_{u_1u_2}>0$ and there exists $\underline c>0$ such that 
$\underline c\leq c_{u_1u_2,v_1v_2}$.\vspace{6pt}
\item[\textbf{(A3)}] For any $\sigma>0$, there exists a function, $\bar m_\sigma:\R\rightarrow\R_+$, such that 
$\int\bar m_\sigma(h)dh<\infty$ and $m_\sigma(u,h)\leq\bar m_\sigma(h)$ for any $u\in\mathcal{U}$ and $h\in\R$.
\end{itemize}
For fixed $K$, under the Assumptions \textbf{(A1)} + \textbf{(A3)} and assuming that $\E(\langle\nu_0,\1\rangle)<\infty$, \citen{F_MA} have shown existence and uniqueness in law of a process with infinitesimal generator $L^K$. For $K\rightarrow\infty$, under more restrictive assumptions, and assuming the convergence of the initial condition, they prove the convergence in $\mathbb{D}(\R_+,\mathcal{M}^K)$ of the process $\nu^K$ to a deterministic process, which is the solution to a non-linear integro-differential equation. Assumption \textbf{(A2)} ensures that the population does not explode or becomes extinct too fast.
\subsection{Goal} 
We start the process with a monomorphic $aa$-population, where one mutation to an $A$-allele has already occurred. 
That 
means, the initial  population consists only of individuals with genotype $aa$ except one individual with genotype $aA$.
The mutation probability for an individual with genotype $u_1u_2$ is given by  $\mu_K$. Hence, the time 
until the next mutation in the whole population is of order $\frac{1}{K\mu_K}$. 
Since the time a mutant population needs to invade a resident population is of order $\ln K$ (see, e.g. \citen{CM11}), we set the mutation rate
 $\frac{1}{K\mu_K}\gg\ln(K)$ in order
  to be able to consider the fate of the mutant and the resident population without the occurrence of a new 
 mutation. In this setting, the allele space $\UU=\{a,A\}$ consist only of two alleles.
  Our results will imply that if the mutation rate is bigger than $\frac1{KK^{1/4-\a}}$, $\a>0$, then a mutation will occur while  the 
 resident $aa$-population is small, but still alive, in contrast to the setting of  \citen{CMM13}, where the 
 $a$-allele dies out by time $\ln K$. 
This different behaviour can be traced to the deterministic system that arises in the large $K$ limit.  
 Figure \ref{fig1} ($A$-allele fittest and dominant) and Figure \ref{fig5} ($a$ and $A$ alleles co-dominant) show 
 a simulation of the deterministic system of the two different models. We see that in the settings of \citen{CMM13} the 
 mixed type $aA$ dies out exponentially fast, whereas in the model where $A$ is the dominant allele, the mixed type 
 decays much slowly. We will show below that this is due to the fact that, 
 under our hypothesis, the stable fixed point of the deterministic system is degenerate, leading to an algebraic rather than 
 exponential approach to the fixed point.
 The main task is to prove that this translates into a survival of the unfit allele in the stochastic model for 
 a time of order $K^\b$. We show that this is indeed the case, with $\b= 1/4-\a$.
This implies that for mutation rates of order $1/K\ln K$, a further mutant will occur in the $AA$ population \emph{before} 
the $aa$ allele is extinct.

\subsection{Assumptions on the model}
Let $N_{uv}(t)$  be the number of individuals with genotype $uv\in\{aa,aA,AA\}$ in the population at time $t$ and 
set $n_{uv}(t)\equiv \frac{1}{K}N_{uv}(t)$.
\begin{definition}
The \emph{equilibrium size} of a monomorphic $uu$-population, $u\in\{a,A\}$, is the fixed point of  a
1-dim Lotka-Volterra 
equation and is given by 
\begin{align}
\label{equi}
\bar n_u=\frac{f_{uu}-D_{uu}}{c_{uu,uu}}.
\end{align} 
\end{definition}

\begin{definition}
For any $u,v\in\{a,A\}$,
\begin{align}
S_{uv,uu}=f_{uv}-D_{uv}-c_{uv,uu}\bar n_{u}
\end{align}
is called the \emph{invasion fitness} of a mutant $uv$ in a resident $uu$-population, where $\bar n_{u}$ is 
given by \eqv(equi).
\end{definition}

 We assume that the  dominant $A$-allele defines the phenotype of an individual, i.e. 
$AA$ and $Aa$ individuals have the same   phenotype.
In particular,  the fertility and the natural death rates are the 
same for $aA$- and $AA$-individuals. For simplicity,  we assume that the competition rates are the same 
for all the three different genotypes.
To sum up, we make the following Assumptions \textbf{(B)} on the rates: 
\begin{itemize}
\item[\textbf{(B1)}] $f_{aa}= f_{aA}\equiv f_{AA}=:f$,\vspace{3pt}	 
\item[\textbf{(B2)}] $D_{AA}\equiv D_{aA}=:D$\hspace{9pt} \emph{but} \hspace{9pt}$D_{aa}= D+\D$,\vspace{3pt}
\item[\textbf{(B3)}]$c_{u_1u_2,v_1v_2}=: c,$ \hspace{9pt}  $\forall u_1u_2,v_1v_2\in\{aa,aA,AA\}$.
\end{itemize}
\vspace{6pt}
\begin{remark}
We choose constant fertilities and constant competition rates for simplicity.
 What is really needed, it that the fitness of the $aA$ and $AA$ types are equal and higher than that of the $aa$ type.
\end{remark}

Observe that, under Assumptions  \textbf{(B)},
\begin{align}
\label{infit}
S_{aA,aa}=S_{AA,aa}&=f-D-c\bar n_{aa}=f-D-c\frac{f-D-\D}{c}=\D,\nonumber\\
S_{aa,aA}=S_{aa,AA}&=f-D-\D-c\bar n_{AA}=f-D-\D-c\frac{f-D}{c}=-\D.
\end{align}
Therefore, the $aA$-individuals are as fit as the $AA$-individuals and both are fitter than the $aa$-individuals.
In the 
Mendelian diploid model an individual chooses another individual   uniformly at random as partner for reproduction.
For example, if we want to produce an individual with genotype $aa$, there are four possible combinations for the parents: 
$aa\leftrightarrow aa, aa\leftrightarrow aA, aA\leftrightarrow aa$ and $aA\leftrightarrow aA$. The first combination results in 
an $aa$-individual with probability 1, the second and third one with probability $\frac{1}{2}$ and the last one with probability 
$\frac{1}{4}$. Using analogous reasoning for the other possible cases 
yields the following birth rates:
\begin{align}
b_{aa}(N_{aa}(t),N_{aA}(t),N_{AA}(t))
&=\frac{f\left(N_{aa}(t)+\frac{1}{2}N_{aA}(t)\right)^2}{N_{aa}(t)+N_{aA}(t)+N_{AA}(t)},\nonumber\\
b_{aA}(N_{aa}(t),N_{aA}(t),N_{AA}(t))
&=\frac{2f\left(N_{aa}(t)+\frac{1}{2}N_{aA}(t)\right)\left(N_{AA}(t)+\frac{1}{2}N_{aA}(t)\right)}{N_{aa}(t)+N_{aA}(t)+N_{AA}(t)},\nonumber\\
b_{AA}(N_{aa}(t),N_{aA}(t),N_{AA}(t))
&=\frac{f\left(N_{AA}(t)+\frac{1}{2}N_{aA}(t)\right)^2}{N_{aa}(t)+N_{aA}(t)+N_{AA}(t)}.
\end{align}
The death rates are the sum of the natural death and the competition:
\begin{align}
&d_{aa}(N_{aa}(t),N_{aA}(t),N_{AA}(t))=N_{aa}(t)\left(D+\D+c(n_{aa}(t)+n_{aA}(t)+n_{AA}(t))\right),\nonumber\\
&d_{aA}(N_{aa}(t),N_{aA}(t),N_{AA}(t))=N_{aA}(t)\left(D+c(n_{aa}(t)+n_{aA}(t)+n_{AA}(t))\right),\nonumber\\
&d_{AA}(N_{aa}(t),N_{aA}(t),N_{AA}(t))=N_{AA}(t)\left(D+c(n_{aa}(t)+n_{aA}(t)+n_{AA}(t))\right).
\end{align}
In the sequel, the \emph{sum process}, $\S(t)$,  defined by
\begin{align}
\label{sumprocess}
\S(t)=n_{aa}(t)+n_{aA}(t)+n_{AA}(t),
\end{align}
plays an important role. 
A simple calculation shows that the sum process jumps up (resp. down) by rate $b_\Sigma$ (resp. $d_\Sigma$) given by
\be
b_\S(N_{aa}(t),N_{aA}(t),N_{AA}(t))=fK\S(t),\qquad d_\S(N_{aa}(t),N_{aA}(t),N_{AA}(t))=DK\S(t)+\D N_{aa}(t).
\ee

\section{Main theorems}
\label{sectionmain}
In the sequel we denote by $\t_n$, $n\geq1$, the ordered sequence of times when a   mutation occurs in the population. 
We assume $\t_0=0$
and make following Assumption \textbf{C} on the mutation rate $\mu_K$:
\begin{align}
\label{mu}
\text{\textbf{(C)}}\quad\quad\quad\ln(K)\ll\frac{1}{K\mu_K}\ll K^{1/4-\a}.
\end{align}
We recall one result from \citen{CMM13} (Proposition D.2) which carry over to our setting:
it is shown that if the resident population $n_{aa}(t)$ is in a $\d$-neighbourhood of 
its equilibrium $\bar n_a$, then $n_{aa}(t)$  stays in this neighbourhood for an exponentially long time, as long as the 
mutant population is smaller than $\d$. The proof of this result is based on large deviation estimates  (see \citen{FW84}). 
\begin{proposition}[Proposition D.2 in \citen{CMM13}]
\label{propD}
Let $\supp(\nu_0^K)=\{aa\}$ and let $\t_1$ denote the first mutation time. For any sufficiently small $\d>0$, if 
$\langle\nu_0^K,\1_{aa}\rangle$ belongs to the $\d/2$-neighbourhood of $\bar n_a$ then the time of exit of 
$\langle\nu_t^K,\1_{aa}\rangle$ from the $\d$-neighbourhood of $\bar n_a$ is bigger than $e^{VK}\land\t_1$, for $V>0$, 
with probability converging to 1.
Moreover, there exists a constant $M$, such that, for any sufficiently small $\d>0$, this remains true, if the death rate of an 
individual with genotype $aa$,
\begin{align}
D+cK\langle\nu_t^K,\1_{aa}\rangle,
\end{align}
is perturbed by an additional random process that is uniformly bounded by $M\d$.
\end{proposition}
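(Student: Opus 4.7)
The plan is to reduce to a one-dimensional logistic birth-and-death process and invoke Freidlin--Wentzell large deviation theory. Before $\t_1$ and in the absence of types other than $aa$, the density $X_t^K := \langle \nu_t^K, \1_{aa}\rangle$ evolves as a density-dependent Markov chain with per-capita birth rate $f$ and per-capita death rate $D + \D + c X_t^K$. The associated deterministic ODE
$$
\dot x \;=\; x\bigl(f - D - \D - c x\bigr) \;=\; -\,c\,x\,(x - \bar n_a)
$$
has $\bar n_a = (f - D - \D)/c$ as its unique globally attracting fixed point on $(0,\infty)$, with linear contraction rate $-c\,\bar n_a < 0$. Consequently, on the closed annulus $\{|x-\bar n_a|\in[\d/2,\d]\}$ the drift points strictly inward with modulus bounded below by some $\k_\d > 0$.

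First, I would invoke the standard Freidlin--Wentzell exit estimate for density-dependent Markov chains (see \citen{FW84}): there exists $V = V(\d) > 0$ such that, for the first exit time $\t_\d^K$ of $X_\cdot^K$ from the $\d$-neighborhood of $\bar n_a$,
$$
\P\bigl[\t_\d^K < e^{VK}\bigr] \longrightarrow 0 \quad \text{as } K \to \infty.
$$
The quasi-potential $V(\d)$ is strictly positive because the inward-pointing drift makes excursions of size $\d$ exponentially costly in $K$. Since the $aa$-density cannot leave the $\d$-neighborhood without either crossing its boundary or absorbing a mutation, this yields $\t_\d^K \geq e^{VK}\wedge\t_1$ with probability tending to one, giving the first assertion of the proposition.

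For the second part, the strategy is a path-wise sandwich. Given any adapted perturbation $\h_\cdot$ with $\sup_t |\h_t| \leq M\d$, couple the perturbed $aa$-process between two unperturbed logistic birth-and-death processes with per-capita death rates $D + c X_t^K \pm M\d$. These comparison dynamics are again logistic, with stable equilibria $\bar n_a \pm M\d/c$ (up to $O(\d^2)$); if $M$ is small enough---say $M < c/4$---both equilibria sit well inside the $\d/2$-neighborhood of $\bar n_a$, and the inward drift on the $\d$-annulus is preserved with strength at least $\k_\d/2$. Applying the Freidlin--Wentzell estimate to the two comparison processes (with a reduced but still positive quasi-potential) gives the same exponentially long exit time.

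The main obstacle is that $\h_\cdot$ is a general random, possibly non-Markovian, process, which prevents a direct application of Freidlin--Wentzell to the perturbed dynamics. The sandwich above circumvents this by removing any dependence on the structure of $\h_\cdot$ beyond its uniform bound $M\d$. Equivalently, one may verify that the Lyapunov function $\vp(x) = (x - \bar n_a)^2$ has drift bounded above by $-\k_\d\,\vp(x) + O(M\d\cdot\d)$ on the $\d$-annulus, and conclude via a standard exponential-martingale exit estimate in the spirit of \citen{FW84}. The smallness of $M$ is precisely what guarantees that the drift term dominates the perturbation at the relevant scale, and hence the threshold $M$ in the statement.
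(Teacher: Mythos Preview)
The paper does not give its own proof of this proposition: it is quoted verbatim from \citen{CMM13} (Proposition D.2), and the only hint about the argument is the sentence ``The proof of this result is based on large deviation estimates (see \citen{FW84}).'' So there is no detailed proof in the paper to compare against.

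Your sketch is consistent with that hint and is essentially the standard argument. Reducing to the one-dimensional logistic birth--death process, identifying $\bar n_a$ as a linearly stable equilibrium, and invoking Freidlin--Wentzell exit-time estimates for density-dependent Markov chains is exactly how such statements are obtained (cf.\ the analogous Proposition~2 in \citen{C06}). For the perturbed part, your sandwich between two logistic processes with death rates shifted by $\pm M\d$ is the right idea: the key point---which you state---is that for $M$ small the shifted equilibria remain inside the $\d/2$-ball, so the inward drift on the $\d$-annulus persists and the quasi-potential stays bounded away from zero. This is precisely the content of the ``Moreover'' clause and is how the result is used later (e.g.\ in Phase~1, where the small mutant population contributes a competition term of size $O(\d)$ to the $aa$-death rate).

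Two minor points. First, your Lyapunov alternative at the end is correct in spirit but is not needed once the coupling is in place; the coupling already reduces the non-Markovian perturbed process to two genuine Markov processes to which Freidlin--Wentzell applies directly. Second, the constant $M$ in the statement should be thought of as an \emph{existential} threshold (there exists $M>0$ such that \ldots), not a free parameter; your condition $M < c/4$ (or any similar bound ensuring the shifted equilibria stay in the $\d/2$-ball) identifies such a threshold.
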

We start the population process when $n_{aa}$ is in a 
$\d/2$-neighbourhood of its equilibrium, $\bar n_a$, and  there is one individual with genotype $aA$. 
The first theorem says that there is a positive probability that the mutant population fixates in the resident $aa$-population and the second theorem gives the time for the invasion of the mutant population and a lower bound on the survival time 
of the recessive $a$-allele. Define
\begin{align}
\label{stopmut}
\t_{\d}^{mut}&\equiv\inf\{t\geq0: 2n_{AA}(t)+n_{aA}(t)\geq \delta\}, \\
\t_{0}^{mut}&\equiv\inf\{t\geq0: 2n_{AA}(t)+n_{aA}(t)=0\}.
\end{align}
\begin{theorem}[Proposition D.4 in \citen{CMM13}]
\label{fix}
Let $(z_K)$ be a sequence of integers such that $\frac{z_K}{K}$ converges to $\bar n_{a}$, for $K\rightarrow\infty$. Then
\begin{align}
\lim_{\d\rightarrow0}\lim_{K\rightarrow\infty}\mathbb{P}_{\frac{z_K}{K}\delta_{aa}+\frac{1}{K}\delta_{aA}}(\tau_\d^{mut}<\tau_0^{mut})=\frac{\Delta}{f},
\end{align}
where we recall that $\D$ is the invasion fitness of a mutant $aA$ in a resident $aa$-population (cf. \ref{infit}). 
\end{theorem}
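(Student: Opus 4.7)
The plan is to compare, while the $A$-allele is rare, the $aA$-population to a supercritical linear birth-and-death branching process with per-individual birth rate $f$ and death rate $f-\Delta$. The classical survival probability of such a process equals $(f-(f-\Delta))/f=\Delta/f$, and a two-sided coupling then transports this estimate to the quantity of interest.

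First fix $\delta>0$ small. Since $2n_{AA}(t)+n_{aA}(t)\leq\delta$ for $t<\tau_\delta^{mut}$, the competitive pressure felt by an $aa$-individual deviates from its equilibrium value $c\bar n_a$ by at most $c\delta$. Proposition \thv(propD), applied with perturbation bound $M\delta$, therefore produces an event $\mathcal{E}_\delta$ of probability tending to $1$ on which $|n_{aa}(t)-\bar n_a|\leq\delta$ for every $t\leq \tau_\delta^{mut}\wedge e^{VK}$. On this event, direct expansion of the explicit expressions for $b_{aA}$ and $d_{aA}$ yields
\[
b_{aA}(t)=fN_{aA}(t)\bigl(1+\eta^+(t)\bigr),\qquad d_{aA}(t)=(f-\Delta)N_{aA}(t)\bigl(1+\eta^-(t)\bigr),
\]
with $|\eta^\pm(t)|\leq C\delta$ for a constant $C$ depending only on the model parameters: the $aA\times aA$ and $aA\times AA$ contributions to $b_{aA}$ are of order $\big((N_{aA}+N_{AA})/N_{aa}\big)N_{aA}=O(\delta)N_{aA}$, and the competitive component of $d_{aA}$ differs from $c\bar n_a=f-\Delta$ by $O(\delta)$.

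I would then build, on an enlarged probability space via a standard thinning/augmentation of the Poisson random measures driving the birth and death events, two linear birth-and-death processes $Z^-$ and $Z^+$ started at $1$ with per-individual rates $(f(1-C\delta),(f-\Delta)(1+C\delta))$ and $(f(1+C\delta),(f-\Delta)(1-C\delta))$ respectively, satisfying
\[
Z^-(t)\leq N_{aA}(t)\leq Z^+(t),\qquad t\leq \tau_\delta^{mut},
\]
on $\mathcal{E}_\delta$. Classical linear birth-and-death theory gives $\P(Z^\pm\text{ survives})=\Delta/f+O(\delta)$, and conditional on survival each $Z^\pm$ grows exponentially at rate $\Delta(1+O(\delta))$, hence reaches $\delta K$ by a deterministic time of order $\ln K\ll e^{VK}$. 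Finally, a parallel martingale estimate shows that $N_{AA}(t)\leq C' N_{aA}(t)^2/(K\bar n_a)$ in probability throughout the window, so $2n_{AA}+n_{aA}$ and $n_{aA}$ cross the threshold $\delta$ at essentially the same time up to an error $O(\delta^2)$.

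Combining the three ingredients,
\[
\P(Z^-\text{ survives})-\P(\mathcal{E}_\delta^c)-o(1)\leq \P_{\frac{z_K}{K}\delta_{aa}+\frac{1}{K}\delta_{aA}}(\tau_\delta^{mut}<\tau_0^{mut})\leq \P(Z^+\text{ survives})+\P(\mathcal{E}_\delta^c)+o(1),
\]
and letting $K\to\infty$ followed by $\delta\to 0$ delivers $\Delta/f$. The main technical obstacle is controlling the coupling uniformly over the growth window of length $O(\ln K)$: this requires the joint use of Proposition \thv(propD) to pin down $N_{aa}$ in a $\delta$-neighbourhood of $\bar n_a$, together with the martingale estimate confirming that $N_{AA}$ remains subordinate to $N_{aA}^2/N_{aa}$ and thus does not contribute macroscopically to $2n_{AA}+n_{aA}$ before $N_{aA}$ itself reaches the threshold.
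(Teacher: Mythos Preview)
Your strategy is the right one in spirit---sandwich the mutant between two linear birth--death processes and invoke Proposition~\ref{propD}---but you track the wrong quantity, and this opens a genuine gap. The paper works with $A(t)=2n_{AA}(t)+n_{aA}(t)$, the (rescaled) number of $A$-alleles. A short computation gives
\[
b_A=fKA(t),\qquad d_A=(D+c\Sigma(t))KA(t),
\]
so $A$ is \emph{exactly} a linear birth--death process once $\Sigma$ is frozen; there is no immigration term and hence nothing to say about $N_{AA}$ separately. Since $\tau_\delta^{mut}$ and $\tau_0^{mut}$ are defined in terms of $A$ itself, the sandwich $A_l\preccurlyeq A\preccurlyeq A_u$ (with death rates $D+c(\bar n_a\pm M\delta)$) and Lemma~\ref{theorem4b} finish the job immediately.

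By contrast, you track $N_{aA}$ alone. Your expansion of $b_{aA}$ lists the $aA\times aA$ and $aA\times AA$ contributions but omits the $aa\times AA$ term, which is $2fN_{aa}N_{AA}/N\approx 2fN_{AA}$ and is \emph{not} $O(\delta)N_{aA}$ without further input. You acknowledge this implicitly by invoking a ``parallel martingale estimate'' $N_{AA}\le C'N_{aA}^2/(K\bar n_a)$, but this Hardy--Weinberg-type bound is not proven and is not free: it has to hold pathwise on the event $\{\tau_\delta^{mut}<\tau_0^{mut}\}$ over a window of length $O(\ln K)$, including at times when $N_{aA}$ is of order~$1$ and $N_{aA}^2/K\ll 1$. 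Establishing it rigorously would require its own coupling argument, and until then the upper sandwich $N_{aA}\le Z^+$ is unjustified (the $aa\times AA$ immigration pushes $N_{aA}$ upward). There is also a mild circularity: you need the $N_{AA}$ bound to set up the coupling for $N_{aA}$, but the natural route to controlling $N_{AA}$ goes through the dynamics of $N_{aA}$. All of this evaporates if you switch to $A(t)$; I would rewrite the proof around that variable.
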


We now state the main results of this paper:
\begin{theorem}
\label{main1}
Consider the model verifying Assumptions \textbf{A} and \textbf{B}. 
Let $\d>\e$, $\a>0$ and
\begin{align}
\t^{hit}_\eta&\equiv\inf\{t\geq\t_\d^{mut}: n_{aA}(t)\leq \eta\}.
\end{align}
Define $\t_{sur}\equiv\t^{hit}_{K^{-1/4+\a}}-\t^{hit}_\e$.
Then, conditional on survival of the mutant, i.e., on  the event $\{\tau_\d^{mut}<\tau_0^{mut}\}$, with probability converging to one as $K\uparrow \infty$, the following statements hold:
\begin{itemize}
\item[(i)]\quad$\t_{\e}^{hit}=\OO(\ln K)$, and
\item[(ii)]\quad$\t_{sur}=\OO(K^{1/4-\a})$. 
\end{itemize}
\end{theorem}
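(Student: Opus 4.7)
The plan is to compare the stochastic process $(n_{aa}(t),n_{aA}(t),n_{AA}(t))$ with the deterministic ODE arising as its fluid limit, exploiting the very specific structure imposed by Mendelian reproduction together with the dominance assumption \textbf{(B)}. The qualitative point is that, because the fitnesses of $aA$ and $AA$ coincide, the linearisation of the ODE at the fixed point $(0,0,\bar n_A)$ has a zero eigenvalue in the $n_{aA}$-direction: the birth/death balance of the mixed-type population cancels to leading order, and the only restoring drift comes from the excess death of $aa$-individuals, which through Mendelian reproduction are produced in proportion to $n_{aA}^2$. A direct computation shows that on the Hardy--Weinberg manifold $\{n_{aa}=\a^2\S,\ n_{aA}=2\a(1-\a)\S,\ n_{AA}=(1-\a)^2\S,\ \S=\bar n_A\}$, the $a$-allele frequency $\a=(2n_{aa}+n_{aA})/(2\S)$ obeys $\dot\a=-\D\a^2(1-\a)$, which is solved by $\a(t)=\a_0/(1+\D\a_0 t)\sim 1/(\D t)$, whence $n_{aA}(t)\sim 2\bar n_A/(\D t)$.

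Part (i) follows from the classical invasion machinery of \citen{CMM13,C06,F_MA}. Starting from the state at $\t_\d^{mut}$, the deterministic flow relaxes onto the Hardy--Weinberg manifold in $O(1)$ time and then crosses the level $n_{aA}=\e$ within a further $O(\ln(1/\e))$; the fluid law of large numbers of \citen{F_MA} keeps the stochastic process within $o(1)$ of this trajectory over any $O(\ln K)$ window, which, together with the logarithmic bound on the initial random invasion phase in which $N_{aA}$ grows from $1$ to $\d K$ (cf. Theorem~\ref{fix}), gives $\t_\e^{hit}=\OO(\ln K)$ on $\{\t_\d^{mut}<\t_0^{mut}\}$ with probability tending to one.

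For part (ii) the fluid limit is useless, because the horizon $K^{1/4-\a}$ is polynomial in $K$. I would adapt the stochastic Euler scheme of \citen{B14}: partition $[\t_\e^{hit},\t_\e^{hit}+K^{1/4-\a}]$ into intervals $I_k=[t_k,t_{k+1}]$ of length $\D t=K^{-\g}$ with some $\g\in(0,1/2)$, and on each interval decompose $n_{uv}(t_{k+1})-n_{uv}(t_k)=\int_{t_k}^{t_{k+1}}b_{uv}(n(s))\,ds+(M_{k+1}^{uv}-M_k^{uv})$, where $b_{uv}$ is the drift and $M^{uv}$ the pure-jump martingale associated with $n_{uv}$. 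Since the dominant jumps of $n_{aA}$ occur at rate $O(Kn_{aA})$ and have size $1/K$, the predictable bracket of $M^{aA}$ accumulated over $[0,K^{1/4-\a}]$ is bounded by $\int_0^{K^{1/4-\a}}n_{aA}(s)/K\,ds$, which in turn is $O(\ln K/K)$ because $n_{aA}(s)$ is of order $\min(\e,C/s)$. Doob's $L^2$-inequality then gives $\sup_t|M^{aA}(t)|=\OO(\sqrt{\ln K/K})=o(K^{-1/4+\a})$ with high probability, and a Gronwall-type induction over $k$ propagates the closeness $|n_{uv}(t_k)-n_{uv}^{det}(t_k)|=o(K^{-1/4+\a})$ step by step. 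Since $n_{aA}^{det}(\t_\e^{hit}+t)\sim 2\bar n_A/(\D t)$, this shows that $n_{aA}$ stays strictly above $K^{-1/4+\a}$ throughout $[\t_\e^{hit},\t_\e^{hit}+K^{1/4-\a}]$ while reaching that level by a time of the same order, giving $\t_{sur}=\OO(K^{1/4-\a})$.

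The main obstacle is this compounding of errors on the long horizon: the restoring drift of $n_{aA}$ is only quadratic in $n_{aA}$, so any additive deviation in the fast coordinates $n_{aa}$ and $\S$ feeds back immediately into the leading-order dynamics. Two ingredients close the argument. First, both $\S$ and $n_{aa}$ relax exponentially fast (at rates $c\bar n_A$ and $f+\D$ respectively) towards their quasi-stationary values on the slow manifold, and Proposition~\ref{propD}, applied with a perturbation of size $O(n_{aA}^2)=o(1)$, pins $\S(t)$ to $\bar n_A$ uniformly over $[0,\t_\e^{hit}+K^{1/4-\a}]$; a parallel argument pins $n_{aa}$ to $\a^2\bar n_A$ after an initial transient of length $O(1/(f+\D))$. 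Second, $\D t$ must be chosen large enough that the drift dominates the per-step martingale noise but small enough that the deterministic Euler discretisation error per step remains negligible, and the range $\g\in(0,1/2)$ leaves enough slack to achieve both. Finally, the lower bound $\ln K\ll 1/(K\mu_K)$ in Assumption~\textbf{(C)} ensures that no mutation disturbs the three-genotype Markov dynamics during the $O(\ln K)$ invasion phase, so that the analysis of the three-dimensional process is legitimate.
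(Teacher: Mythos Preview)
Your treatment of part~(i) matches the paper's: a branching-process phase of length $O(\ln K)$ followed by a finite-time fluid-limit phase. (One quibble: the large-population approximation of Proposition~\ref{LPA} is stated for \emph{fixed} $T$, not $T=O(\ln K)$; the $\ln K$ comes entirely from the branching-process phase, while Phase~2 is bounded time.)

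For part~(ii) your approach is genuinely different from the paper's, and as written it has a real gap. The paper does \emph{not} discretise time. It defines stopping times $\tau_{aA}^{i-}$ at the geometric levels $n_{aA}=x^i\e$, with $x=\sqrt{(f+\vartheta)/(f+\D)}<1$, and on each level interval couples $\S$, $n_{aa}$, $n_{AA}$ to birth--death--immigration chains, using one-dimensional hitting-probability (equilibrium-potential) estimates to show that $\S$ and $n_{aa}$ stay within neighbourhoods of their quasi-equilibria whose width \emph{shrinks} with $i$ (of order $(x^{2i}\e^2)^{1+\a}$ and $x^{2i}\e^2$ respectively). It then shows that the time for $n_{aA}$ to drop from $x^i\e$ to $x^{i+1}\e$ is $\Theta(1/(x^i\e))$, and sums these to get $\Theta(K^{1/4-\a})$.

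Your time-discretised Euler scheme with a ``Gronwall-type induction'' does not close as stated. With step $K^{-\g}$ and $K^{1/4-\a+\g}$ steps, a Lipschitz constant $L=O(1)$ for the vector field gives a cumulative error factor $(1+LK^{-\g})^{K^{1/4-\a+\g}}\sim\exp(LK^{1/4-\a})$, which blows up for every $\a<1/4$; no choice of $\g\in(0,1/2)$ rescues this. You acknowledge the obstacle and propose to cure it via exponential relaxation of $\S$ and $n_{aa}$ plus Proposition~\ref{propD}. But Proposition~\ref{propD} is about $n_{aa}$ near $\bar n_a$ in Phase~1, not $\S$ near $\bar n_A$; more importantly, any large-deviations statement of that type only confines $\S$ to a \emph{fixed} $\d$-neighbourhood of $\bar n_A$, whereas the drift of $n_{aA}$ is
\[
\dot n_{aA}=-c(\S-\bar n_A)\,n_{aA}-\tfrac{f}{2\S}n_{aA}^2+\tfrac{2f}{\S}n_{aa}n_{AA},
\]
so a deviation $\S-\bar n_A$ of any fixed size $\d$ produces a \emph{linear} term $-c\d\,n_{aA}$ that dominates the quadratic term once $n_{aA}\ll\d$, destroying the $1/t$ decay and forcing exponential decay instead. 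To preserve the $1/t$ behaviour down to level $K^{-1/4+\a}$ you must show $|\S-\bar n_A|=O(n_{aA}^2)$ and $|n_{aa}-\g_\D n_{aA}^2|=o(n_{aA}^2)$ \emph{adaptively}, i.e.\ with bounds that shrink as $n_{aA}$ shrinks. That is exactly what the paper's eight-step iteration (Propositions~\ref{upSum}--\ref{timeSum}) accomplishes, and it is the missing ingredient in your scheme.
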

\begin{remark}
As long as there are $aA$-individuals in the population, the smaller $aa$-population does not die out, since the $aA$-
population always gives birth to $aa$-individuals. For smaller values of the power $\frac14-\a$ in (ii), the natural fluctuations 
of the big $AA$-population are too high: the death rate of $n_{aa}(t)$ can be too large due to the competition felt by 
$n_{AA}(t)$ and could induce the death of the $aa$-population and hence also of the $aA$-population.
\end{remark}
The next theorem states that if the mutation rate satisfies Assumption \textbf{C}, then  a 
new mutation to a (possibly fitter) allele, $B$, occurs while  some $a$-alleles are still alive. 
This mutation will happen after the 
invasion of the mutant population and when the mixed type $aA$-population already decreased to a small level again. 
More precisely,
\begin{theorem}
\label{main2}
Assume that Assumption \textbf{C} is satisfied.
Then, with probability converging to one,
\begin{align}
\label{result}
\t_\e^{hit}\land\t_1=\t_\e^{hit} \quad\text{and}\quad \t_1\land\t_{0}^{hit}=\t_1.
\end{align}
\end{theorem}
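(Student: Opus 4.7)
The plan is to deduce Theorem \ref{main2} from Theorem \ref{main1} by combining it with the time-scale separation imposed by Assumption \textbf{C}. Three time scales compete: the invasion time $\t_\e^{hit}=O(\ln K)$, the first mutation time $\t_1$ of typical order $1/(K\mu_K)$, and the survival time $\t_{sur}$ of order $K^{1/4-\a}$. Under Assumption \textbf{B} the total birth rate equals $fK\S(t)$, so the instantaneous rate at which mutations occur is $\mu_K fK\S(t)$; provided $\S(t)$ is bounded between two positive constants $\underline{\S}$ and $\overline{\S}$ throughout the relevant window, the mutation counting process is sandwiched between two homogeneous Poisson processes of rates of order $K\mu_K$, so $\t_1$ is stochastically comparable, from both sides, to an exponential variable of mean of order $1/(K\mu_K)$. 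All statements below are understood conditional on the fixation event $\{\t_\d^{mut}<\t_0^{mut}\}$ of Theorem \ref{fix}.

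For the first identity $\t_\e^{hit}\wedge\t_1=\t_\e^{hit}$, I would write
\begin{align*}
\P\bigl(\t_1<\t_\e^{hit}\bigr)\leq \P\bigl(\t_\e^{hit}>C\ln K\bigr)+\P\bigl(\t_1\leq C\ln K\bigr).
\end{align*}
Theorem \ref{main1}(i) makes the first term arbitrarily small for $C$ chosen large enough, while the second term is bounded above by $1-\exp(-C'K\mu_K\ln K)$, which tends to zero since Assumption \textbf{C} forces $K\mu_K\ln K\to 0$.

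For the second identity $\t_1\wedge\t_0^{hit}=\t_1$, Theorem \ref{main1}(ii) supplies a constant $c>0$ and an event of probability tending to one on which
\begin{align*}
\t_{K^{-1/4+\a}}^{hit}\ \geq\ \t_\e^{hit}+cK^{1/4-\a}.
\end{align*}
At time $t=\t_{K^{-1/4+\a}}^{hit}$ the density $n_{aA}(t)$ equals $K^{-1/4+\a}>0$, so $n_{aA}$ has not yet gone extinct and consequently $\t_0^{hit}\geq \t_{K^{-1/4+\a}}^{hit}$. By the strong Markov property at $\t_\e^{hit}$, the residual waiting time $\t_1-\t_\e^{hit}$ on $\{\t_1>\t_\e^{hit}\}$ is stochastically dominated by an exponential variable of rate of order $K\mu_K$, and therefore
\begin{align*}
\P\bigl(\t_1-\t_\e^{hit}>cK^{1/4-\a}\bigr)\leq \exp\bigl(-C''K\mu_K\cdot K^{1/4-\a}\bigr)\longrightarrow 0,
\end{align*}
since $(K\mu_K)^{-1}\ll K^{1/4-\a}$. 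Combining this with the first identity yields $\t_1<\t_0^{hit}$ with probability tending to one.

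The main technical point is the two-sided control of $\S(t)$ across the entire interval $[0,\t_0^{hit}]$, whose length diverges with $K$. The upper bound follows from standard tightness and the boundedness of the birth rates. The lower bound is more delicate: as long as $n_{aA}>0$, Mendelian reproduction continually produces $aa$ offspring so that $n_{aa}$ is replenished, and the Lotka--Volterra-type estimates underlying Theorem \ref{main1}(ii) keep $\S(t)$ close to the equilibrium $(f-D)/c$. Once $\S(t)$ is pinned in $[\underline{\S},\overline{\S}]$, the Poisson comparisons above are routine.
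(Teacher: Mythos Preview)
Your proposal is correct and follows essentially the same approach as the paper: both arguments pair Theorem \ref{main1}(i) with the left inequality of Assumption \textbf{C} for the first identity, and Theorem \ref{main1}(ii) with the right inequality for the second, the only cosmetic difference being that the paper invokes Champagnat's Lemma \ref{lemma2a} for the lower bound on $\t_1$ and computes the mutation rate via $n_{AA}\approx\bar n_A$ for the upper bound, whereas you do both comparisons through the total birth rate $fK\S(t)$ and a direct Poisson sandwich. The two-sided control of $\S(t)$ you flag as the main technical point is exactly what the paper establishes in Steps~1 and~3 (Propositions \ref{upSum} and \ref{lbSum}) of the Phase~3 iteration.
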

The interest in this result lies in the fact that  a new mutant allele $B$ that appears before the extinction of the $a$ allele that
has strong competition with the $AA$ population but weak competition with the $aa$ population may lead to a resurgence of the 
$aa$ population at the expense of the $AA$ population and coexistence of the types $aa$ and $BB$. Numerical simulations show that 
this can happen but requires subtle tuning of parameters. 
Since the proofs of  the main theorems (\ref{fix}, \ref{main1}, \ref{main2}) have several parts and are quite technical, we first give an outline of them before we turn to the details (Section \ref{proofmain}).
\subsection{Outline of the proofs}
\subsubsection{Heuristics leading to the main theorems} 
The basis of the main theorems is the observation of the different behaviour of the limiting deterministic system, when $A$ 
is the fittest and dominant one and when the alleles are co-dominant \citen{CMM13}. More precisely, they have dissimilar long-term behaviour (cf. Figure \ref{fig1} 
and \ref{fig5}).\\
By analysing the systems one gets that both have the same fixed points $\mathfrak n_{aa}\equiv(\bar n_a,0,0)$ and 
$\mathfrak n_{AA}\equiv(0,0,\bar n_A)$. The computation of the eigenvalues of the Jacobian matrix at the fixed point 
$\mathfrak n_{aa}$ yields in both models two negatives and one positive eigenvalues. Hence in both systems the fixed point 
$\mathfrak n_{aa}$ is unstable. In contrast, the eigenvalues at the fixed point $\mathfrak n_{AA}$ are all negative in the 
system of \citen{CMM13} whereas in this model there is one zero eigenvalue. This leads to the different long term 
behaviour towards the stable fixed point $\mathfrak n_{AA}$. In \citen{CMM13} model the $aA$-population dies out 
exponentially fast whereas in this model the degenerated eigenvalue corresponds to a decay of $n_{aA}(t)$ like a function 
$f(t)=\frac1t$. The goal is to show that the stochastic system behaves like the deterministic system.


\begin{figure}[h]
\begin{minipage}[t]{0.49\textwidth}
	\centering
  \includegraphics[width=1\textwidth]{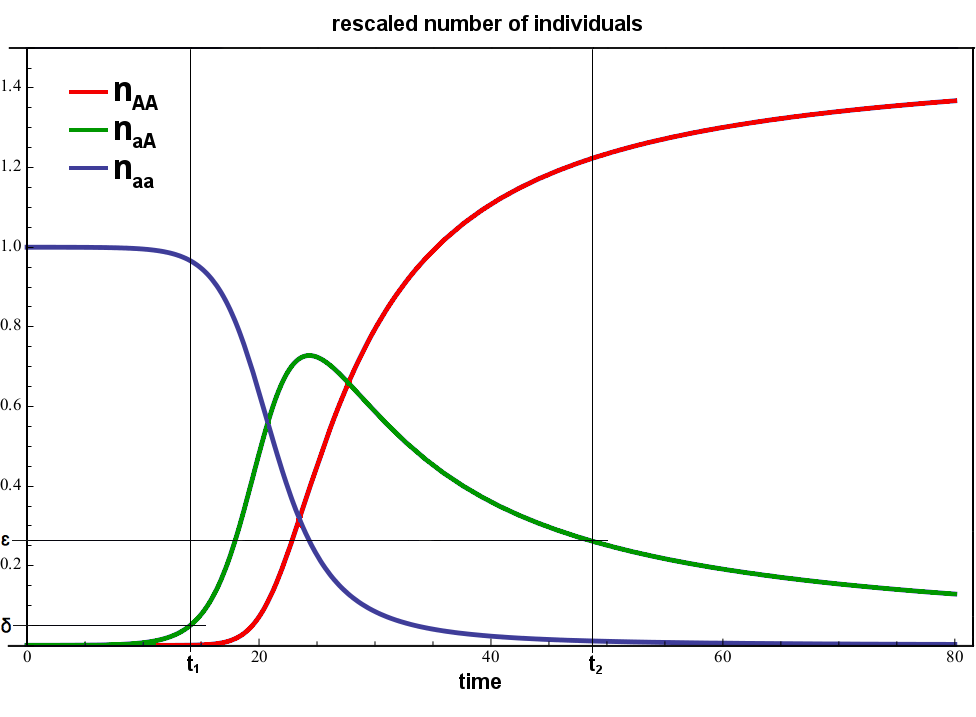}
	\caption{Our Model: $A$ fittest type and dominant}
	\label{fig1}
\end{minipage}
\begin{minipage}[t]{0.5\textwidth}
	\centering
  \includegraphics[width=1\textwidth]{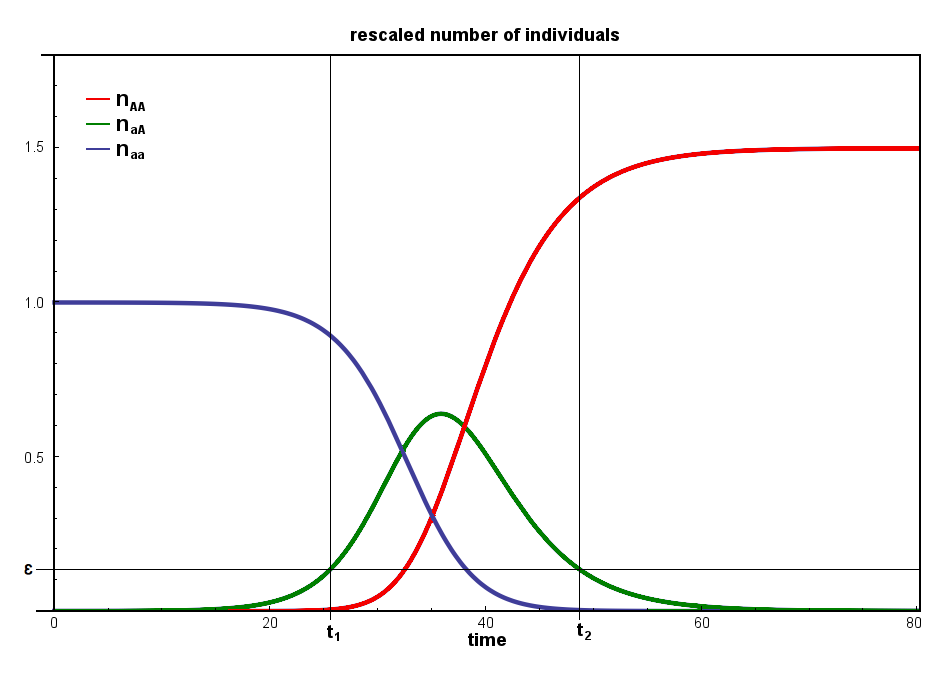}
	\caption{Collet et al. Model: $a$ and $A$ co-dominant}
	\label{fig5}
\end{minipage}
\end{figure}

\subsubsection{Organisation of the proofs}
The main theorems describe the invasion of a mutant in the resident population, and the survival of the recessive allele. 
This invasion can be divided into three phases, in a similar way as in \citen{C06}, \citen{CMM13}, or \citen{B14}
 (cf. Figure \ref{fig2}) (the general idea that an invasion can be divided in these phases is of course much older, see, e.g.  \citen{barton}):
\begin{figure}[t]
	\centering
  \includegraphics[width=0.8\textwidth]{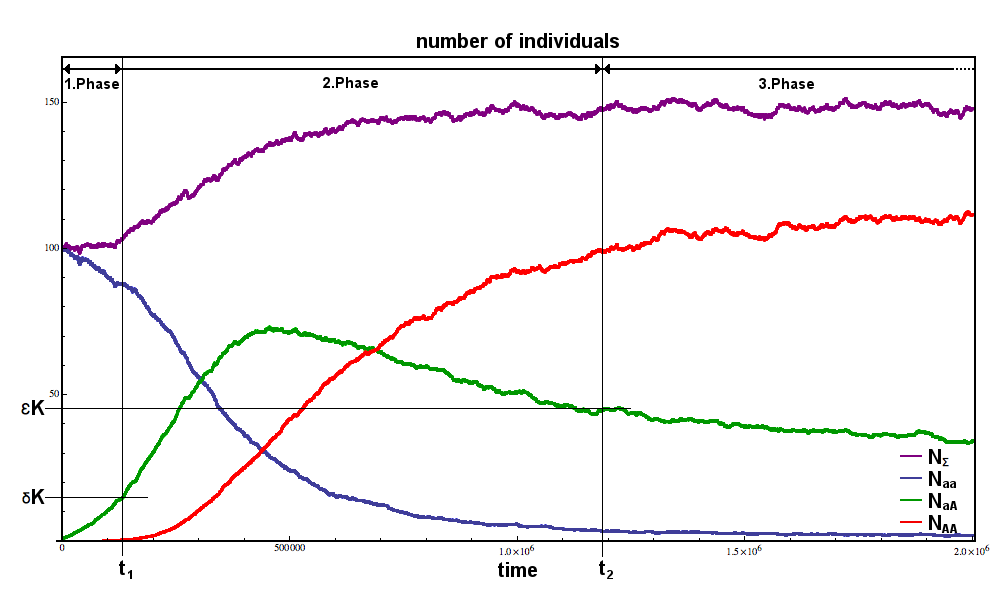}
	\caption{The three phases of the proof \tiny{(Simulation by Loren Coquille)}}
	\label{fig2}
\end{figure}\\
\begin{enumerate}
\item[\textbf{Phase 1:}]Fixation of the mutant population,
\item [\textbf{Phase 2:}]Invasion of the mutant population,
\item[\textbf{Phase 3:}]Survival of the recessive allele.
\end{enumerate}
\vspace{12pt}
The first two phases are similar to the ones in \citen{CMM13}, whereas the last phase will be analysed in ten steps.
Technically, the analysis uses tools developed in \citen{B14} and classical potential methods  (see p.e. \citen{BH15}).\\
\begin{enumerate}
\item[]\hspace{-1.22cm}\textbf{Settings for the steps}
\item[\textbf{Step 1:}]Upper bound on $\S(t)$,
\item[\textbf{Step 2:}]Upper bound on $n_{aa}(t)$,
\item[\textbf{Step 3:}]Lower bound on $\S(t)$,
\item[\textbf{Step 4:}]Upper and lower bound on $n_{AA}(t)$,
\item[\textbf{Step 5:}]Decay of $n_{aA}(t)$,
\item[\textbf{Step 6:}]Decay time of $n_{aA}(t)$,
\item[\textbf{Step 7:}]Decay and decay time of $n_{aa}(t)$,
\item[\textbf{Step 8:}]Growth and growth time of $\S(t)$,
\item[]\hspace{-1.22cm}\textbf{Total decay time of $n_{aA}(t)$}.
\end{enumerate}
\vspace{3pt} 

\paragraph{Phase 1: Fixation of the mutant population.} 
In the first phase we show that there is a positive probability that the fitter mutant population 
$A(t)\equiv n_{aA}(t)+2 n_{AA}(t)$ fixates in the resident population. More precisely, as long as the mutant population size 
is smaller than a fixed $\d$, the resident $aa$-population stays close to its equilibrium $\bar n_a$ (Proposition \ref{propD}) 
and its dynamics is nearly the same as before since the influence of the mutant population is negligible. We can 
approximate the dynamics of the mutant population $A(t)$ by a birth and death process and can show that the probability 
that this branching process increases to a $\d$-level is close to its survival probability and hence also the probability that 
the mutant population $A(t)\equiv n_{aA}(t)+2 n_{AA}(t)$ grows up to a size $\d$. This is the content of Theorem \ref{fix}.

\paragraph{Phase 2: Invasion of the mutant population.} 
The fixation (Phase 1) ends  with a macroscopic mutant population of size $\d$. In the second phase the mutant 
population invades the resident population and suppresses it. By the Large Population Approximation 
(Theorem \ref{LPA}, \citen{F_MA}) the behaviour of the process is now close to the solution of the deterministic
 system \eqref{det} with the same initial condition on any finite time interval,  when $K$ tends to infinity. Thus, we get from 
 the analysis of the dynamical system in Section \ref{pardet} that any solution starting in a $\d$-neighbourhood of
  $(\bar n_a,0,0)$ converges to an $\e$-neighbourhood of $(0,0,\bar n_A)$ in finite time ($t_2$ in Figure \ref{fig2}).  
Since we see in the dynamical system that as soon as the $AA$-population is close to its equilibrium, the $aA$-population 
decays like $\frac{1}{t}$, we only proceed until $n_{aA}$ decreases to an $\e$-level to ensure that the duration of this 
phase is still finite. 

In \citen{C06}, it is shown that the duration of the first phase is of order $\OO(\ln K)$ and that the time for the second 
phase is bounded. Thus the time needed by the $aA$-population to reach again the $\e$-level after the fixation is of order 
$\OO(\ln K)$ (cf. Theorem \ref{main1} (i)). From Proposition \ref{propD} we get that the resident $aa$-population stays in a 
$\d$-neighbourhood of its equilibrium $\bar n_a$ an exponentially long time $\exp(VK)$ as long as the mutant population is 
smaller than $\d$. Thus we can approximate the rate of mutation until this exit time by $\mu_KfK\bar n_a$. Hence the 
waiting time for mutation to occur is of order $\frac1{K\mu_K}$.
\citen{C06} proved that there is also no accumulation of mutations in the second phase. More precisely, he shows that, for any initial condition, 
the probability of a mutation on any bounded time interval is very small:
\begin{lemma}[Lemma 2 (a) in \citen{C06}]
\label{lemma2a}
Assume that the initial condition of $\nu_t$ satisfies \\$\sup_K\E(\langle\nu_0,\1\rangle)<\infty$. Then, for any $\eta>0$, 
there exists an $\e>0$ such that, for any $t>0$,
\begin{align}
\limsup_{K\rightarrow\infty}\P_{\nu_0}^K\left(\exists n\geq0:\frac t{K\mu_K}\leq\t_n\leq\frac{t+\e}{K\mu_K}\right)<\eta,
\end{align}
where $\t_n$ are the ordered sequence of times when a mutation occurs, defined in the beginning of Chapter 3.
\end{lemma}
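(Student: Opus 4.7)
\medskip

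\noindent\textbf{Proof plan for Lemma~\ref{lemma2a}.} The idea is to bound the total rate at which mutations can occur in the population by controlling the total mass $\langle\nu_t,\1\rangle$, and then to use a Markov / compensator argument on a window of length $\e/(K\mu_K)$. Concretely, a mutation fires in the population $\nu_t$ at instantaneous rate at most
\begin{align}
R_K(\nu_t)=\mu_K\int_{\UU^2}f_{u_1u_2}\,K\nu_t(d(u_1u_2))\leq \bar f\,\mu_K\, K\langle\nu_t,\1\rangle,
\end{align}
by Assumption \textbf{(A1)}. Hence, conditional on the trajectory, the number of mutations in the interval $[t/(K\mu_K),(t+\e)/(K\mu_K)]$ is stochastically dominated by a Poisson random variable with parameter $\bar f\int_{t/(K\mu_K)}^{(t+\e)/(K\mu_K)}K\mu_K\langle\nu_s,\1\rangle\,ds$.

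The first key step is a uniform-in-time moment bound on $\langle\nu_t,\1\rangle$. Writing $N_t^K=K\langle\nu_t,\1\rangle$, one compares $N_t^K$ to a logistic birth-and-death process: the up-jumps occur at rate at most $\bar f N_t^K$ (by \textbf{(A1)}) and down-jumps at rate at least $D_{u_1u_2}N_t^K+\underline c\,(N_t^K)^2/K\geq \underline c\,(N_t^K)^2/K$ (by \textbf{(A2)}). A standard Gronwall argument on $\E[N_t^K]$, using Jensen's inequality $\E[(N_t^K)^2]\geq (\E[N_t^K])^2$, yields
\begin{align}
\frac{1}{K}\E[N_t^K]\leq \max\!\left(\frac{\bar f}{\underline c},\,\frac{1}{K}\E[N_0^K]\right)=:C,
\end{align}
uniformly in $t\geq 0$ and in $K$, using $\sup_K\E\langle\nu_0,\1\rangle<\infty$.

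The second step combines these two ingredients. Let $\NN_K$ denote the counting process of mutations. By the previous step, its compensator satisfies
\begin{align}
\E\!\left[\NN_K\!\left(\tfrac{t+\e}{K\mu_K}\right)-\NN_K\!\left(\tfrac{t}{K\mu_K}\right)\right]\leq \bar f\int_{t/(K\mu_K)}^{(t+\e)/(K\mu_K)}K\mu_K\,\E\langle\nu_s,\1\rangle\,ds\leq \bar f\,C\,\e.
\end{align}
Markov's inequality then gives
\begin{align}
\P^K_{\nu_0}\!\left(\exists n\geq 0:\tfrac{t}{K\mu_K}\leq\t_n\leq\tfrac{t+\e}{K\mu_K}\right)\leq \bar f\, C\,\e,
\end{align}
uniformly in $K$ and in $t>0$. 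Choosing $\e<\eta/(\bar f\,C)$ yields the claim; the $\limsup$ in $K$ is then automatic since the bound is already $K$-uniform.

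\medskip
\noindent\textbf{Main obstacle.} The only non-routine point is the uniform-in-time moment bound on $\langle\nu_t,\1\rangle$, which is needed because the window $[t/(K\mu_K),(t+\e)/(K\mu_K)]$ sits far out on the time axis (of order $K^{1/4-\a}$ under Assumption~\textbf{(C)}), so a local Gronwall bound of the form $e^{Ct/(K\mu_K)}$ would be useless. The logistic comparison via \textbf{(A2)}, which gives a fixed equilibrium cap independent of time, is what makes the argument work and is precisely the reason Assumption \textbf{(A2)} was imposed in the model setup. All remaining steps (stochastic domination of the mutation counting process by its compensator, Markov's inequality) are standard.
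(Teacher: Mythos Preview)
The paper does not give its own proof of this lemma; it is quoted verbatim from \citen{C06} and used as a black box. Your sketch is correct and is essentially the argument one finds in \citen{C06}: bound the instantaneous mutation rate by $\bar f\,K\mu_K\langle\nu_t,\1\rangle$, control the expected total mass uniformly in time via the logistic drift coming from Assumption~\textbf{(A2)} (the comparison $\dot m\leq \bar f m-\underline c\,m^2$ with $m(t)=\E\langle\nu_t,\1\rangle$ and Jensen is exactly the standard step), and then apply Markov's inequality to the compensator of the mutation counting process on the window of length $\e/(K\mu_K)$. Your emphasis that a time-uniform bound is essential because the window sits at a diverging time is the right diagnosis.

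One small terminological quibble: the bound $m(t)\leq\max(\bar f/\underline c,\,m(0))$ is not really a ``Gronwall'' step but a comparison with the logistic ODE $\dot m=\bar f m-\underline c\,m^2$, whose flow is monotone and has $\bar f/\underline c$ as an attracting equilibrium; this is what gives the uniformity in $t$ that Gronwall alone would not. With that understood, the argument is complete.
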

Using Lemma \ref{lemma2a}, we get that, for fixed $\eta>0$, there exists a constant, $\eta>\rho>0$, such that, for sufficiently 
large $K$,
\begin{align}
\label{muttime}
\P_{\frac{z_K}{K}\d_{aa}+\frac{1}{K}\d_{aA}}\left(\t_1<\frac{\rho}{K\mu_K}\right)<\d,
\end{align}
where $\t_1$ is the time of the next mutation. Thus, the next mutation occurs with high probability not before a time 
$\frac{\rho}{K\mu_K}$.
Hence, under the assumption that 
\begin{align}
\ln(K)\ll\frac1{K\mu_K},
\end{align}
(cf. left inequality of \eqref{mu}) there appears no mutation before the first and second phase are completed.

\paragraph{Phase 3: Survival of the recessive allele.}
The last phase starts as soon as $n_{aA}(t)$ has decreased to an $\e$-level. This phase is different from the one in 
\citen{C06} and \citen{CMM13}, since the analysis of the deterministic system in Section \ref{pardet} reveals  that 
$n_{aA}(t)$ decreases only 
 like a function $f(t)=\frac{1}{t}$, in contrast to the exponential decay in \citen{CMM13}. Thus, we may expect that 
 the time to extinction will not be $\OO(\ln K)$ anymore, and the recessive allele $a$ will survive in the population
  for a much longer time. 
This is a situation similar to the one encountered in \citen{B14}, where it was necessary to show that the stochastic 
system remains close to a deterministic one over times of order $K^\a$ due to the fact that the evolutionary advantage 
of the mutant population vanishes like a negative power of $K$. 
Just as in that case, we  cannot use the Law of Large Numbers, but we adopt  the stochastic Euler scheme from \citen{B14}
 to show that the behaviour of the deterministic and the stochastic system remain close for a time of order $K^{1/4-\a}$.\\

Let us put this scheme on a mathematically footing:\\
\begin{figure}[b]
	\centering
  \includegraphics[width=1.0\textwidth]{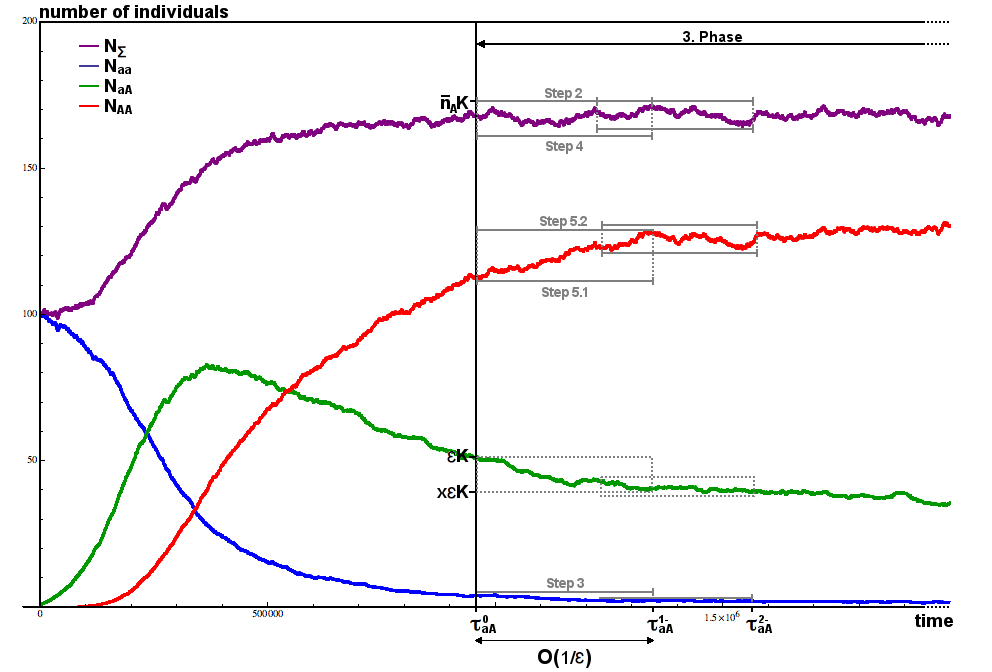}
	\caption{Steps of the proof \tiny{(Simulation by Loren Coquille)}}
	\label{fig4}
\end{figure}

\paragraph{Settings for the steps.} We define  stopping times depending on 
 $n_{aA}(t)$ in
 such a way that we can control the other processes $n_{aa}(t), n_{AA}(t)$, and $\S(t)$ on the resulting time intervals.
Fix $\e>0$ and $\vartheta>0$ such that $\e<\frac{\D}{2}<\vartheta<\D$. We set
\begin{equation}
\label{defx}
	x=\left(\frac{f+\vartheta}{f+\D}\right)^{\frac{1}{2}},
\end{equation}
and, for $0\leq i\leq\left\lfloor\frac{-\ln(\e K^{1/4-\a})}{\ln(x)}\right\rfloor$, with $\a>0$, we define the stopping times on $n_{aA}(t)$ by
\begin{align}
	&\tau_{aA}^{i+}\equiv\inf\left\{t\geq \tau_{aA}^{i-}: n_{aA}(t)\geq x^i\epsilon+x^{2i}\epsilon^2\right\},\label{stoppaA+}\\
	&\tau_{aA}^{i-}\equiv\inf\left\{t\geq \tau_{aA}^{(i-1)-}: n_{aA}(t)\leq x^i\epsilon\right\}.\label{stoppaA-}
\end{align}
During the time intervals $t\in \left[\t_{aA}^{i-}, \t_{aA}^{i+}\land\t_{aA}^{(i+1)-}\right]$, 
 $n_{aA}(t)\in\left[x^{i+1}\e,x^i\e+x^{2i}\e^2\right]$. 
The upper bound on $i$ is chosen in such a way that 
\begin{align}
\label{ubi}
x^i\e\geq K^{-1/4+\a}.
\end{align}
The following eight steps will be iterated from $i=0$ to $i=\left\lfloor\frac{-\ln(\e K^{1/4-\a})}{\ln(x)}\right\rfloor$.
\begin{remark}
Since in Phase 3 the biggest contribution to the birth rate of $n_{aa}(t)$ is given by the combination of two $aA$-individuals, 
$n_{aa}$ behaves like $n_{aA}^2$. We let $n_{aA}$ decrease only until $K^{-1/4+\a}$. Afterwards $n_{aa}$ would be of smaller order than $K^{-1/2}$ and the natural fluctuations of the big $AA$-population, of order $K^{-1/2}$, would induce the death of the $aa$-population due to competition. Since $n_{aa}$ contributes to the birth of the $aA$-population we also loose the control over this.
\end{remark}
\paragraph{Step 1: Upper bound on $\S(t)$.} We show that, on the time interval $t\in \left[\t_{aA}^{i-}, \t_{aA}^{i+}\land\t_{aA}^{(i+1)-}\land 
e^{VK^{\a}}\right]$, there exists a constant, $M_\S>0$, such that the sum process $\S(t)$
does not exceed the level $\bar n_A+3M_\S (x^{2i}\e^2)^{1+\a}$, with high probability: 
\begin{proposition}
\label{upSum}
For all $M>0$ and $0\leq i\leq\left\lfloor\frac{-\ln(\e K^{1/4-\a})}{\ln(x)}\right\rfloor$, let
	\begin{equation}
		\t_{\S,M}^{\a}\equiv\inf\left\{t>\t_{aA}^{i-}: \S(t)-\bar n_A\geq 3M(x^{2i}\e^2)^{1+\a}\right\}.
	\end{equation}
Then there exists a constant $M_{\S}>0$ such that
	\begin{equation}
		\P\left[\t_{\S,M_\S}^{\a}<\t_{aA}^{i+}\land\t_{aA}^{(i+1)-}\land e^{VK^{\a}}\right]=o(K^{-1}).
	\end{equation}
\end{proposition}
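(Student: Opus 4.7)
The plan is to exploit that the drift of $\S$ is the logistic drift $(f-D)\S-c\S^{2}$ minus a non-negative term $\D n_{aa}$, so that $\bar n_A=(f-D)/c$ is an attractive equilibrium. No a priori bound on $n_{aa}$ is required: the term $-\D n_{aa}\leq 0$ only makes the drift more restoring, so it can simply be dropped from any upper estimate. This is essential, because bounding $n_{aa}$ is postponed to Step 2.

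Writing $Y(t):=\S(t)-\bar n_A$, the semimartingale decomposition reads
\begin{equation*}
Y(t)=Y(\t_{aA}^{i-})-\int_{\t_{aA}^{i-}}^{t}\bigl[c\bar n_A Y(s)+cY(s)^{2}+\D n_{aa}(s)\bigr]\,ds+M^{\S}(t),
\end{equation*}
where I have used $f-D=c\bar n_A$, and $M^{\S}$ is a purely discontinuous martingale with jumps of size $1/K$ and $\langle M^{\S}\rangle_{t}\leq C(t-\t_{aA}^{i-})/K$ by (A1). On $\{Y\geq 0\}$ the drift is bounded above by $-c\bar n_A Y$, which is strictly restoring.

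Next I would adopt the stochastic Euler scheme of \citen{B14}. Partition $[\t_{aA}^{i-},\t_{aA}^{i-}+e^{VK^{\a}}]$ into subintervals $[t_{j},t_{j+1}]$ of fixed length $\D t$ chosen so that $e^{-c\bar n_A \D t}\leq 1/2$. On each subinterval, a pathwise comparison with the linearised ODE $\dot y=-c\bar n_A y$ yields, whenever $Y(t_{j})\geq 0$,
\begin{equation*}
Y(t_{j+1})\leq e^{-c\bar n_A \D t}Y(t_{j})+\sup_{s\in[t_j,t_{j+1}]}|M^{\S}(s)-M^{\S}(t_{j})|.
\end{equation*}
Iterating the geometric recursion and using Doob's maximal inequality for the oscillations inside each subinterval reduces the problem to bounding $\max_{j}\sup_{s\in[t_j,t_{j+1}]}|M^{\S}(s)-M^{\S}(t_j)|$. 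A standard exponential martingale (Bernstein-type) inequality yields $\P[\sup_{s\in[t_j,t_{j+1}]}|M^{\S}(s)-M^{\S}(t_j)|\geq\lambda]\leq 2\exp(-c_{2}K\lambda^{2})$. Taking $\lambda=M_{\S}(x^{2i}\e^{2})^{1+\a}$ and using $x^{i}\e\geq K^{-1/4+\a}$, one obtains $K\lambda^{2}\geq cM_{\S}^{2}K^{3\a+4\a^{2}}$, which dominates $VK^{\a}+2\ln K$ for any $\a>0$; a union bound over the $O(e^{VK^{\a}})$ subintervals then yields the required $o(K^{-1})$ estimate.

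The main technical obstacle is that the allowed excursion $(x^{2i}\e^{2})^{1+\a}$ is polynomially small in $K$ at the terminal index $i\sim\ln K$, so Proposition \ref{propD} (which only controls fluctuations at scale a fixed $\d$) is far from sharp; the surplus exponent $\a>0$ in $1+\a$ provides precisely the safety margin $K\lambda^{2}\gg K^{\a}$ needed. A secondary subtlety is that the initial value $Y(\t_{aA}^{i-})$, controlled inductively by the analogous estimate at step $i-1$, exceeds $3M_{\S}(x^{2i}\e^{2})^{1+\a}$ by a bounded factor $x^{-2(1+\a)}$; choosing $M_{\S}$ large enough, or waiting one constant relaxation time before invoking the bound, closes the induction.
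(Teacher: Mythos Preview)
Your approach is correct and genuinely different from the paper's, though both rest on the same key observation: the drift of $Y=\S-\bar n_A$ is bounded above by $-c\bar n_A Y$ regardless of $n_{aa}$, because $-\D n_{aa}\leq 0$ only helps. The implementations, however, diverge. The paper passes to the embedded jump chain $Y_n^{u\S}$ of the difference process, couples it to a nearest-neighbour random walk $Z_n^{u\S}$ with step probabilities $p_\S(k)=\tfrac12-C_0kK^{-1}$, and then uses the explicit equilibrium-potential (gambler's ruin) formula for one-dimensional chains to bound $\P_{zK}[T_{\a,M_\S}<T_0]$; it finishes by counting zero-excursions (showing $R\gg 1/A$ with $A\leq\exp(-\tilde C_0 M_\S^2 K^{3\a})$) and bounding the duration of each excursion via exponential waiting times. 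You instead stay in continuous time, linearise the drift, apply Gronwall interval-by-interval, and control the martingale increments by a Bernstein-type maximal inequality followed by a union bound over $O(e^{VK^{\a}})$ subintervals. Your route is more streamlined and avoids the discrete potential-theoretic machinery; the paper's route gives a more explicit handle on constants and is reused almost verbatim in Steps~2, 3 and~8. Both arrive at an excursion probability of order $\exp(-cK^{3\a+O(\a^2)})$, which comfortably beats the $e^{VK^{\a}}$ horizon.

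The initial-condition subtlety you flag---that $Y(\t_{aA}^{i-})$ inherited from step $i-1$ may exceed the target $3M_\S(x^{2i}\e^2)^{1+\a}$ by the fixed factor $x^{-2(1+\a)}$---is present in the paper as well: its hitting-probability bound $\exp(-2C_0K(2M_\S^2(x^{2i}\e^2)^{2(1+\a)}-z^2))$ is only useful for $z$ below the target. In both approaches this is harmless because the deterministic contraction damps the constant-factor overshoot in $O(1)$ time, negligible against the $O((x^i\e)^{-1})$ duration of the step. Your remark that ``choosing $M_\S$ large enough'' fixes this is not quite right (the constant cancels), but the alternative you give---absorbing one relaxation time---is.
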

To prove Proposition \ref{upSum}, we define the difference process between $\S(t)K$ and $\bar n_AK$  and couple it to
 a birth-death-immigration process. 
We show that this process jumps up with probability less than $\frac12$  and show that the probability that the process 
reaches the level $3M_\S (x^{2i}\e^2)^{1+\a}K$ before going to zero, is very 
small. Then we show that the process returns many times to zero until it reaches the level $3M_\S  (x^{2i}\e^2)^{1+\a}K$ and 
calculate the time for one such return.
\begin{remark}
This is only a coarse bound on the sum process $\S(t)$ but with our initial conditions we are not able to get a finer one. After Step 7 we have enough information to refine it but for the iteration this upper bound suffices.
\end{remark}

\paragraph{Step 2: Upper bound on $n_{aa}(t)$.} An upper bound on $n_{aa}(t)$ is obtained  similarly 
as in Step 1.
Let
\begin{align}
\label{gammadelta}
\g_\D\equiv\frac{f+\frac\D2}{4\bar n_A(f+\D)}.
\end{align}
We show that, on the time interval $t\in \left[\t_{aA}^{i-}, \t_{aA}^{i+}\land\t_{aA}^{(i+1)-}\land e^{VK^{\a}}\right]$, 
there exists a constant, $M_{aa}>0$, such that the $aa$-population does 
not exceed the level $\g_\D x^{2i}\e^2+3M_{aa} (x^{2i}\e^2)^{1+\a}$, with high probability:
\begin{proposition}
\label{ubaa}
For all $M>0$ and $0\leq i\leq\left\lfloor\frac{-\ln(\e K^{1/4-\a})}{\ln(x)}\right\rfloor$, let
	\begin{equation}
		\t_{aa,M}^{\a}\equiv\inf\left\{t>\t_{aA}^{i-}: n_{aa}(t)-\g_\D x^{2i}\e^2\geq 3M(x^{2i}\e^2)^{1+\a}\right\}.
	\end{equation}
Then there exists a constant, $M_{aa}>0$, such that
	\begin{equation}
		\P\left[\t_{aa,M_{aa}}^{\a}<\t_{aA}^{i+}\land\t_{aA}^{(i+1)-}\land e^{VK^{\a}}\right]=o(K^{-1}).
	\end{equation}
\end{proposition}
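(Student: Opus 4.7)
The plan is to adapt the strategy of Proposition~\ref{upSum}, replacing $\S(t)$ by $n_{aa}(t)$ and using Step~1 as an a~priori control on the total population. I would first restrict to the event on which the conclusion of Proposition~\ref{upSum} holds, incurring only an additive $o(K^{-1})$ error, so that $\S(t)\leq \bar n_A+3M_\S(x^{2i}\e^2)^{1+\a}$ throughout the window $[\t_{aA}^{i-},\t_{aA}^{i+}\land\t_{aA}^{(i+1)-}\land e^{VK^\a}]$, and similarly invoke the induction hypothesis from iterations $i'<i$ to secure a matching lower bound $\S(t)\geq \bar n_A-\kappa (x^{2(i-1)}\e^2)^{1+\a}$.

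Next I would perform a birth-versus-death comparison at the candidate threshold $n_{aa}=\g_\D x^{2i}\e^2$. Since $\bar n_A=(f-D)/c$ implies $D+\D+c\bar n_A=f+\D$, the pseudo-equilibrium for $n_{aa}$ induced by the drift
\[
\frac{f(n_{aa}+n_{aA}/2)^2}{\S}-n_{aa}(D+\D+c\S),
\]
under $n_{aA}(t)\leq x^i\e+x^{2i}\e^2$, is approximately $\frac{f\,n_{aA}^2}{4\bar n_A(f+\D)}\leq \frac{f}{4\bar n_A(f+\D)}\,x^{2i}\e^2\,(1+O(\e))$. The choice $\g_\D=(f+\D/2)/(4\bar n_A(f+\D))$ is calibrated so as to exceed this pseudo-equilibrium by a margin of order $\D/(8\bar n_A(f+\D))$, so for $\e$ small enough (depending only on $f$ and $\D$), the drift of $n_{aa}$ is strictly negative whenever $n_{aa}\geq \g_\D x^{2i}\e^2$, with a downward force of order $\D x^{2i}\e^2 K$.

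I would then couple the excess process $(N_{aa}(t)-\lceil \g_\D x^{2i}\e^2 K\rceil)^+$, while it is positive, with a biased random walk whose upward-jump probability is at most $\tfrac12-\kappa'$ for a fixed $\kappa'>0$. Classical gambler's ruin bounds the probability that a single excursion reaches height $3M_{aa}(x^{2i}\e^2)^{1+\a}K$ before returning to $0$ by $\exp(-cM_{aa}(x^{2i}\e^2)^{1+\a}K)$; since $x^{2i}\e^2\geq K^{-1/2+2\a}$, this is exponentially small in a positive power of $K$. A union bound over the $O(Ke^{VK^\a})$ excursions during the window then yields the claimed $o(K^{-1})$ estimate, provided $M_{aa}$ is chosen sufficiently large.

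The main obstacle is the order of the steps in the outline: running the drift comparison cleanly requires a two-sided control on $\S$, yet the lower bound (Step~3) is only obtained after Step~2. I would resolve this by inducting jointly over Steps~1--4 and using the outputs of iteration $i-1$ to seed the drift bound at iteration $i$, combined with a short-time fluctuation estimate ensuring that $n_{AA}$, and hence $\S$, cannot drop from its initial value by more than $O((x^{2(i-1)}\e^2)^{1+\a})$ on the current interval. Keeping these nested error terms compatible with the target $(x^{2i}\e^2)^{1+\a}$ uniformly in $i$ is where the bookkeeping becomes most delicate.
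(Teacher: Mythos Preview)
Your proposal is correct and follows essentially the same approach as the paper. Both arguments couple the excess $N_{aa}(t)-\lceil\g_\D x^{2i}\e^2 K\rceil$ to a biased random walk with constant downward bias $\tfrac12-C_0$, obtain this bias by plugging in the a~priori controls on $\S$ and $n_{AA}$ (Step~1 for the upper bound, the $(i-1)$-th iteration of Steps~3--4 for the lower bound; the paper is explicit that ``for the $i^{th}$ iteration-step we use the finer bounds \dots\ estimated in the $(i-1)^{th}$ iteration-step before''), and then use a gambler's-ruin estimate for a single excursion to reach height $3M_{aa}(x^{2i}\e^2)^{1+\a}K$. Your identification of the circularity between Steps~2 and~3 and its resolution by induction over $i$ is exactly what the paper does.

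The one cosmetic difference is in the time-window accounting: you do a direct union bound over the $O(Ke^{VK^\a})$ possible excursions, whereas the paper counts the number $R$ of returns to zero, shows $\P[R\le N]=o(K^{-1})$ for $N\sim 1/(K^2A)$ with $A$ the single-excursion escape probability, and then argues that $R>N$ forces the hitting time to exceed $e^{VK^\a}$. Both routes give the required $o(K^{-1})$ since the single-excursion probability is $\exp(-cK^{1/2+\dots})$, which dominates the $e^{VK^\a}$ factor.
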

The proof is similar to the one in Step 1. We define the difference process between $n_{aa}(t)K$ and 
$\g_\D x^{2i}\e^2K$ and couple it to a birth-death-immigration process. We show that this branching process jumps up with probability less than $\frac12$. 
Again,  we show that the probability that the process reaches the level $3M_{aa}(x^{2i}\e^2)^{1+\a}K$ before going to zero, is very small. Then we show that the process returns many times to zero until it reaches the level $3M_{aa} (x^{2i}\e^2)^{1+\a}K$ and calculate the time for one such return.

\paragraph{Step 3: Lower bound on $\S(t)$.} With the results  from the previous steps, we can bound $\S(t)$ from below. 
We show that on the time interval $t\in \left[\t_{aA}^{i-}, \t_{aA}^{i+}\land\t_{aA}^{(i+1)-}\land e^{VK^{\a}}\right]$ the sum 
process does not drop below $\bar n_A-\frac{\D+\vartheta}{c\bar n_A}\g_\D x^{2i}\e^2-3M_\S(x^{2i}\e^2)^{1+\a}$, with high 
probability:
\begin{proposition}
\label{lbSum}
For all $M>0$ and $0\leq i\leq\left\lfloor\frac{-\ln(\e K^{1/4-\a})}{\ln(x)}\right\rfloor$, let
	\begin{equation}
		\t_{\S,M}^{\a}\equiv\inf\left\{t>\t_{aA}^{i-}: \S(t)-\left(\bar n_A-\frac{\D+\vartheta}{c\bar n_A}\g_\D x^{2i}\e^2 \right)\leq -3M(x^{2i}\e^2)^{1+\a}\right\}.
	\end{equation}
Then there exists a constant, $M_{\S}>0$, such that
	\begin{equation}
		\P\left[\t_{\S,M_{\S}}^{\a}<\t_{aA}^{i+}\land\t_{aA}^{(i+1)-}\land e^{VK^{\a}}\right]=o(K^{-1}).
	\end{equation}
\end{proposition}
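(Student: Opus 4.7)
The strategy follows the same template as the proofs of Propositions \ref{upSum} and \ref{ubaa}. Set $L_i \equiv \bar n_A - \frac{\D+\vartheta}{c\bar n_A}\g_\D x^{2i}\e^2$ and introduce the non-negative integer-valued deficit
\begin{equation}
Z(t) \;\equiv\; \lceil K(L_i - \S(t))\rceil \vee 0,
\end{equation}
so that the conclusion of Proposition \ref{lbSum} is equivalent to $Z(t) < \lceil 3M_\S(x^{2i}\e^2)^{1+\a}K\rceil$ throughout $[\t_{aA}^{i-},\t_{aA}^{i+}\land\t_{aA}^{(i+1)-}\land e^{VK^{\a}}]$. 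The plan is to work on the intersection of the favourable events of Propositions \ref{upSum} and \ref{ubaa}, whose complement has probability $o(K^{-1})$; on this event, whenever $Z(t)>0$ one has both $n_{aa}(t)\le \g_\D x^{2i}\e^2+3M_{aa}(x^{2i}\e^2)^{1+\a}$ and $\S(t)$ confined to a small window around $\bar n_A$.

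Each downward jump of $\S$ (of size $1/K$) increments $Z$ by one and each upward jump decrements it when $Z>0$. A direct computation of the jump rates from the generator $L^K$ yields
\begin{equation}
\frac{1}{K}(b_\S - d_\S) \;=\; c\,\S(\bar n_A - \S) - \D\, n_{aa}.
\end{equation}
Evaluating at $\S\le L_i$, inserting the definition of $L_i$, and plugging in the upper bound on $n_{aa}$ from Proposition \ref{ubaa} gives
\begin{equation}
\frac{1}{K}(b_\S - d_\S) \;\geq\; \vartheta\,\g_\D\,x^{2i}\e^2 - C(x^{2i}\e^2)^{1+\a} \;\geq\; \tfrac{\vartheta}{2}\g_\D\, x^{2i}\e^2,
\end{equation}
for a constant $C$ depending only on $\D, M_{aa}, c, \bar n_A$ and for all $\e$ small enough. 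Consequently $Z$ is stochastically dominated by a subcritical birth-death chain with immigration whose up-to-down rate ratio $q_i$ satisfies $1-q_i\ge c_0\, x^{2i}\e^2$ for some $c_0>0$.

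The tail estimate now proceeds as in Step 1. A single excursion of the dominating chain away from $0$ reaches height $h_i\equiv \lceil 3M_\S(x^{2i}\e^2)^{1+\a}K\rceil$ with probability at most $q_i^{h_i}$. The constraint $x^i\e\ge K^{-1/4+\a}$ implies $x^{2i}\e^2\ge K^{-1/2+2\a}$, whence $h_i\ge 3M_\S K^{1/2+3\a/2+2\a^2}$ and $1-q_i\ge c_0 K^{-1/2+2\a}$, so that
\begin{equation}
q_i^{h_i}\;\leq\;\exp\!\lb -3c_0 M_\S K^{7\a/2+2\a^2}\rb,
\end{equation}
which is super-polynomially small in $K$. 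Each excursion has mean duration $\OO(K^{-1}(x^{2i}\e^2)^{-1})$, so the expected number of excursions on the time interval is at most $K^{1/2+2\a}e^{VK^\a}$, and a union bound, with $M_\S$ chosen sufficiently large, produces the required $o(K^{-1})$ estimate.

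The one genuinely delicate point is the initial condition: $Z(\t_{aA}^{i-})$ must start below the target level, which follows inductively from the corresponding bound at step $i-1$ together with $L_{i-1}\ge L_i$. This forces Propositions \ref{upSum}, \ref{ubaa}, and \ref{lbSum} to be run inside a single joint induction on $i$, with a common constant $M$ dominating both $M_\S$ and $M_{aa}$ in every bound — the same device already used in \citen{B14}.
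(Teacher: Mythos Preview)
Your proposal is correct and follows essentially the same route as the paper: define the (signed) gap between $\S(t)$ and the threshold $L_i$, use the upper bound on $n_{aa}$ from Proposition~\ref{ubaa} to show that the gap process has a downward drift of order $x^{2i}\e^2$ (equivalently, that the jump-down probability is at most $\tfrac12-C_0x^{2i}\e^2$, which is exactly the paper's lemma in Step~3), dominate by a biased nearest-neighbour chain, and finish with the excursion/return argument of Step~1. Your remark that the initial condition forces a joint induction on $i$ with a common constant is also what the paper does implicitly through its iteration scheme. One small point: your sentence ``the expected number of excursions on the time interval is at most $K^{1/2+2\a}e^{VK^{\a}}$'' is heuristic (dividing the horizon by the mean excursion length does not bound the number of excursions, and a union bound needs a tail estimate, not an expectation); the clean way---as in the paper's Step~1---is to bound the number of excursions by the total number of jumps, $O(Ke^{VK^{\a}})$, which is still annihilated by $q_i^{h_i}\le\exp(-cK^{7\a/2+2\a^2})$.
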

The proof is similar to those in Step 1 and 2.

\paragraph{Step 4: Lower and upper bound on $n_{AA}(t)$.} Since we now have bounded the processes 
$n_{aa}(t), n_{aA}(t)$ and $\S(t)$ from above and below (for $n_{aa}(t)$ it suffices to set the lower bound to zero), it is easy to get a lower and an upper bound on 
$n_{AA}(t)$ on the time interval $t\in \left[\t_{aA}^{i-}, \t_{aA}^{i+}\land\t_{aA}^{(i+1)-}\land e^{VK^{\a}}\right]$. 
There exists a constant, $M_{AA}>0$, such that with high probability $n_{AA}(t)$ does not drop below $\bar n_{A}-x^i\e-M_{AA}x^{2i}\e^2$
 (Proposition \ref{lbAA}), and does not exceed the level $\bar n_{A}-x^{(i+1)}\e-M_{AA}(x^{2i}\e^2)^{1+\a}$
  (Proposition \ref{ubAA}):
\begin{proposition}
\label{lbAA}
For all $M>0$ and $0\leq i\leq\left\lfloor\frac{-\ln(\e K^{1/4-\a})}{\ln(x)}\right\rfloor$, let
	\begin{equation}
		\t_{AA,M}^{2i}\equiv\inf\left\{t>\t_{aA}^{i-}: n_{AA}(t)-\left(\bar n_A-x^i\e\right)\leq -Mx^{2i}\e^2\right\}.
	\end{equation}
Then there exists a constant $M_{AA}>0$ such that
	\begin{equation}
		\P\left[\t_{AA,M_{AA}}^{2i}<\t_{aA}^{i+}\land\t_{aA}^{(i+1)-}\land e^{VK^{\a}}\right]=o(K^{-1}).
	\end{equation}
\end{proposition}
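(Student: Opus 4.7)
The plan is to derive the lower bound on $n_{AA}(t)$ directly from the three preceding propositions (Steps 1--3) by exploiting the algebraic identity
\begin{equation}
n_{AA}(t)=\S(t)-n_{aA}(t)-n_{aa}(t),
\end{equation}
rather than constructing a new branching-process coupling. This is the reason why Step 4 is stated without independent dynamical analysis: once $\S$ is pinched between upper and lower bounds and $n_{aa}$ is controlled from above, the lower bound on $n_{AA}$ follows from the trivial upper bound on $n_{aA}$ given by the very definition of the stopping times $\t_{aA}^{i\pm}$.

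First I would introduce the good event
\begin{equation}
G_i\equiv\left\{\t^{\a}_{\S,M_\S}\wedge\t^{\a}_{aa,M_{aa}}\ \geq\ \t_{aA}^{i+}\wedge\t_{aA}^{(i+1)-}\wedge e^{VK^{\a}}\right\},
\end{equation}
where $\t^{\a}_{\S,M_\S}$ is the lower-bound stopping time for $\S$ from Proposition \ref{lbSum} and $\t^{\a}_{aa,M_{aa}}$ is the upper-bound stopping time for $n_{aa}$ from Proposition \ref{ubaa}. A union bound combined with these two results yields $\P[G_i^c]=o(K^{-1})$. Second, on $G_i$, for every $t\in[\t_{aA}^{i-},\t_{aA}^{i+}\wedge\t_{aA}^{(i+1)-}\wedge e^{VK^{\a}}]$ we have simultaneously
\begin{equation}
\S(t)\ \geq\ \bar n_A-\frac{\D+\vartheta}{c\bar n_A}\g_\D x^{2i}\e^2-3M_{\S}(x^{2i}\e^2)^{1+\a},\qquad n_{aa}(t)\ \leq\ \g_\D x^{2i}\e^2+3M_{aa}(x^{2i}\e^2)^{1+\a},
\end{equation}
and by the definition \eqref{stoppaA+} of $\t_{aA}^{i+}$ also $n_{aA}(t)\leq x^i\e+x^{2i}\e^2$.

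Third, inserting these three inequalities into the identity above gives
\begin{equation}
n_{AA}(t)-(\bar n_A-x^i\e)\ \geq\ -\left[1+\g_\D+\frac{\D+\vartheta}{c\bar n_A}\g_\D\right]x^{2i}\e^2-3(M_\S+M_{aa})(x^{2i}\e^2)^{1+\a}.
\end{equation}
Since $x^{2i}\e^2\leq\d^2\ll 1$ on the whole range of $i$, the second term is dominated by the first, so choosing
\begin{equation}
M_{AA}\ :=\ 1+\g_\D+\frac{\D+\vartheta}{c\bar n_A}\g_\D+1
\end{equation}
(or any strictly larger constant) ensures $n_{AA}(t)\geq\bar n_A-x^i\e-M_{AA}x^{2i}\e^2$ on $G_i$, which is exactly the statement that $\t_{AA,M_{AA}}^{2i}\geq\t_{aA}^{i+}\wedge\t_{aA}^{(i+1)-}\wedge e^{VK^{\a}}$. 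Combined with $\P[G_i^c]=o(K^{-1})$, this gives the desired probability bound.

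The main ``obstacle'' here is not analytic but notational: one must verify carefully that the constant $M_{AA}$ chosen this way is independent of $i$ and of $K$, which is automatic because $\g_\D$, $\vartheta$, $\D$, $c$ and $\bar n_A$ are all fixed model parameters, and because $(x^{2i}\e^2)^{\a}\leq 1$ holds uniformly on the range $0\leq i\leq\lfloor-\ln(\e K^{1/4-\a})/\ln(x)\rfloor$. No coupling to a birth--death--immigration process is required at this step; all the stochastic work has already been done in Propositions \ref{upSum}, \ref{ubaa} and \ref{lbSum}.
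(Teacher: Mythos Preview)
Your proposal is correct and follows essentially the same route as the paper: the paper also derives the lower bound on $n_{AA}$ purely algebraically from the identity $n_{AA}(t)=\S(t)-n_{aA}(t)-n_{aa}(t)$, inserting the lower bound on $\S$ from Step~3, the upper bound on $n_{aa}$ from Step~2, and the upper bound $n_{aA}(t)\leq x^i\e+x^{2i}\e^2$ from the definition of the stopping times. Your version is in fact slightly more explicit than the paper's, which simply writes the resulting error as $\mathcal{O}(x^{2i}\e^2)$ without isolating the constant $M_{AA}$ or formalising the good event $G_i$.
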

\begin{proposition}
\label{ubAA}
For all $M>0$ and $0\leq i\leq\left\lfloor\frac{-\ln(\e K^{1/4-\a})}{\ln(x)}\right\rfloor$, let
	\begin{equation}
		\t_{AA,M}^{\a}\equiv\inf\left\{t>\t_{aA}^{i-}: n_{AA}(t)-\left(\bar n_A-x^{i+1}\e\right)\geq M(x^{2i}\e^2)^{1+\a}\right\}.
	\end{equation}
Then there exists a constant, $M_{AA}>0$, such that
	\begin{equation}
		\P\left[\t_{AA,M_{AA}}^{\a}<\t_{aA}^{i+}\land\t_{aA}^{(i+1)-}\land e^{VK^{\a}}\right]=o(K^{-1}).
	\end{equation}
\end{proposition}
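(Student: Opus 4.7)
The plan is to deduce Proposition \ref{ubAA} as an essentially immediate corollary of Proposition \ref{upSum} (Step 1). Unlike Steps 1--3, no new coupling with a birth--death--immigration process is required: the upper bound on $n_{AA}(t)$ is forced by the identity
\be
n_{AA}(t)=\S(t)-n_{aA}(t)-n_{aa}(t)
\ee
together with the lower bound on $n_{aA}(t)$ that is automatic on the relevant interval by the very definition \eqref{stoppaA-} of the stopping times.

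Concretely, on $t\in\left[\t_{aA}^{i-},\t_{aA}^{i+}\land\t_{aA}^{(i+1)-}\land e^{VK^{\a}}\right)$ the fact that $\t_{aA}^{(i+1)-}$ has not yet occurred forces $n_{aA}(t)\geq x^{i+1}\e$, and using $n_{aa}(t)\geq 0$ one obtains
\be
n_{AA}(t)\leq\S(t)-x^{i+1}\e.
\ee
On the event $\mathcal G\equiv\left\{\t_{\S,M_\S}^{\a}\geq\t_{aA}^{i+}\land\t_{aA}^{(i+1)-}\land e^{VK^{\a}}\right\}$, which by Proposition \ref{upSum} has probability $1-o(K^{-1})$, one has $\S(t)\leq\bar n_A+3M_\S(x^{2i}\e^2)^{1+\a}$ throughout the same interval, so substituting above yields
\be
n_{AA}(t)-\left(\bar n_A-x^{i+1}\e\right)\leq 3M_\S(x^{2i}\e^2)^{1+\a}.
\ee

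Choosing $M_{AA}\equiv 3M_\S$ one reads off the inclusion
\be
\left\{\t_{AA,M_{AA}}^{\a}<\t_{aA}^{i+}\land\t_{aA}^{(i+1)-}\land e^{VK^{\a}}\right\}\subseteq\left\{\t_{\S,M_\S}^{\a}<\t_{aA}^{i+}\land\t_{aA}^{(i+1)-}\land e^{VK^{\a}}\right\},
\ee
and Proposition \ref{upSum} supplies the $o(K^{-1})$ bound on the right-hand side. The only point that requires a word of care is the lattice overshoot at jump times (the process lives on $K^{-1}\Z_+$ and can exceed a threshold by at most $K^{-1}$ at a jump); since $(x^{2i}\e^2)^{1+\a}K\to\infty$ in the range $0\leq i\leq\lfloor-\ln(\e K^{1/4-\a})/\ln(x)\rfloor$, this is absorbed by a harmless enlargement of $M_{AA}$. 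I do not foresee any genuine obstacle in executing this plan; the entire content of the proposition is already packed into Step 1.
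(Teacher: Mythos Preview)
Your proposal is correct and matches the paper's own argument essentially line for line: the paper's Step 4.2 also writes $n_{AA}(t)=\S(t)-n_{aA}(t)-n_{aa}(t)$, invokes the upper bound on $\S(t)$ from Step 1 and the lower bound $n_{aA}(t)\geq x^{i+1}\e$ from the stopping-time definition, and reads off $n_{AA}(t)\leq\bar n_A-x^{i+1}\e+\mathcal{O}((x^{2i}\e^2)^{1+\a})$. Your remark on the lattice overshoot is a harmless extra precaution not spelled out in the paper.
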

\paragraph{Step 5: Decay of $n_{aA}(t)$.} We now have upper and lower bounds for all the single processes, 
for $t\in \left[\t_{aA}^{i-}, \t_{aA}^{i+}\land\t_{aA}^{(i+1)-}\land e^{VK^{\a}}\right]$. Using these bounds, we 
 prove that $n_{aA}(t)$ has the tendency to decrease on a given time interval. We show that, with high probability, 
 $n_{aA}(t)$, restarted at $x^i\e$ (i.e. we set $\tau_{aA}^{i-}=0$), hits the level $x^{i+1}\e$ before it reaches the level $x^i\e+x^{2i}\e^2$.
\begin{proposition}
\label{decayaA}
	There exists a constant $C>0$ such that, forall $0\leq i\leq\left\lfloor\frac{-\ln(\e K^{1/4-\a})}{\ln(x)}\right\rfloor$
		\begin{align}
			\P\left[\t_{aA}^{i+}<\t_{aA}^{(i+1)-}\big|n_{aA}(0)=x^i\e\right]\leq K^{1/4-\a}\exp\left(-CK^{1/4+3\a}\right).
		\end{align}
\end{proposition}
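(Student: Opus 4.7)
The plan is to use the a priori bounds from Steps~1--4 to establish a strictly negative drift for $n_{aA}(t)$ on the interval $[\tau_{aA}^{i-},\tau_{aA}^{i+}\wedge\tau_{aA}^{(i+1)-}\wedge e^{VK^\alpha}]$, and then apply a gambler's-ruin (potential-theoretic) argument within the stochastic Euler scheme of \citen{B14} to bound the up-crossing probability. First I would restart time at $\tau_{aA}^{i-}$, so $n_{aA}(0)=x^i\epsilon$, and work on the intersection $\mathcal{G}_i$ of the events appearing in Propositions~\ref{upSum}--\ref{ubAA}; their complements have probability $o(K^{-1})$, which is absorbed in the final bound.

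On $\mathcal{G}_i$ the per-capita birth rate of an $aA$-individual equals $2f(n_{aa}+n_{aA}/2)(n_{AA}+n_{aA}/2)/(n_{aA}\Sigma)$, while the per-capita death rate equals $D+c\Sigma$. Substituting $\Sigma=\bar n_A+O(x^{2i}\epsilon^2)$, $n_{AA}=\bar n_A-x^i\epsilon+O(x^{2i}\epsilon^2)$, $n_{aa}\le\gamma_\Delta x^{2i}\epsilon^2(1+o(1))$, and using the identity $c\bar n_A=f-D$, the constant $f\cdot n_{aA}$ pieces in birth and death cancel exactly, and the net per-capita drift becomes $-\kappa\,x^i\epsilon+O(x^{2i}\epsilon^2)$ with some $\kappa>0$. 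Positivity of $\kappa$ relies essentially on the algebraic form \eqref{gammadelta} of $\gamma_\Delta$ and on the choice $x^2=(f+\vartheta)/(f+\Delta)$ with $\vartheta>\Delta/2$; for $x^i\epsilon\ge K^{-1/4+\alpha}$ the quadratic remainder is negligible compared with $\kappa x^i\epsilon$. I then couple $N_{aA}(t)=K n_{aA}(t)$ to a nearest-neighbour birth-death chain on $\mathbb{Z}$ whose down-drift satisfies $q-p\ge\kappa'\,x^i\epsilon$ throughout. A classical gambler's-ruin estimate between absorbing barriers at $Kx^{i+1}\epsilon$ and $K(x^i\epsilon+x^{2i}\epsilon^2)$, starting from $Kx^i\epsilon$, gives for a single excursion
\[
\P\bigl[\,N_{aA}\text{ reaches }K(x^i\epsilon+x^{2i}\epsilon^2)\text{ first}\,\bigr]\ \le\ \bigl(1+\kappa'\,x^i\epsilon\bigr)^{-Kx^{2i}\epsilon^2}\ \le\ \exp\bigl(-CK^{1/4+3\alpha}\bigr),
\]
using $x^i\epsilon\ge K^{-1/4+\alpha}$. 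Finally, slicing the interval $[\tau_{aA}^{i-},\tau_{aA}^{(i+1)-}\wedge\tau_{aA}^{i+}]$ into $O(K^{1/4-\alpha})$ unit-length Euler sub-intervals (reflecting the deterministic $1/t$-decay time of $n_{aA}$ from $x^i\epsilon$ to $x^{i+1}\epsilon$) and applying a union bound produces the prefactor $K^{1/4-\alpha}$.

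The main obstacle is the drift computation: one must check that the second-order errors of size $(x^{2i}\epsilon^2)^{1+\alpha}$ coming from Propositions~\ref{upSum}--\ref{ubAA}, together with the quadratic remainder in the Taylor expansion of the birth rate, are all strictly dominated by the leading term $\kappa x^i\epsilon$ throughout the range $x^i\epsilon\ge K^{-1/4+\alpha}$. This is precisely where the algebraic form \eqref{gammadelta} of $\gamma_\Delta$, the definition \eqref{defx} of $x$, and the strict inequality $\vartheta<\Delta$ enter in an essential way; the $O(K^{-1})$ slack in the probability of $\mathcal{G}_i$ easily accommodates the polynomial prefactor $K^{1/4-\alpha}$.
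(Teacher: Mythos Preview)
Your approach --- compute the drift from the a priori bounds of Steps~1--4, couple $N_{aA}$ to a biased nearest-neighbour walk, and apply a gambler's-ruin estimate between the barriers $Kx^{i+1}\epsilon$ and $K(x^i\epsilon+x^{2i}\epsilon^2)$ --- is exactly what the paper does. The paper shows (Lemma~\ref{decaylemaA}) that the embedded jump chain of $N_{aA}$ has up-probability at most $\tfrac12-C_0 x^{i+1}\epsilon$, which matches your $q-p\gtrsim x^i\epsilon$ up to the harmless constant factor $x$; it then couples monotonically to a homogeneous walk $Z^u$ and evaluates the hitting probability via the explicit equilibrium-potential (gambler's-ruin) formula.

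Your step~5, however, is misguided and should be dropped. There is no slicing into Euler sub-intervals here: the gambler's ruin is a \emph{one-shot} computation. Once the coupling to $Z^u$ holds on the good event $\mathcal G_i$ (which is valid on the whole interval up to $\tau_{aA}^{i+}\wedge\tau_{aA}^{(i+1)-}\wedge e^{VK^\alpha}$), the probability that $N_{aA}$ reaches the upper barrier before the lower one is bounded by the corresponding probability for $Z^u$, and that is a single ratio of partial geometric sums. The prefactor $K^{1/4-\alpha}$ in the paper's statement is just a crude bound on this ratio (essentially: number of terms in the numerator against a single dominant term in the denominator), not the result of a union bound in time. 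Note also that the decay time of $n_{aA}$ from $x^i\epsilon$ to $x^{i+1}\epsilon$ is $O(1/(x^i\epsilon))$ by Proposition~\ref{timeaA}, not $O(K^{1/4-\alpha})$ except at the very last value of $i$, so your count of sub-intervals would not even be right. Simply read off the polynomial prefactor from the ratio of sums.
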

For the proof we couple $n_{aA}(t)$ to majorising and minorising birth-death-immigration processes  and show that these processes jump up with
 probability less than $\frac12$. This way we prove that with high probability $n_{aA}(t)$ reaches $x^{i+1}\e$ before going back to $x^{i}\e+x^{2i}\e^2$.
 
\paragraph{Step 6: Decay time of $n_{aA}(t)$.} This is the step where we see that $n_{aA}(t)$ decays like a function $f(t)=\frac{1}{t}$. Precisely, it is shown that the time which the $aA$-population needs to decrease from $x^{i}\e$ to $x^{i+1}\e$ is of order $\frac{1}{x^i\e}$:
\begin{proposition}
\label{timeaA}
	Let 
		\begin{align}
			\theta_i(aA)\equiv\inf\left\{t\geq0: n_{aA}(t)\leq x^{i+1}\e\big|n_{aA}(0)=x^{i}\e\right\},
		\end{align}
	the decay time of $n_{aA}(t)$ on the time interval $t\in \left[\t_{aA}^{i-}, \t_{aA}^{i+}\land\t_{aA}^{(i+1)-}\land e^{VK^\a}\right]$. Then 
	for all $0\leq i\leq\left\lfloor\frac{-\ln(\e K^{1/4-\a})}{\ln(x)}\right\rfloor$   there exist finite, positive constants, $C_l,C_u$, and a constant $M>0$, such that 
	\begin{align}
		\P\left[ \frac {C_u}{x^i\e}\geq  \theta_i(aA)\geq \frac{C_l}{x^i\e}\right]\geq1-\exp\left(-MK^{1/2+2\a}\right).
	\end{align}
\end{proposition}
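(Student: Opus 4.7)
The plan is to derive the decay time through a Doob--Meyer decomposition of $n_{aA}(t)$, using the bounds of Steps 1--4 to obtain two-sided control on the drift and Freedman's inequality to control the martingale fluctuations.

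First I would expand the infinitesimal drift $(b_{aA}(t)-d_{aA}(t))/K$ of $n_{aA}(t)$ around the fixed point $\mathfrak n_{AA}=(0,0,\bar n_A)$. Using the identity $D+c\bar n_A=f$, the linear-in-$n_{aA}$ term $f\,n_{aA}$ arising from the birth rate cancels exactly the linear-in-$n_{aA}$ term in the death rate (the algebraic manifestation of the vanishing fitness $S_{aA,AA}=0$), so what is left is quadratic in the small quantities. Substituting the bounds from Propositions \ref{upSum}--\ref{ubAA}---namely $\S-\bar n_A=O(x^{2i}\e^2)$, $n_{aa}\in[0,\g_\D x^{2i}\e^2+O((x^{2i}\e^2)^{1+\a})]$ and $\bar n_A-n_{AA}\in[x^{i+1}\e-O((x^{2i}\e^2)^{1+\a}),x^i\e+O(x^{2i}\e^2)]$---the net drift takes the schematic form $-\g\, n_{aA}^{2}+O(x^{3i}\e^{3})$ with $\g>0$, matching the autonomous ODE $\dot y=-\g y^2$ that produces the $1/t$ decay of the deterministic system. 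Hence there exist constants $0<C_1\leq C_2$ depending only on $f,D,\D,\vartheta$ such that
\begin{equation*}
-C_2\,x^{2i}\e^2\ \leq\ \frac{b_{aA}(t)-d_{aA}(t)}{K}\ \leq\ -C_1\,x^{2i}\e^2
\end{equation*}
uniformly on $[\t_{aA}^{i-},\t_{aA}^{i+}\land\t_{aA}^{(i+1)-}\land e^{VK^{\a}}]$.

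Second, I would write $n_{aA}(t)-x^i\e=A(t)+Z(t)$, where $A(t)=\int_0^t(b_{aA}(s)-d_{aA}(s))/K\,ds$ is the compensator and $Z$ is a purely discontinuous martingale with jumps of size $1/K$ and predictable quadratic variation $\langle Z\rangle_t\leq \int_0^t(b_{aA}(s)+d_{aA}(s))/K^2\,ds$. Since $b_{aA}(t),d_{aA}(t)$ are both of order $K\,x^i\e$ on our time interval, $\langle Z\rangle_t\leq C'\,x^i\e\cdot t/K$. Deterministically, the drift alone carries $n_{aA}$ from $x^i\e$ down to $x^{i+1}\e$---a drop of $(1-x)x^i\e$---in a time between $(1-x)/(C_2 x^i\e)$ and $(1-x)/(C_1 x^i\e)$. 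Choosing $C_l=(1-x)/(2C_2)$ and $C_u=2(1-x)/C_1$, the event $\{\theta_i(aA)\notin[C_l/(x^i\e),C_u/(x^i\e)]\}$ forces $\sup_{s\leq C_u/(x^i\e)}|Z(s)|\geq c(1-x)x^i\e$ for some explicit $c>0$. Applying Freedman's martingale inequality on the interval of length $t=C_u/(x^i\e)$, for which $\langle Z\rangle_t\leq C''/K$ and the jumps are bounded by $1/K$, gives
\begin{equation*}
\P\Bigl(\sup_{s\leq t}|Z(s)|\geq c(1-x)x^i\e\Bigr)\ \leq\ 2\exp\!\biggl(-\frac{c^2(1-x)^2(x^i\e)^2}{2\bigl(C''/K+\tfrac{2}{3}c(1-x)x^i\e/K\bigr)}\biggr)\ \leq\ \exp\!\bigl(-M\,K(x^i\e)^2\bigr).
\end{equation*}
The constraint $0\leq i\leq\lfloor-\ln(\e K^{1/4-\a})/\ln(x)\rfloor$ guarantees $x^i\e\geq K^{-1/4+\a}$, whence $K(x^i\e)^2\geq K^{1/2+2\a}$, giving the announced exponent; combined with the $o(K^{-1})$ failure probabilities of Propositions \ref{upSum}--\ref{ubAA}, this concludes the argument.

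The main technical obstacle is the drift expansion in the first step: verifying that, after the exact linear cancellation, the residual quadratic coefficient is strictly negative and bounded away from zero. This requires the precise upper bound on $n_{aa}$ near $\g_\D x^{2i}\e^2$ from Step 2 together with the two-sided control on $\bar n_A-n_{AA}$ from Step 4, and relies on the choice \eqref{gammadelta} of $\g_\D$ and of an intermediate value $\vartheta\in(\D/2,\D)$ to ensure the margin between $C_1$ and $0$ is uniform in $i$. This is the step where the degenerate-stable character of $\mathfrak n_{AA}$ becomes a quantitative estimate on the stochastic drift, and is what distinguishes the present algebraic regime from the exponential regime of \citen{CMM13}.
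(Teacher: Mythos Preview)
Your approach is correct and genuinely different from the paper's. The paper does not use a Doob--Meyer decomposition: it reuses the discrete-time couplings $Z_n^l\preccurlyeq Y_n^{aA}\preccurlyeq Z_n^u$ constructed in Step~5, then controls the hitting time by separately bounding (i) the \emph{number of jumps} $n_*$ needed for a drop of $(1-x)x^i\e K$, via Hoeffding's inequality applied to the $\pm1$ increments $W_k$ of the biased random walks, and (ii) the \emph{duration of each jump}, via exponential Chebyshev on i.i.d.\ exponential clocks with rate $\lambda_{aA}\asymp x^i\e K$. Multiplying $n_*=O(K)$ jumps by a mean waiting time $O((x^i\e K)^{-1})$ gives the $O((x^i\e)^{-1})$ time scale. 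Your route compresses these two estimates into a single continuous-time martingale bound, which is cleaner and makes the link to the center-manifold ODE $\dot y=-\g y^2$ more transparent; the paper's route is more elementary and dovetails with the coupling machinery already built in Steps~1--5.

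One point to tighten: your implication ``$\theta_i(aA)>C_u/(x^i\e)$ forces $\sup|Z|\geq c(1-x)x^i\e$'' tacitly assumes the drift bounds remain valid up to time $C_u/(x^i\e)$, but those bounds are only guaranteed on $[\t_{aA}^{i-},\t_{aA}^{i+}\wedge\t_{aA}^{(i+1)-}]$. If $\t_{aA}^{i+}$ were hit first (an upward excursion of size $x^{2i}\e^2$, much smaller than the Freedman threshold $c(1-x)x^i\e$), your argument as written does not exclude this. The fix is immediate---either invoke Proposition~\ref{decayaA} (Step~5), which gives $\P[\t_{aA}^{i+}<\t_{aA}^{(i+1)-}]\leq K^{1/4-\a}\exp(-CK^{1/4+3\a})$, or run a second Freedman bound with threshold $x^{2i}\e^2$ on the stopped martingale---but it should be stated. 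The paper's proof handles this automatically because its couplings $Z^u,Z^l$ are those of Step~5.
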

To prove this proposition we calculate an upper bound on the decay time of the majorising process obtained in Step 5 and a lower bound on the decay time of the minorising process of the same order. 
Precisely, we estimate the number of jumps  the processes make until they reach $x^{i+1}\e$, and the time of one jump.

\paragraph{Step 7: Decay and decay time of $n_{aa}(t)$.} To carry out the iteration,  we have to ensure that, 
on a given time interval, the $aa$-population decreases below the upper bound needed for the next iteration step. 
We show that $n_{aa}(t)$ decreases from $\g_\D x^{2i}\e^2+M_{aa}(x^{2i}\e^2)^{1+\a}$ to $\g_\D x^{2i+2}\e^2$, and stays smaller than 
$\g_\D x^{2i+2}\e^2+M_{aa}(x^{2i+2}\e^2)^{1+\a}$ when $n_{aA}(t)$ reaches $x^{i+1}\e$:
\begin{proposition}
\label{timeaa}
	For $t\in \left[\t_{aA}^{i-}, \t_{aA}^{i+}\land\t_{aA}^{(i+1)-}\land e^{VK^\a}\right]$, $n_{aa}(t)$ moves from $\g_\D x^{2i}\e^2+M_{aa}(x^{2i}\e^2)^{1+\a}$ to $\g_\D x^{2i+2}\e^2$ 
	 the process $n_{aA}$ decreases  from $x^{i}\e$ to $x^{i+1}\e$, and stays below $\g_\D x^{2i+2}\e^2+M_{aa}(x^{2i+2}\e^2)^{1+\a}$ until $n_{aA}(t)$ hits the $x^{i+1}\e$-level. 
\end{proposition}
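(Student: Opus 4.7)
The plan is to exploit a separation of time scales: the process $n_{aa}(t)$ relaxes to its quasi-equilibrium at the constant rate $f+\D$ of order one, whereas by Proposition \ref{timeaA}, $n_{aA}(t)$ decays on the much slower time scale $\theta_i(aA) = \Theta(1/(x^i\e))$. Hence $n_{aa}(t)$ should track the slowly moving equilibrium $\g_\D n_{aA}(t)^2$ adiabatically, and the claim reduces to quantifying this adiabatic tracking up to time $\t_{aA}^{(i+1)-}$.

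First, I would restrict to the intersection of the high-probability events from Propositions \ref{upSum}--\ref{ubAA}, which has probability $1-o(K^{-1})$. Using $c\bar n_A = f-D$ together with $\S(t) = \bar n_A + O(x^{2i}\e^2)$, the drift of $n_{aa}(t)$ can be computed as
$$
b_{aa}(t) - d_{aa}(t) = \frac{f\,(n_{aa}(t)+n_{aA}(t)/2)^2}{\S(t)} - (D+\D+c\S(t))\,n_{aa}(t) \approx (f+\D)\bigl(\g_\D\, n_{aA}(t)^2 - n_{aa}(t)\bigr),
$$
where the explicit form $\g_\D = (f+\D/2)/(4\bar n_A(f+\D))$ absorbs the leading correction coming from both the $O(x^{2i}\e^2)$ perturbation of $\S(t)$ and the cross term $n_{aa}\,n_{aA}$ in the expansion of $b_{aa}$. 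The drift is strongly restoring toward $\g_\D n_{aA}(t)^2$.

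Next, I would couple $K n_{aa}(t)$ from above to a birth-death-immigration process $Y^+(t)$ with constant immigration rate corresponding to the current upper bound on $n_{aA}(t)$ (namely $x^i\e+x^{2i}\e^2$) and linear death rate $(f+\D)\cdot n$, exactly analogous to the coupling used in the proof of Proposition \ref{ubaa}. The stationary mean of $Y^+/K$ equals the quasi-equilibrium and the stationary fluctuations are of order $\sqrt{\g_\D(x^i\e)^2/K}$. Since $x^{i+1}\e \geq K^{-1/4+\a}$, these fluctuations are $O(K^{-3/4+\a})$, much smaller than the target tolerance $M_{aa}(x^{2i+2}\e^2)^{1+\a}$ even at the last iteration. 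By the potential-theoretic estimates of Step 2, the probability that $Y^+/K$ exceeds its instantaneous mean by more than this tolerance over the time window $\theta_i(aA) = O(1/(x^i\e))$ is $o(K^{-1})$; a symmetric lower coupling $Y^-$ handles the lower bound.

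The main obstacle is that $n_{aA}(t)$ is not constant: as it decays continuously from $x^i\e$ to $x^{i+1}\e$, so does the quasi-equilibrium of $n_{aa}$. I would partition $[\t_{aA}^{i-}, \t_{aA}^{(i+1)-}]$ into sub-intervals on which $n_{aA}$ varies by at most a factor $1+\a'$ for some small $\a' > 0$, and re-couple $n_{aa}$ on each sub-interval to a BDI process whose parameters match the endpoint of that sub-interval. Since the relaxation time $1/(f+\D) = O(1)$ is much shorter than the length of each sub-interval (of order $\a'/(f\,x^i\e)$), after a short boundary layer at the start of each sub-interval $n_{aa}$ is within $M_{aa}(x^{2i}\e^2)^{1+\a}$ of the current quasi-equilibrium. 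Iterating across the sub-intervals until $n_{aA}$ first hits $x^{i+1}\e$ yields $n_{aa}(\t_{aA}^{(i+1)-}) \leq \g_\D x^{2i+2}\e^2 + M_{aa}(x^{2i+2}\e^2)^{1+\a}$ with probability $1 - o(K^{-1})$, which is the condition required to initialize step $i+1$ of the iteration.
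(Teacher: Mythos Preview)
Your adiabatic-tracking strategy is sound in outline and would go through, but it is more elaborate than the paper's argument and rests on a slight misreading of the constant $\g_\D$. The leading quasi-equilibrium of $n_{aa}$ is $\frac{f}{4\bar n_A(f+\D)}\,n_{aA}^2$, not $\g_\D n_{aA}^2$; the extra $\D/2$ in the numerator of $\g_\D$ is not absorbing a correction term but is a deliberately built-in \emph{margin}. The paper exploits this margin to avoid your sub-interval partition entirely. Because $\g_\D x^{2i+2}\e^2$ lies strictly above the true quasi-equilibrium even when $n_{aA}$ is still at its maximal value $x^i\e$ (this is exactly where the hypothesis $\vartheta>\D/2$ enters), a single coupling of the discretised process $Y_n^{aa}$ to a majorising chain with \emph{constant} downward bias $C_u$, independent of $i$, suffices on the whole interval $[\t_{aA}^{i-},\t_{aA}^{(i+1)-}]$. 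The paper then argues in three short parts: (1) the constant bias drives $n_{aa}$ down to $\g_\D x^{2i+2}\e^2$ before it can return to $\g_\D x^{2i}\e^2+3M_{aa}(x^{2i}\e^2)^{1+\a}$, by the one-dimensional hitting-probability estimate of Step~2; (2) the number of jumps needed is $O(x^{2i}\e^2K)$ while the jump rate is $\Theta(x^{2i}\e^2K)$, so the decay time $\theta_i(aa)$ is $O(1)$, far shorter than $\theta_i(aA)=\Theta(1/(x^i\e))$; (3) once at the new level, a re-run of the Step~2 argument keeps $n_{aa}$ below $\g_\D x^{2i+2}\e^2+M_{aa}(x^{2i+2}\e^2)^{1+\a}$ until $n_{aA}$ hits $x^{i+1}\e$. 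Your approach trades this single-coupling-plus-persistence structure for repeated re-couplings on finer sub-intervals; that buys nothing here because the margin in $\g_\D$ already guarantees a uniform drift, though it would be the natural technique if $\g_\D$ had been chosen exactly at the quasi-equilibrium.
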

The proof of this proposition has three parts:
First, as in Step 5, we show that $n_{aa}(t)$ has the tendency to decrease and that it reaches $\g_\D x^{2i+2}\e^2$ before 
going back to $\g_\D x^{2i}\e^2+M_{aa}(x^{2i}\e^2)^{1+\a}$.
The second part is similar to Step 6, where we estimate the number of jumps and the duration of  one jump of the process. 
In the last part we show, as in Step 2, that the process stays below $\g_\D x^{2i+2}\e^2+M_{aa}(x^{2i+2}\e^2)^{1+\a}$ until $n_{aA}(t)$ hits the level $x^{i+1}\e$ and the next iteration step starts.

\paragraph{Step 8: Growth and growth time of $\S(t)$.} Similarly to Step 7, we also have to ensure that the sum process 
increases from the level $\bar n_A-\frac{\D+\vartheta}{c\bar n_A}\g_\D x^{2i}\e^2-M_{\S}(x^{2i}\e^2)^{1+\a}$ to 
$\bar n_A-\frac{\D+\vartheta}{c\bar n_A}\g_\D x^{2i+2}\e^2$ on a given time interval and is greater than 
$\bar n_A-\frac{\D+\vartheta}{c\bar n_A}\g_\D x^{2i+2}\e^2-M_{\S}(x^{2i+2}\e^2)^{1+\a}$ when the $aA$-population reaches the 
level $x^{i+1}\e$:
\begin{proposition}
\label{timeSum}
	While  $n_{aA}$ decreases
	 from $x^{i}\e$ to $x^{i+1}\e$, the sum process $\S(t)$ increases from \\
	 $\bar n_A-\frac{\D+\vartheta}{c\bar n_A}\g_\D x^{2i}\e^2-M_{\S}(x^{2i}\e^2)^{1+\a}$ to $\bar n_A-\frac{\D+\vartheta}{c\bar n_A}\g_\D x^{2i+2}\e^2$ and stays above 
	 $\bar n_A-\frac{\D+\vartheta}{c\bar n_A}\g_\D x^{2i+2}\e^2-
	 M_{\S}(x^{2i+2}\e^2)^{1+\a}$ until the $aA$-population hits the
	  $x^{i+1}\e$-level.
\end{proposition}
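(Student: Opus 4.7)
The plan is to follow the three-part strategy of Proposition \ref{timeaa}, with the direction of motion reversed because $\S(t)$ now has a positive rather than negative drift. The central input is the drift identity for the sum process: from the jump rates of $\S$ given in Section 2, the infinitesimal mean of $\S(t)$ is
\begin{equation*}
\mu_\S(\S,n_{aa}) = c(\bar n_A-\S)\S - \D n_{aa},
\end{equation*}
so that the quasi-equilibrium of $\S$ at fixed $n_{aa}$ lies at $\bar n_A-\D n_{aa}/(c\bar n_A)+O(n_{aa}^2)$. Combined with the upper bound on $n_{aa}(t)$ from Proposition \ref{ubaa} and with Propositions \ref{upSum}, \ref{lbSum} confining $\S(t)$ to its iteration window, this drift is at least $\vartheta\,\g_\D\,x^{2i}\e^2 + O((x^{2i}\e^2)^{1+\a})$ whenever $\S(t)$ lies in the lower half of its window. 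By the choice $\vartheta>\D/2$ and the definition of $\g_\D$ in \eqref{gammadelta}, this quantity is strictly positive for $\e$ small enough, so $\S(t)$ has a genuine push toward the new target.

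For the monotone growth I would couple the rescaled chain $K\S(t)$ to two birth-death-immigration processes whose transition rates sandwich those of $K\S$ on the good event of Propositions \ref{ubaa}, \ref{upSum}, \ref{lbSum}. The drift calculation above shows that the lower comparison chain jumps up with probability bounded away from $1/2$ on the relevant range, so a gambler's-ruin estimate analogous to the one used in Step 5 yields that $\S(t)$ reaches the new target $\bar n_A-\frac{\D+\vartheta}{c\bar n_A}\g_\D x^{2i+2}\e^2$ before dropping below the safety threshold $\bar n_A-\frac{\D+\vartheta}{c\bar n_A}\g_\D x^{2i}\e^2 - M_\S(x^{2i}\e^2)^{1+\a}$, with failure probability of order $\exp(-CK^{1/2+2\a})$.

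For the growth time, the distance to cover is $\frac{\D+\vartheta}{c\bar n_A}\g_\D(x^{2i}-x^{2i+2})\e^2 = O(x^{2i}\e^2)$, i.e.\ $O(Kx^{2i}\e^2)$ jumps with holding time $O(1/K)$ each; the growth therefore takes time $O(1)$ uniformly in $i$, which is negligible compared with the decay time $\Theta(1/(x^i\e))$ of $n_{aA}(t)$ from Proposition \ref{timeaA}. Hence the growth completes well before $n_{aA}(t)$ reaches $x^{i+1}\e$. Finally, once $\S(t)$ is above the new target I would reapply the argument of Proposition \ref{lbSum} with parameters shifted to $i+1$: the same birth-death-immigration coupling, now centered on the new target, shows that $\S(t)$ does not drop below $\bar n_A-\frac{\D+\vartheta}{c\bar n_A}\g_\D x^{2i+2}\e^2 - M_\S(x^{2i+2}\e^2)^{1+\a}$ on the remainder of $[\t_{aA}^{i-},\t_{aA}^{(i+1)-}\land e^{VK^\a}]$, with failure probability $o(K^{-1})$.

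The principal obstacle is that the drift of $\S(t)$ is coupled to $n_{aa}(t)$ through the $-\D n_{aa}$ term, so every sub-argument of this step requires the tightened control on $n_{aa}$ from Proposition \ref{timeaa} to be in force simultaneously. This is handled by working on the intersection of the good events of Steps 1--7 restricted to the current iteration window; the union-bound cost is $o(K^{-1})$ per iteration, and since only $O(\ln K)$ iterations are needed to drive $n_{aA}$ down to $K^{-1/4+\a}$, the total failure probability remains $o(1)$, which is exactly what is needed to feed the iteration into the conclusion of Theorem \ref{main1}(ii).
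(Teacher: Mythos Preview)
Your three-part scheme matches the paper's Step~8 proof. Two quantitative corrections are needed. First, the paper explicitly works on $[\tau_{aA}^{i-}+\theta_i(aa),\,\tau_{aA}^{(i+1)-}]$, i.e.\ only \emph{after} Proposition~\ref{timeaa} has already pushed $n_{aa}$ below $\g_\D x^{2i+2}\e^2+M_{aa}(x^{2i+2}\e^2)^{1+\a}$; with that sharper input the jump-down probability of the discretised sum process is bounded by $\tfrac12-C_0x^{2i+2}\e^2$ uniformly on the range $[v(\S_-),v(\S_+)]$, whereas with only Proposition~\ref{ubaa} the bias at the target $v(\S_+)$ degrades to $((\D+\vartheta)x^2-\D)\g_\D x^{2i}\e^2$. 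Second, your gambler's-ruin exponent $K^{1/2+2\a}$ is too optimistic: the bias per step is only $O(x^{2i}\e^2)$ and the gap from the starting level to the bad threshold (which must sit at $-3M_\S(x^{2i}\e^2)^{1+\a}$, not $-M_\S$, so that there is room to move) is $O((x^{2i}\e^2)^{1+\a}K)$, so the exponent is of order $x^{2i}\e^2\cdot(x^{2i}\e^2)^{1+\a}K\geq K^{7\a/2+2\a^2}$, which is what the paper obtains and still gives $o(K^{-1})$.

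For Part~2, your jump count $O(Kx^{2i}\e^2)$ is the net displacement, not the number of steps: with bias $O(x^{2i}\e^2)$ per step one needs $O(K)$ jumps (the paper's $N=\tilde CK$), and $O(K)$ jumps at rate $O(K)$ give time $O(1)$. Your stated conclusion $O(1)$ is correct even though the intermediate arithmetic $O(Kx^{2i}\e^2)\cdot O(1/K)$ would give $O(x^{2i}\e^2)$, not $O(1)$. None of these points alters the structure, which is the same as the paper's.
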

The proof uses the same three parts as described in the proof of Proposition \ref{timeaa}.

\paragraph{\textbf{Total decay time of $n_{aA}(t)$}.} We iterate Step 1 to 8 until $i=\left\lfloor\frac{-\ln(\e K^{1/4-\a})}{\ln(x)}\right\rfloor$, 
the value for which $n_{aA}(t)$ is of order $K^{-1/4+\a}$. Finally, we sum up the decay time of the $aA$-population in each 
iteration step and get the desired result (Theorem \ref{main1} (ii)). \\
Moreover, we ensure the upper bound on the mutation probability $\mu_K$ in Theorem \ref{main2}.
\section{The deterministic system}
\label{pardet}

\subsection{The large population approximation}
The main ingredient for the second phase is the deterministic system, since we know from \citen{F_MA} or \citen{CMM13} that, for large populations, the behaviour of the stochastic
 process is close to the solution of a deterministic equation. Thus we analyse it here.
\begin{proposition}[Proposition 3.2 in \citen{CMM13}]
\label{LPA}
Let $T>0$ and $C\subset\R_+^3$ compact.
Assume that the initial condition $\frac{1}{K}(N_{aa}^0,N_{aA}^0,N_{AA}^0)$ converges almost surely to a deterministic vector $(x_0,y_0,z_0)\in C$ when $K$ goes to infinity. 
Let $(x(t),y(t),z(t))=\phi(t;(x_0,y_0,z_0))$ denote the solution to
\begin{align}
\label{det}
\dot\phi(t;(x_0,y_0,z_0))=\left(
\begin{array}{c}
\tilde b_{aa}(x(t),y(t),z(t))-\tilde d_{aa}(x(t),y(t),z(t))\\
\tilde b_{aA}(x(t),y(t),z(t))-\tilde d_{aA}(x(t),y(t),z(t))\\
\tilde b_{AA}(x(t),y(t),z(t))-\tilde d_{AA}(x(t),y(t),z(t))
\end{array}
\right)=:X(x(t),y(t),z(t)),
\end{align}
where 
\begin{align*}
\tilde b_{aa}(x(t),y(t),z(t))&=\frac{(f_{aa}x(t)+\sfrac{1}{2}f_{aA}y(t))^2}{(f_{aa}x(t)+f_{aA}y(t)+f_{AA}z(t))},\\
\tilde d_{aa}(x(t),y(t),z(t))&=x(t)(D_{aa}+c_{aa,aa}x(t)+c_{aa,aA}y(t)+c_{aa,AA}z(t)),
\end{align*}
and similar expression for the $aA$- and $AA$-type. Then, forall $T>0$,
\begin{align}
\label{inva}
\lim_{K\rightarrow\infty}\sup_{t\in[0,T]}|n_{uv}(t)-\phi_{uv}(t;(n_{aa}^0,n_{aA}^0,n_{AA}^0))|=0\quad, a.s.,
\end{align}
for all $uv\in\{aa, aA, AA\}$.
\end{proposition}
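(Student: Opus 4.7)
The plan is to follow the classical martingale method for convergence of density-dependent Markov processes, in the spirit of Ethier--Kurtz and Fournier--Méléard \citen{F_MA}. The starting point is to apply the generator $L^K$ to the coordinate maps $F_{uv}(\nu) = \langle \nu, \1_{uv} \rangle$ for $uv \in \{aa, aA, AA\}$ and obtain the semi-martingale decomposition
\begin{equation*}
n_{uv}(t) = n_{uv}(0) + \int_0^t \bigl(\tilde b_{uv}(n(s)) - \tilde d_{uv}(n(s))\bigr)\, ds + M_{uv}^K(t),
\end{equation*}
where $M_{uv}^K$ is a purely discontinuous square-integrable martingale. A direct computation from $L^K$ gives the predictable quadratic variation bound $\langle M_{uv}^K \rangle_t \leq \frac{C}{K}\int_0^t (1 + \langle \nu_s, \1\rangle^2)\, ds$, as a consequence of Assumption \textbf{(A1)} and the fact that each jump has size $1/K$.

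First, I would establish a uniform moment bound $\sup_K \E\bigl[\sup_{t \leq T}\langle \nu_t^K, \1 \rangle^2\bigr] < \infty$, obtained by coupling the total mass $\S(t)$ from above with a linear birth-death process of per-capita rate $\bar f$ and applying Gronwall to its second moment. Second, Doob's $L^2$ inequality then yields $\E\bigl[\sup_{t \leq T}|M_{uv}^K(t)|^2\bigr] = \OO(1/K)$, so along a sufficiently sparse subsequence, by Borel--Cantelli, the martingale part vanishes uniformly on $[0,T]$ almost surely. Third, since the deterministic flow $\phi(\,\cdot\,;(x_0,y_0,z_0))$ starting in the compact set $C$ remains in a bounded region $\wt C \subset \R_+^3$ throughout $[0,T]$, and the drift $X$ is Lipschitz on any bounded set on which the denominator $f_{aa}x + f_{aA}y + f_{AA}z$ is bounded below, a Gronwall argument applied on the event that both trajectories stay in a tube around $\phi$ gives
\begin{equation*}
\sup_{t \leq T}\|n(t) - \phi(t)\| \leq \bigl(\|n(0) - \phi(0)\| + 2\sup_{t \leq T}\|M^K(t)\|\bigr) e^{L_{\wt C} T},
\end{equation*}
and the right-hand side tends to $0$ almost surely by the two previous steps and the hypothesis on the initial condition.

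The main subtlety is the non-Lipschitz character of the Mendelian birth rates $\tilde b_{uv}$ at the origin, arising from division by the total fertility $\langle \nu, f\rangle$. I would deal with this by localization: under Assumption \textbf{(A2)} the total fertility along $\phi$ is bounded below by a strictly positive constant on $[0,T]$, so one works up to the stopping time $\tau^K$ at which $n(t)$ first exits a fixed tube around $\phi(t)$ on which the denominator remains uniformly positive. The estimates above show that $\P(\tau^K \leq T) \to 0$, which closes the loop and yields the almost sure uniform convergence \eqv(inva).
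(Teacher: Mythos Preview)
The paper does not prove this proposition at all: it is quoted verbatim as Proposition~3.2 of \citen{CMM13} (itself resting on \citen{F_MA}) and used as a black box for Phase~2. Your sketch is therefore not to be compared against an in-paper argument but against the Fournier--M\'el\'eard scheme it invokes, and in that respect your outline is the standard one: semimartingale decomposition via $L^K$, $O(1/K)$ control of the quadratic variation, uniform moment bounds on the total mass, and a localized Gronwall argument to handle the non-Lipschitz Mendelian birth rates near the origin. This is exactly the machinery behind the cited result.

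One small point to tighten. The $O(1/K)$ bound on $\E\bigl[\sup_{t\le T}|M^K_{uv}(t)|^2\bigr]$ gives convergence in probability directly, but the series $\sum_K 1/K$ diverges, so Borel--Cantelli only delivers almost sure convergence along a lacunary subsequence, as you yourself note. To obtain the full-sequence a.s.\ statement \eqv(inva) as written, you either need a sharper deviation estimate (e.g.\ exponential bounds from the jump structure, which are available here since jump sizes are $1/K$ and rates are bounded on the localization event) or simply observe that in \citen{F_MA} and \citen{CMM13} the conclusion is actually convergence in probability, which is all the paper ever uses in Phase~2. Either fix is routine; just be explicit about which you take.
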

Thus, to understand the behaviour of the process we have to analyse the deterministic system \eqref{det} above. The vector field \eqref{det} of the model we consider is given by
\begin{align}
\label{XD}
X(x,y,z)=X_\D(x,y,z)=
\begin{pmatrix}
f\frac{(x+\frac{1}{2}y)^2}{x+y+z}-(D+\D+c(x+y+z))x\\
2f\frac{(x+\frac{1}{2}y)(z+\frac{1}{2}y)}{x+y+z}-(D+c(x+y+z))y\\
f\frac{(z+\frac{1}{2}y)^2}{x+y+z}-(D+c(x+y+z))z
\end{pmatrix},
\end{align}
which has some particular properties:
\begin{theorem}
\label{dettheo}
Assume \textbf{(A)}+\textbf{(B)} and let $\e>0$, then
\begin{itemize}
\item[(i)] the vector field \eqref{XD} has the unstable fixed point $\mathfrak n_{aa}\equiv(\bar n_a,0,0)$ and the stable fixed point $\mathfrak n_{AA}\equiv(0,0,\bar n_A)$,
\item[(ii)] the Jacobian matrix at the unstable fixed point $\mathfrak n_{aa}$, $DX_\D(\mathfrak n_{aa})$, has two negative and one positive eigenvalues,
\item[(iii)] the Jacobian matrix at the stable fixed point $\mathfrak n_{AA}$, $DX_\D(\mathfrak n_{AA})$, has two negative and one zero eigenvalues,
\item[(iv)] for $\varrho<f\D$, and as soon as the $aA$-population decreased to an $\e$-level, then
\begin{align}
\frac{2\bar n_A(f+\D)}{(f\D+\varrho) t+\sfrac{2\bar n_A(f+\D)}{\e}}\leq n_{aA}(t)\leq\frac{2\bar n_A(f+\D)}{(f\D-\varrho) t+\sfrac{2\bar n_A(f+\D)}{\e}}.
\end{align}
\end{itemize}
\end{theorem}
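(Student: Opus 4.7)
Parts (i)--(iii) reduce to direct Jacobian computations. Plugging $(\bar n_a,0,0)$ into \eqref{XD}, the second and third components vanish automatically because both contain a factor of $y$ or $(z+y/2)$, and the first reduces to $\bar n_a(f-D-\D-c\bar n_a)=0$ by the definition \eqref{equi} under Assumption \textbf{B2}; the same check at $(0,0,\bar n_A)$ gives the other fixed point. For (ii) I would compute $DX_\D(\mathfrak n_{aa})$ entry-by-entry: because $y=z=0$ at $\mathfrak n_{aa}$ the birth terms carry factors that vanish identically, so the Jacobian comes out upper triangular with diagonal $(-c\bar n_a,\,\D,\,-(f-\D))$; the middle entry is exactly the invasion fitness $S_{aA,aa}=\D>0$, while the other two are strictly negative by \textbf{A2} (noting $f>D+\D$). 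Part (iii) is analogous: at $\mathfrak n_{AA}$ we have $x=y=0$, the matrix becomes lower triangular, and the diagonal is $(-(f+\D),\,0,\,D-f)$. The middle zero arises because at this point the birth contribution to $\partial_y X^2$ is $f$ and the death contribution is $D+c\bar n_A=f$; the eigenvector of the zero eigenvalue is $(0,1,-1)$.

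The main content is (iv): converting the zero eigenvalue of (iii) into an algebraic $1/t$-decay rate. My plan is to invoke a centre-manifold reduction at $\mathfrak n_{AA}$. Since the non-zero eigenvalues $-(f+\D)$ and $D-f$ are strictly negative, there is a one-dimensional $C^2$ centre manifold $\mathcal M^c$ tangent to $(0,1,-1)$ with transverse exponential attraction at rate $\k\equiv\min(f+\D,\,f-D)>0$. Parametrising $\mathcal M^c$ by $y$ with the ansatz
\begin{align*}
x(y)=\a y^2+O(y^3),\qquad z(y)=\bar n_A-y+\g y^2+O(y^3),
\end{align*}
plugging into \eqref{XD} and matching $y^2$-coefficients fixes $\a=f/(4\bar n_A(f+\D))$ (the coefficient $\g$ does not enter the reduced equation at this order). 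Substituting back into the middle component of \eqref{XD} yields the reduced flow
\begin{align*}
\dot y=-\frac{f\D}{2\bar n_A(f+\D)}\,y^2+O(y^3)\quad\text{on }\mathcal M^c.
\end{align*}
Given $\varrho<f\D$, $\e$ can be chosen small enough that on $\mathcal M^c$ the exact $y$-equation is squeezed between $\dot y=-\tfrac{f\D\pm\varrho}{2\bar n_A(f+\D)}y^2$; integrating each scalar ODE from $y(0)=\e$ and applying the comparison principle gives precisely the double inequality of (iv).

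The step I expect to be the main obstacle is verifying that, at the moment $n_{aA}(t)$ first hits the level $\e$, the trajectory is already close enough to $\mathcal M^c$ that the reduced-ODE bounds apply for all subsequent time. Centre-manifold theory only supplies exponential attraction with rate $\k$, so one has to feed this in by hand to track the transverse deviation explicitly. My plan is to control the transverse coordinates $(x-\a y^2,\,z-\bar n_A+y-\g y^2)$ through their linearised equation: the leading block is the $(-(f+\D),D-f)$-contraction and the inhomogeneity is $O(y^2)$, which forces them to stabilise at $O(y^2)$ after contracting for a time $O(1)$. Since during Phase 2 the solution passes through a fixed $\d$-neighbourhood of $\mathfrak n_{AA}$ for a time of order $\ln(\d/\e)$ before $n_{aA}$ drops to $\e$, this contraction has ample time to occur, and the residual $O(y^2)$ deviation from $\mathcal M^c$ feeds back into $\dot y$ only as an $O(y^3)$ perturbation that is absorbed into the slack $\varrho$ for $\e$ small enough.
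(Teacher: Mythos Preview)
Your proposal is correct and follows essentially the same route as the paper: direct computation of the Jacobians for (i)--(iii), and a centre-manifold reduction at $\mathfrak n_{AA}$ for (iv), yielding the same reduced equation $\dot y=-\tfrac{f\Delta}{2\bar n_A(f+\Delta)}y^2+O(y^3)$ and the same coefficient $\alpha=f/(4\bar n_A(f+\Delta))$ for the $n_{aa}$-component of the centre manifold. The only cosmetic difference is that the paper first passes to diagonalising coordinates $(\tilde z,\tilde y,\tilde x)$ via a linear change of variables and parametrises the centre manifold by $\tilde y=y+\tfrac{2f}{f+\Delta}x$, whereas you parametrise directly by $y=n_{aA}$; since $x=O(y^2)$ on the manifold these agree to the relevant order. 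Your final paragraph on controlling the transverse deviation when $n_{aA}$ first hits $\epsilon$ is a point the paper does not make explicit, so your treatment is in fact slightly more careful there.
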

There is also a biological explanation for the behaviour of $n_{aA}(t)$ described in Theorem \ref{dettheo} (iv). 
Since the $A$-allele is the fittest and dominant one and because of the phenotypic viewpoint the $aA$-population is as fit 
as the $AA$-population and both die with the same rate. The $aA$-population only decreases because of the 
disadvantage in reproduction due to the less fit, decreasing $aa$-population.
Observe that Theorem \ref{dettheo} (i)+(ii) also holds in the model of \citen{CMM13} (cf. Proposition 3.3 therein) but the 
Jacobian matrix of their fixed point $\mathfrak n_{AA}$ has three negative eigenvalues and thus they get the exponential
decay of $n_{aA}(t)$.\\
The behaviour of solutions of the deterministic system can be analysed  using the following result of
\citen{CMM13}:
\begin{theorem}[Theorem C.2 in \citen{CMM13}]
\label{C2}
Let $\z=u_A-u_a$ be the variation of the allelic trait. Suppose it is non zero and of small enough modulus. 
If $\z\frac{dS_{aA,aa}}{d\z}(0)>0$ then the fixed point $\mathfrak n_{aa}$ is unstable and we have fixation for the 
macroscopic dynamics.\\
More precisely, there exists an invariant stable curve $\G_{\z}$ which joins $\mathfrak n_{aa}$ to $\mathfrak n_{AA}$. 
Moreover there exists an invariant tubular neighbourhood $\VV$ of $\G_\z$ such that the orbit of any initial condition in 
$\VV$ converges to $\mathfrak n_{AA}$.\\
If $\z\frac{dS_{aA,aa}}{d\z}(0)<0$ the fixed point $\mathfrak n_{aa}$ is stable and the mutant disappears in the 
macroscopic dynamics.
\end{theorem}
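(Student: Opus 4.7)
The plan is to prove Theorem \ref{C2} as a perturbation result around the degenerate system at $\z = u_A - u_a = 0$, combining linear stability at $\mathfrak n_{aa}$, the Unstable Manifold Theorem, and a global convergence argument built from the allele-frequency dynamics. First I would compute $DX(\mathfrak n_{aa})$. The subspace $\{y=z=0\}$ is invariant and there the dynamics reduces to the one-dimensional Lotka-Volterra equation $\dot x = (f_{aa}-D_{aa})x - c_{aa,aa}x^2$ with stable fixed point $\bar n_a$, giving one negative eigenvalue along $(1,0,0)$. The $2\times 2$ block on the transverse $(y,z)$-directions has, after a change of basis that isolates the ``rare mutant'' direction, one eigenvalue equal to the invasion fitness $S_{aA,aa}$ and one eigenvalue with negative real part. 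At $\z=0$ the three genotypes are indistinguishable, so $S_{aA,aa}(0) = 0$; a first-order Taylor expansion then shows that for small non-zero $\z$ the sign of $S_{aA,aa}(\z)$ coincides with that of $\z\frac{dS_{aA,aa}}{d\z}(0)$. When this product is positive, $\mathfrak n_{aa}$ is a hyperbolic saddle with a one-dimensional unstable direction, and the Unstable Manifold Theorem supplies a smooth invariant curve $\G_\z^{\mathrm{loc}}$ tangent to that direction; extending it forward by the flow yields the global invariant curve $\G_\z$.

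To show that $\G_\z$ asymptotes to $\mathfrak n_{AA}$, I would pass to the coordinates $(\S, p)$ with $\S = x+y+z$ the total population size and $p = (z + y/2)/\S$ the frequency of allele $A$. A direct computation produces an equation of selection type $\dot p = p(1-p)\,G(p;\z)$ with $G>0$ on $(0,1)$ under the sign hypothesis, together with a logistic-type equation for $\S$ that drives it to a quasi-equilibrium near $\bar n_A$ whenever $p$ is bounded away from $0$. Monotone convergence of $p$ to $1$, combined with a LaSalle-type invariance argument excluding other $\o$-limit sets inside the positive octant, forces orbits on $\G_\z$ to converge to $\mathfrak n_{AA}$. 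The invariant tubular neighbourhood $\VV$ of $\G_\z$ then follows from local (asymptotic or centre-direction) stability of $\mathfrak n_{AA}$, continuous dependence on initial data, and transversality of the stable foliation of $\G_\z$. In the opposite case $\z\frac{dS_{aA,aa}}{d\z}(0)<0$, the invasion fitness becomes negative, all three eigenvalues of $DX(\mathfrak n_{aa})$ are negative, $\mathfrak n_{aa}$ is locally asymptotically stable, and the mutant disappears.

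The hard part is the global convergence step, since three-dimensional flows fall outside the reach of Poincar\'e-Bendixson. The reduction to the planar $(\S,p)$-system rests on a genuine timescale separation between fast demography (of order $f$) and slow selection (of order $\z$), which one would make rigorous through a singular-perturbation theorem of Tikhonov type; excluding further invariant sets inside the positive octant also requires more than linear analysis. In the non-hyperbolic regime relevant to the present paper, where $\mathfrak n_{AA}$ carries a zero eigenvalue, the attraction along the centre direction is only algebraic: one then has to invoke centre manifold theory together with a normal-form computation to verify that the centre direction remains attracting inside $\VV$, which is precisely the mechanism behind the $1/t$ decay of $n_{aA}(t)$ exploited throughout the main results.
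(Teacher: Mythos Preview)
Your strategy differs substantially from the one the paper attributes to \citen{CMM13}. You construct $\G_\z$ as the one-dimensional unstable manifold of $\mathfrak n_{aa}$ and then try to follow it forward to $\mathfrak n_{AA}$ via an allele-frequency reduction and a Tikhonov-type slow--fast argument. The paper's proof instead starts from the neutral system $X_0$ at $\z=0$, which possesses an entire \emph{curve} $\G_0$ of fixed points parametrised by the allele frequency; this curve is transversally (normally) hyperbolic, since the two transverse eigenvalues $-(f-D)$ and $-f$ are negative along all of $\G_0$. Fenichel/Hirsch--Pugh--Shub persistence (Theorem 4.1 in \citen{H77}) then yields, for small $\z$, a nearby smooth invariant attracting curve $\G_\z$ together with an invariant tubular neighbourhood $\VV$ that is foliated by the stable leaves. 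The flow on $\G_\z$ is one-dimensional, so the only remaining task is to show that $X_\z$ has no zeros in $\VV$ other than the two endpoints; this is done by projecting $X_\z$ onto the left eigenvectors $\b_1,\b_2,\b_3$ of $DX_0(\G_0(v))^t$, using the implicit function theorem for two of the equations (Proposition B.2) and the Malgrange preparation theorem for the third (Lemma B.3, Theorem B.4).

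The main gap in your route is the tubular neighbourhood $\VV$. You try to assemble it from local stability at $\mathfrak n_{AA}$, continuous dependence, and ``transversality of the stable foliation of $\G_\z$'', but the last item is precisely normal hyperbolicity of $\G_\z$, which you have not established; the unstable-manifold theorem at a single equilibrium gives no information about the transverse contraction along the rest of the curve. In the CMM13 approach this comes for free from the normally hyperbolic structure of $\G_0$. A second gap is the $(\S,p)$ reduction: this is a projection from three dimensions to two, not a change of variables, and the missing coordinate (departure from Hardy--Weinberg) must be controlled; you acknowledge that a Tikhonov-type argument is needed but only sketch it. By contrast, once CMM13 have the attracting one-dimensional $\G_\z$, all three-dimensional difficulties disappear and the global statement reduces to a zero-counting problem for a scalar function $g(v)$, which is exactly where the present paper's modification (the computation \eqref{easycal1}--\eqref{easycal2} showing $g(v)=(v+\bar n_A)(v-\bar n_A)^2/(4\bar n_A^2)$) enters.
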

Their proof works as follows. First they consider the unperturbed version $X_0$ of the vector field \eqref{det} in the case of 
neutrality between the alleles $A$ and $a$. That is $f_{u_1u_2}=f$, $D_{u_1u_2}=D$ and $C_{u_1u_2,v_1v_2}=c$, for 
$u_1u_2,v_1v_2\in\{aa,aA,AA\}$. They get that this system has a line of fixed points $\G_0$ which is transversally 
hyperbolic. Afterwards they consider the system $X_\z$ with small perturbations $\z$. 
From Theorem 4.1 in \citen{H77} (p. 39) they deduce that there exists an attractive and invariant curve $\G_\z$, converging to 
$\G_0$, as $\z\rightarrow0$.
Hence, there is a small enough tubular neighbourhood $\VV$ of $\G_0$ such that $\G_\z$ is contained in $\VV$ and 
attracts all orbits with initial conditions in $\VV$ (cf. Figure \ref{fig2}).
To show that the orbit of any initial condition on $\G_\z$ converges to one of the two fixed points $\mathfrak n_{aa}$ or 
$\mathfrak n_{AA}$, one have to ensure that the vector field does not vanish on $\G_\z$, except for these two fixed 
points. Since the curve $\G_\z$ is attractive, it is equivalent to look for the fixed points in the tubular $\VV$. 
For finding the zero points in $\VV$, \citen{CMM13} use linear combinations of the left eigenvectors of $DX_0(\G_0(v))^t$, 
with the perturbed vector field $X_\z$. First they quote to zero the two linear combinations with the eigenvectors which 
span the stable affine subspace. By the implicit function Theorem, they get a curve which contains all possible zeros in 
$\VV$ (cf. Proposition B.2 in \citen{CMM13}). Then they consider the last linear combination and look for the points on the 
received curve where it vanish. Under the conditions that the derivative of the third linear combination at the point 
$\bar n_A$ is non zero and does not vanishes between the fixed points $\pm\bar n_A$ they get that $X_\z$ has only 
two zeros in a tabular neighbourhood of $\G_0$ (cf. Theorem B.4 in \citen{CMM13}).\\
We have to do some extra work to get the same result for the model with dominant $A$-allele since the derivative of the 
third linear combination in our model, described above, is zero at the point $\bar n_A$. But by an easy calculation (see \eqref{easycal1} and \eqref{easycal2}) we can 
indeed prove that this point is an isolated zero and we can deduce from Theorem \ref{C2} the following corollary, which is the main result we need about the dynamical system.
\begin{corollary}
\label{fp}
Let $\D\neq0$ small enough. 
\begin{itemize}
\item[(i)] The attracting  and invariant curve $\Gamma_\D$ of the perturbed vector field $X_\D$ \eqref{det} contained in the 
positive quadrant, is the piece of unstable manifold between the equilibrium points $\mathfrak n_{aa}$ and 
$\mathfrak n_{AA}$.
\item[(ii)] There exists an invariant tubular neighbourhood $\VV$ of $\Gamma_\D$ such that the orbit of any initial condition 
in $\VV$ converges to the equilibrium point $\mathfrak n_{AA}$.
\end{itemize}
\end{corollary}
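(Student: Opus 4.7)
The plan is to derive the corollary as a direct consequence of Theorem \ref{C2} of \citen{CMM13}, after verifying two things: that the hypotheses of that theorem are met with $\Delta$ playing the role of the perturbation parameter $\zeta$, and that the degeneracy at $\mathfrak n_{AA}$ (the zero eigenvalue of Theorem \ref{dettheo}(iii)) does not produce spurious equilibria on the invariant curve. The sign condition is immediate: by \eqref{infit}, $S_{aA,aa}(\Delta) = \Delta$, hence $\frac{dS_{aA,aa}}{d\Delta}(0) = 1 > 0$, so for small $\Delta > 0$ Theorem \ref{C2} produces an attracting invariant curve $\Gamma_\Delta$ inside a tubular neighbourhood $\VV$ of the line of neutral equilibria $\Gamma_0$, and all orbits started in $\VV$ are trapped and asymptote to $\Gamma_\Delta$.

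The real content is to show that $X_\Delta$ has \emph{only} the two zeros $\mathfrak n_{aa}$ and $\mathfrak n_{AA}$ on $\Gamma_\Delta$. Once this is established, the trajectory along $\Gamma_\Delta$ connecting the two fixed points is forced to be the one-dimensional unstable manifold of $\mathfrak n_{aa}$ (by Theorem \ref{dettheo}(ii)) terminating at the partially stable $\mathfrak n_{AA}$ (by Theorem \ref{dettheo}(iii)), and every orbit in $\VV$ then converges to $\mathfrak n_{AA}$, giving both parts (i) and (ii).

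To locate the zeros, I would follow the strategy recalled in the excerpt: parametrise $\Gamma_0$ by $v$, decompose $X_\Delta$ in the basis of left eigenvectors of $DX_0(\Gamma_0(v))^t$, and use the implicit function theorem applied to the two components along the stable eigendirections. Just as in Proposition B.2 of \citen{CMM13}, this yields a smooth one-dimensional candidate curve $\mathcal C \subset \VV$ containing every zero of $X_\Delta$ near $\Gamma_0$. Zeros of $X_\Delta$ on $\mathcal C$ then correspond to zeros of a single scalar function $g(v,\Delta)$ (the remaining, ``third'' linear combination). In \citen{CMM13} one has $\partial_v g(\bar n_A, 0) \neq 0$, which gives $\bar n_A$ as an isolated zero by implicit function theorem. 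In our setting this first-order quantity vanishes, precisely the degeneracy reflected in the zero eigenvalue of $DX_\Delta(\mathfrak n_{AA})$.

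The main obstacle is therefore ruling out a continuum of near-equilibria accumulating at $\mathfrak n_{AA}$. I would handle this by computing the next Taylor coefficient of $g$ around $v = \bar n_A$ explicitly, using the simplifying Assumptions \textbf{(B)}. The two short computations alluded to in the text as \eqref{easycal1}--\eqref{easycal2} should give $g(\bar n_A, \Delta) = 0$, $\partial_v g(\bar n_A, \Delta) = 0$, but $\partial_v^2 g(\bar n_A, 0) \neq 0$ (or an analogous next-order non-vanishing), forcing $v = \bar n_A$ to be an isolated zero of $g(\cdot, \Delta)$ for small $\Delta$. Together with the analogous (non-degenerate) analysis at $v = -\bar n_A$ corresponding to $\mathfrak n_{aa}$, this shows that $X_\Delta$ has exactly two zeros on $\mathcal C \cap \VV$, completing the application of Theorem \ref{C2} and hence proving both statements of the corollary.
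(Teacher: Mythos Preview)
Your proposal follows essentially the same route as the paper's proof. Both invoke the framework of Theorem~\ref{C2} from \citen{CMM13}, reduce the ``only two zeros'' question to the scalar function along the $\beta_2$-direction via Proposition~B.2, and identify the degeneracy at $v=\bar n_A$ as the one new ingredient. The paper, however, carries the key calculation \eqref{easycal1}--\eqref{easycal2} to its explicit conclusion: from $\partial_\Delta X_\Delta=(-x,0,0)^t$ it obtains the closed form $g(v)=\frac{(v+\bar n_A)(v-\bar n_A)^2}{4\bar n_A^2}$, which exhibits the double zero at $\bar n_A$, the simple zero at $-\bar n_A$, and the nonvanishing on $(-\bar n_A,\bar n_A)$ all at once. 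Your plan to extract only the Taylor coefficient $\partial_v^2 g(\bar n_A,0)\neq 0$ is consistent with this (indeed $\partial_v^2 g(\bar n_A)=1/\bar n_A$), but the explicit formula is both easier and more informative. One caution: the bare statement ``$\partial_v^2 g\neq 0$ forces an isolated zero under perturbation'' is not quite enough, since a double zero can in principle split; what rescues the argument (in both your sketch and the paper's) is the a~priori knowledge that $\mathfrak n_{AA}$ is a fixed point for every $\Delta$, so one zero is pinned at $v=\bar n_A$ and the explicit factorisation controls the rest. The paper also adds a short lemma locating the local stable and unstable manifolds of $\mathfrak n_{aa}$ in the positive quadrant, which you assert but do not argue.
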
 
Hence, if we start the process in a neighbourhood of the unstable fixed point $\mathfrak n_{aa}$, it will leave this 
neighbourhood in finite time and converge to a neighbourhood of the stable fixed point $\mathfrak n_{AA}$.\\

\begin{figure}[t]
	\centering
\includegraphics[width=0.5\textwidth]{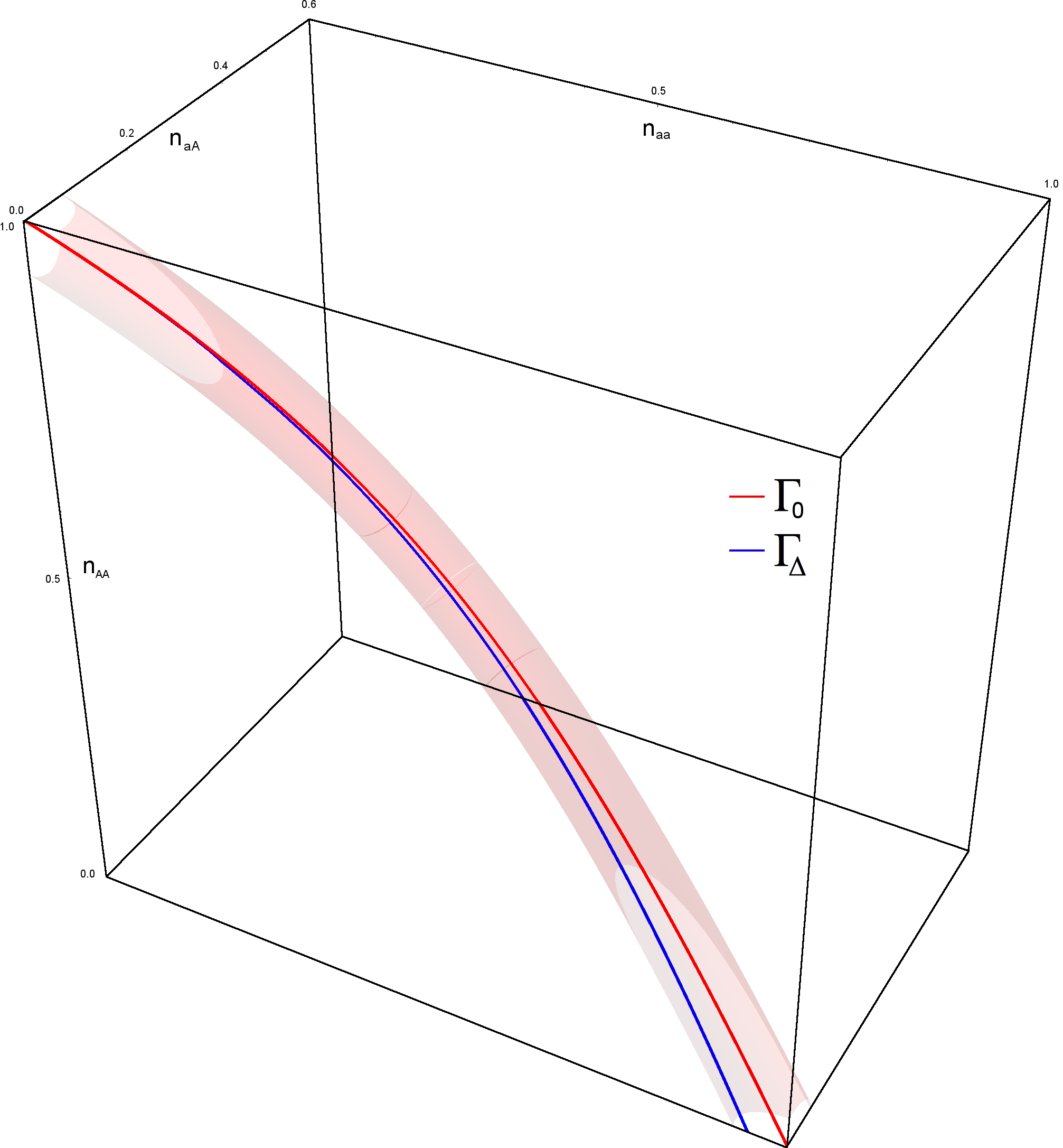}
	\caption{The curves $\G_0$, $\G_\D$ and the tube $\VV$ in the perturbed vector field $X_\D$ \tiny{(Simulation by Loren Coquille)}}
	\label{fig3}
\end{figure}
\section{Proofs of Theorem \ref{dettheo} and the main theorems}
\subsection{Analysis of the deterministic system}
\label{detproof}
Because of Proposition \ref{LPA} we have to analyse the deterministic system \eqref{det} 
 (a simulation is shown in Figure \ref{fig1}).
\begin{proof}[Proof of Theorem \ref{dettheo}]
In the following we consider the differential equations of $n_{aa}(t), n_{aA}(t)$ and $n_{AA}(t)$, given by \eqref{XD}:
\begin{align}
\label{diffeq1}
\dot n_{aa}(t)&=f\frac{\left(n_{aa}(t)+\frac12n_{aA}(t)\right)^2}{\S(t)}-n_{aa}(t)(D+\D+c\S(t)),\\
\label{diffeq2}
\dot n_{aA}(t)&=2f\frac{\left(n_{aa}(t)+\frac12n_{aA}(t)\right)\left(n_{AA}(t)+\frac12n_{aA}(t)\right)}{\S(t)}-n_{aA}(t)(D+c\S(t)),\\
\label{diffeq3}
\dot n_{AA}(t)&=f\frac{\left(n_{AA}(t)+\frac12n_{aA}(t)\right)^2}{\S(t)}-n_{AA}(t)(D+c\S(t)).
\end{align}
\paragraph{The Fixed Points: }
By summing \eqref{diffeq1} to \eqref{diffeq3} first, and two times \eqref{diffeq3} and \eqref{diffeq2}, we see that the vector field \eqref{XD} vanishes for the points $\mathfrak n_{aa}$ and $\mathfrak n_{AA}$. 
The Jacobian matrix at the fixed point $\mathfrak n_{aa}$ is given by
\begin{align}
\label{DXaa}
DX_\D((\bar n_a,0,0))=
\begin{pmatrix}
-f+D+\D & -f+D+\D & -2f+D+\D\\
0 & \D & 2f\\
0 &  0 & -f+\D
\end{pmatrix}.
\end{align}
The matrix has the three eigenvalues $\lambda_1=-(f-D-\D)$, $\lambda_2=\D$ and $\lambda_3=-(f-\D)$. For $\D$ small enough and from Assumption \textbf{(A2)} we know that $\lambda_1,\lambda_3<0$, whereas $\lambda_2>0$. Thus the fixed point $\mathfrak n_{aa}$ is unstable.\\
The Jacobian matrix at the fixed point $\mathfrak n_{AA}$ is given by
\begin{align}
DX_\D((0,0,\bar n_A))=
\begin{pmatrix}
 -f-\D &  0 &  0\\
2f & 0 & 0\\
-2f+D & -f+D & -f+D
\end{pmatrix},
\end{align}
it has the three eigenvalues $\lambda_1=-f-\D<0$, $\lambda_2=0$ and $\lambda_3=-(f-D)<0$, from Assumption \textbf{(A2)}. 
The fact that one of the eigenvalues is zero is the main 
novel feature of this system compared to that count in \citen{CMM13}. 
Because of the zero eigenvalue, $\mathfrak n_{AA}$ is a non-hyperbolic equilibrium point of the system and linearization fails to determine its stability properties. Instead, we use the result of the center manifold theory (\citen{H77, P01}) that asserts that the qualitative behaviour of the dynamical system in a neighbourhood of the non-hyperbolic critical point $\mathfrak n_{AA}$ is determined by its behaviour on the center manifold near $\mathfrak n_{AA}$.

\begin{theorem}[The Local Center Manifold Theorem 2.12.1 in \citen{P01}] Let $f\in C^r(E)$, where $E$ is an open subset of $\R^n$ containing the origin and $r\geq1$. Suppose that $f(0)=0$ and $Df(0)$ has $c$ eigenvalues with zero real parts and $s$ eigenvalues with negative real parts, where $c+s=n$. Then the system $\dot z=f(z)$ can be written in diagonal form
\begin{align}
\dot x&=Cx+F(x,y)\nonumber\\
\dot y&=Py+G(x,y),
\end{align}
where $z=(x,y)\in\R^c\times\R^s$, $C$ is a $c\times c$-matrix with $c$ eigenvalues having zero real parts, $P$ is a $s\times s$-matrix with $s$ eigenvalues with negative real parts, and $F(0)=G(0)=0, DF(0)=DG(0)=0.$ Furthermore, there exists $\delta>0$ and a function, $h\in C^r(N_\d(0))$, where $N_\d(0)$ is the $\d$-neighbourhood of $0$, that defines the local center manifold and satisfies:
\begin{align}
\label{cme}
Dh(x)[Cx+F(x,h(x))]-Ph(x)-G(x,h(x))=0,
\end{align}
for $|x|<\d$. The flow on the center manifold $W^c(0)$ is defined by the system of differential equations
\begin{align}
\dot x=Cx+F(x,h(x)),
\end{align}
for all $x\in\R^c$ with $|x|<\d$.
\end{theorem}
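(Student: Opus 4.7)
The plan is to establish this as a classical center manifold result via a contraction mapping argument after putting the system into a convenient normal form. The statement has three parts: the existence of the diagonal decomposition, the existence of the invariant function $h$ satisfying \eqref{cme}, and the derivation of the reduced flow. The first part is purely linear algebra and the third part is essentially a tautology once $h$ is found; the substance is part two.

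First I would use the hypothesis that $Df(0)$ has $c$ eigenvalues on the imaginary axis and $s$ with strictly negative real part to decompose $\mathbb{R}^n = E^c \oplus E^s$ into $Df(0)$-invariant generalized eigenspaces. A linear change of variables aligning the coordinates with this splitting produces the block diagonal linearization $\mathrm{diag}(C,P)$ with the stated spectral properties, and writing $f(z) = Df(0)z + (f(z) - Df(0)z)$ identifies $F,G$ as the higher-order pieces, giving $F(0)=G(0)=0$ and $DF(0)=DG(0)=0$ automatically from $f \in C^r$.

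Next I would globalize and shrink the nonlinearities by a smooth cutoff. Fixing a bump $\psi \in C^\infty(\mathbb{R}^c;[0,1])$ equal to $1$ on $B_1$ and vanishing outside $B_2$, define $\tilde F(x,y) = F(\psi(x/\delta)x,\, \psi(|y|/\delta) y)$ and similarly $\tilde G$. The modified vector field coincides with the original on the $\delta$-ball $N_\delta(0)$, so any invariant graph of the truncated system that passes through the origin yields the local center manifold. The key gain is that $\tilde F,\tilde G$ and their first derivatives are globally bounded by quantities $O(\delta)$, uniformly small as $\delta \downarrow 0$. I would then set up the problem in the Banach space $\mathcal X$ of Lipschitz maps $h:\mathbb{R}^c \to \mathbb{R}^s$ with $h(0)=0$, $\|h\|_\infty \le \delta$, and $\mathrm{Lip}(h) \le 1$, endowed with the sup norm. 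For each such $h$, the $C$-system $\dot x = Cx + \tilde F(x,h(x))$ has a global flow $\varphi_s^h$; since $C$ has only purely imaginary eigenvalues, $\|e^{Cs}\|$ grows at most polynomially in $|s|$, and the $O(\delta)$ Lipschitz perturbation changes this only mildly. Using that $P$ is hyperbolic stable, I would define
\begin{equation*}
(Th)(x_0) = \int_{-\infty}^{0} e^{-Ps}\, \tilde G\bigl(\varphi_s^h(x_0),\, h(\varphi_s^h(x_0))\bigr)\, ds,
\end{equation*}
which is exactly the bounded solution of the inhomogeneous linear ODE $\dot z = Pz + \tilde G(\varphi_s^h(x_0), h(\varphi_s^h(x_0)))$ obtained by integrating along trajectories; this is the invariance equation \eqref{cme} in integrated form.

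The main technical step, and the chief obstacle, is verifying that $T$ maps $\mathcal X$ into itself and is a strict contraction. Convergence of the integral relies on the spectral gap: for $s<0$ one has $\|e^{-Ps}\| \lesssim e^{\alpha s}$ for some $\alpha>0$, which beats the at-most-polynomial growth of $\varphi_s^h$ provided $\delta$ is small enough. Smallness of $\tilde G$ then forces $\|Th\|_\infty \le \delta$ and $\mathrm{Lip}(Th) \le 1$; the contraction estimate $\|Th_1 - Th_2\|_\infty \le \kappa \|h_1 - h_2\|_\infty$ with $\kappa<1$ comes from splitting the difference $\tilde G(\varphi^{h_1},h_1\circ\varphi^{h_1}) - \tilde G(\varphi^{h_2},h_2\circ\varphi^{h_2})$, controlling the sensitivity of the $C$-flow to $h$ by Gronwall, and then absorbing these estimates against the exponential weight $e^{-Ps}$. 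A unique fixed point $h_* \in \mathcal X$ results, and bootstrap arguments (differentiating the fixed-point equation $r$ times and repeating the contraction in a $C^k$-space for $1 \le k \le r$) upgrade regularity to $h_* \in C^r(N_\delta(0))$, with $Dh_*(0)=0$ following from $DF(0)=DG(0)=0$. Finally, differentiating the identity $h_*(x(t)) = $ (integral along the trajectory) in $t$ recovers the PDE \eqref{cme}, and the reduced flow equation $\dot x = Cx + F(x, h_*(x))$ is then just the definition of the $x$-component of the flow restricted to the graph $W^c(0) = \{(x,h_*(x)): |x|<\delta\}$, yielding the final assertion.
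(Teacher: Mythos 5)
The paper does not prove this statement: it is quoted verbatim as Theorem 2.12.1 of Perko \citen{P01} and then applied as a black box, so there is no ``paper's own proof'' to compare against. Perko himself defers the proof to Carr's monograph on centre manifolds, and the Lyapunov--Perron programme you outline --- spectral splitting of $Df(0)$, smooth cutoff, contraction on a space of Lipschitz graphs, regularity bootstrap --- is essentially that classical argument, so your route coincides with the one the cited source ultimately relies on.

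Two technical caveats in your sketch deserve flagging. First, after the cutoff the centre flow $\varphi_s^h$ does \emph{not} grow only polynomially. The unperturbed $e^{Cs}$ grows at most polynomially because $C$ has purely imaginary spectrum, but the $O(\delta)$-Lipschitz perturbation $\tilde F(\cdot,h(\cdot))$ upgrades this, via Gronwall, to growth of order $e^{C_0\delta|s|}$. The integral in the Lyapunov--Perron operator still converges because one can choose $\delta$ so small that $C_0\delta<\alpha$, where $\alpha$ is the stable spectral gap, but the correct bookkeeping is ``small exponential beaten by large exponential,'' not ``polynomial beaten by exponential.'' Second, and more substantively, the $C^r$ upgrade is the genuinely delicate step. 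Saying one ``differentiates the fixed-point equation $r$ times and repeats the contraction in a $C^k$-space'' glosses over the fact that the formally differentiated operator acts on pairs $(h,Dh,\dots)$ and is not obviously a contraction in the natural norm; the standard resolution is the fibre contraction theorem of Hirsch--Pugh (or, equivalently, a careful inductive scheme on a scale of weighted $C^k$ spaces with $\delta$ shrunk at each stage, using that the required gap condition $k\cdot 0<\alpha$ holds trivially for every $k\le r$). As written, your bootstrap names the right idea but leaves a gap precisely where the known proofs concentrate most of their effort.
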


The fact that the center manifold of our system near $\mathfrak n_{AA}$ has dimension one, simplifies the problem of determining the stability and the qualitative behaviour of the flow on it near the non-hyperbolic critical point. The Local Center Manifold Theorem shows that the non-hyperbolic critical point $\mathfrak n_{AA}$ is indeed a stable fixed point and that the flow on the center manifold near the critical point behaves like a function $\frac1t$.
This can be seen as follows: Assume that $n_{aA}(t)$ has 
decreased to a level  $\e$. Let $n_{AA}(t)=z(t), n_{aA}(t)=y(t)$ and $n_{aa}(t)=x(t)$.
By the affine transformation $n_{AA}\mapsto n_{AA}-\bar n_A$ we get a translated system
\begin{align}
\label{YD}
Y(z,y,x)=
\begin{pmatrix}
f\frac{(z+\bar n_A+\frac{1}{2}y)^2}{z+y+x+\bar n_A}-(D+c(z+y+x+\bar n_A))(z+\bar n_A)
\\
2f\frac{(z+\bar n_A+\frac{1}{2}y)(x+\frac{1}{2}y)}{z+y+x+\bar n_A}-(D+c(z+y+x+\bar n_A))y\\
f\frac{(x+\frac{1}{2}y)^2}{z+y+x+\bar n_A}-(D+\D+c(z+y+x+\bar n_A))x
\end{pmatrix},
\end{align}
which has a critical point at the origin. The Jacobian matrix of $Y$ at the fixed point $(0,0,0)$ is given by
\begin{align}
DY((0,0,0))=
\begin{pmatrix}
-(f-D) & -(f-D) & -(2f-D)\\
0 & 0 & 2f\\
0 & 0 & -(f+\D)\\
\end{pmatrix},
\end{align}
which has the eigenvalues $\lambda_1=-(f-D), \lambda_2=0$ and $\lambda_3=-(f+\D)$ with corresponding eigenvectors 
\begin{align}
EV_1=
\begin{pmatrix}
1 \\
0\\
0 
\end{pmatrix},\quad
EV_2=
\begin{pmatrix}
-1 \\
1\\
0 
\end{pmatrix},\quad
\text{ and }\quad
EV_3=
\begin{pmatrix}
\frac{fD+\D(2f-D)}{(f+\D)(D+\D)}\\
-\frac{2f}{f+\D}\\
1
\end{pmatrix}.
\end{align}
We can find a transformation
\begin{align}
T=
\begin{pmatrix}
1 & 1 & \frac D{D+\D}\\
0 & 1 & \frac{2f}{f+\D}\\
0 & 0 & 1
\end{pmatrix},
\end{align}
that transforms $DY((0,0,0))$ into a block matrix. The change of variables 
\begin{align}
\begin{pmatrix}
\tilde z \\
\tilde y\\
\tilde x 
\end{pmatrix}=T
\begin{pmatrix}
z\\
y\\
x
\end{pmatrix},
\end{align}
casts the system into the form
\be
\dot{\tilde z}=\dot z+\dot y+\frac D{D+\D}\dot x,\qquad
\dot{\tilde y}=\dot y+\frac {2f}{f+\D}\dot x,\qquad
\dot{\tilde x}=\dot x.
\ee
Let $h(\tilde y)$ be the local center manifold. We approximate $h(\tilde y)=\tbinom{h_1}{h_2}\tilde y^2+\OO(\tilde y^3)$ and substitute the series expansions into the center manifold equation \eqref{cme}
\begin{align}
\binom{2h_1\tilde y}{2h_2\tilde y}\dot{\tilde \tilde y}(h_1\tilde y^2,\tilde y,h_2\tilde y^2)=\binom{\dot{\tilde z}(h_1\tilde y^2,\tilde y,h_2\tilde y^2)}{\dot{\tilde x}(h_1\tilde y^2,\tilde y,h_2\tilde y^2)}.
\end{align}
To determine $h_1$ and $h_2$ we compare the coefficients of the same powers of $\tilde y$. We first consider $\dot{\tilde y}(h_1\tilde y^2,\tilde y,h_2\tilde y^2)$ and get that the coefficient of $\tilde y$ is zero. The first coefficients which are not zero at the right hand side are the ones of $\tilde y^2$. Thus we have to compare these with the coefficient of $\tilde y^2$ on the left hand side which is zero as mentioned above. Hence, the coefficient of $\dot{\tilde x}(h_1\tilde y^2,\tilde y,h_2\tilde y^2)$ of $\tilde y^2$, which is $-h_2(f+\D)+\frac f{4\bar n_A}$, equals zero and we get $h_2=\frac{f}{4\bar n_A(f+\D)}$. From the coefficient of $\dot{\tilde z}(h_1\tilde y^2,\tilde y,h_2\tilde y^2)$ of $\tilde y^2$, which is $Dh_1-\frac{f\D(2D+\D)}{4\bar n_A(f+\D)(D+\D)}$, we get $h_1=\frac{f\D(2D+\D)}{4\bar n_AD(f+\D)(D+\D)}$. Thus the local center manifold is given by 
\begin{align}
h(\tilde y)=
 \begin{pmatrix}
  \sfrac{f\D^2}{4\bar n_AD(f+\D)(D+\D)}\\
  \sfrac{f}{4\bar n_A(f+\D)}
 \end{pmatrix}
\tilde y^2+\OO(\tilde y^3).
\end{align}
Substitution of this result into $\dot{\tilde y}$ yields 
the flow on the local center manifold
\begin{align}
\dot{\tilde y}=-\frac{f\D}{2\bar n_A(f+\D)}\tilde y^2+\OO(\tilde y^3).
\end{align}
We can bound the solution of this equation with initial condition $y(0)=\e$ by
\begin{align}
\frac{2\bar n_A(f+\D)}{(f\D+\varrho) t+\sfrac{2\bar n_A(f+\D)}{\e}}\leq\tilde y(t)\leq\frac{2\bar n_A(f+\D)}{(f\D-\varrho) t+\sfrac{2\bar n_A(f+\D)}{\e}},
\end{align}
for $\varrho<f\D$.
Thus we see that $\mathfrak{n}_{AA}$ is a stable fixed point and $n_{aA}(t)$ approaches this fixed point like a function $\frac 1{t}$.
\end{proof}
In contrast  to the model of \citen{CMM13}, we see in the proof that $n_{aA}(t)$ does not dies out exponentially fast. Instead it decays like a function $f(t)=\frac{1}{t}$, as soon as $n_{aA}+n_{AA}\approx\bar n_A$ and thus survives a longer time in the population (cf. Figure \ref{fig1} and \ref{fig5}). Up to now, we know how the deterministic system evolves near its fixed points. Namely: if we start the process in a neighbourhood of the unstable fixed point $\mathfrak n_{aa}$, it will leave this neighbourhood in finite time. Whereas, if the process is in a neighbourhood of the stable fixed point $\mathfrak n_{AA}$, it converges to this point $\mathfrak n_{AA}$, but the convergence is slower than in the model of \citen{CMM13}. \\
We now turn to the analysis of the behaviour between these points. \\[0,3cm]
\paragraph{Behaviour between the Fixed Points: }
We show that the deterministic system \eqref{det} moves from a neighbourhood of the unstable fixed point $\mathfrak n_{aa}$ to a neighbourhood of the stable fixed point $\mathfrak n_{AA}$ (Corollary \ref{fp}).
\begin{proof}[Proof of Corollary \ref{fp}]
The proof is similar to the proof of Theorem C.2 in \citen{CMM13}. It only differs in the last step. 
The general unperturbed vector field $X_0$ in the case of neutrality between the $a$- and $A$-alleles is given by
\begin{equation}
\label{undet}
X_0=
\begin{pmatrix}
f\frac{(x+\frac{1}{2}y)^2}{x+y+z}-(D+c(x+y+z))x\\
2f\frac{(x+\frac{1}{2}y)(z+\frac{1}{2}y)}{x+y+z}-(D+c(x+y+z))y\\
f\frac{(z+\frac{1}{2}y)^2}{x+y+z}-(D+c(x+y+z))z
\end{pmatrix}.
\end{equation}
The content of Theorem B.1 in \citen{CMM13} is that $X_0$ \eqref{undet} has a line of fixed points given by,
\begin{equation}
\Gamma_0(v)=
\begin{pmatrix}
\frac{(v-\bar n_A)^2}{4\bar n_A}\\
-\frac{v^2-\bar n_A^2}{2\bar n_A}\\
\frac{(v+\bar n_A)^2}{4\bar n_A}
\end{pmatrix}, \quad v\in [-\bar n_A,\bar n_A],
\end{equation}
and that 
the differential of the vector field $X_0$ at each point of the curve $\Gamma_0$, $DX_0(\Gamma_0(v))$, has the three eigenvectors
\be
e_1(v)=
\begin{pmatrix}
\frac{(v-\bar n_A)^2}{4\bar n_A}\\
-\frac{v^2-\bar n_A^2}{2\bar n_A}\\
\frac{(v+\bar n_A)^2}{4\bar n_A}
\end{pmatrix},\hspace{2mm}
e_2(v)=
\begin{pmatrix}
\frac{v-\bar n_A}{2\bar n_A}\\
-\frac{v}{\bar n_A}\\
\frac{v+\bar n_A}{2\bar n_A}
\end{pmatrix},\hspace{2mm}
e_3(v)=
\begin{pmatrix}
1\\
-2\\
1
\end{pmatrix},
\ee
with respective eigenvalues $-(f-D)<0$, $0$ and $-f<0$.  $DX_0(\Gamma_0(v))^t$ has the three eigenvalues, $-f+D, 0$, and $-f$, with corresponding eigenvectors
\begin{align}
\b_1(v)&=\frac{1}{\bar n_A}
\begin{pmatrix}
1\\1\\1
\end{pmatrix},\hspace{2mm}
\b_2(v)=
\begin{pmatrix}
-\frac{v+\bar n_A}{\bar n_A}\\
-\frac{v}{\bar n_A}\\
-\frac{v-\bar n_A}{\bar n_A}
\end{pmatrix},\hspace{2mm}
\b_3(v)=
\begin{pmatrix}
\frac{(v+\bar n_A)^2}{2\bar n_A}\\
\frac{v^2-\bar n_A^2}{2\bar n_A}\\
\frac{(v-\bar n_A)^2}{2\bar n_A}
\end{pmatrix},
\end{align}
which satisfy, for any $i,j\in\{1,2,3\}$ and any $v$,
\begin{align}
\label{EVb}
\langle\b_i(v),e_j(v)\rangle=\d_{i,j}.
\end{align}
Next we analyse the asymptotics of the flow associated to the perturbed vector field, $X_\D$ \eqref{XD}, 
as $t\rightarrow\infty$. 
From Theorem B.1 in \citen{CMM13} we know that the curve of  fixed points,  $\G_0$, is transversally hyperbolic and 
invariant for the vector field $X_0$. Thus, Theorem 4.1 in \citen{H77} implies that, for small enough $\D$,  there is an 
attractive curve, $\G_\D$ that is  invariant under $X_\D$.
 Moreover, $\G_\D$ is regular and converges to $\G_0$, as  $\D\rightarrow0$. Hence,  there is a small  tubular 
 neighbourhood, $\VV$,  of $\G_0$ such that $\G_\D$ is contained in $\VV$ and attracts all orbits with initial conditions in
  $\VV$ (cf. Figure \eqref{fig3}).
 
We want to study the flow associated to the vector field $X_\D$. From the remarks above we know that the curve
 $\G_\D$ is attractive for this flow. Thus, it suffices to analyse the flow on the curve. Precisely, we want to show that the 
 vector field does not vanish on $\G_\D$ except for the two fixed points, $\mathfrak n_{aa}$ and $\mathfrak n_{AA}$. 
 Thus  the orbit of any initial condition on $\G_\D$ converges to one of the two fixed points. 
 It is easier to look for the fixed points in the tube $\VV$, which is equivalent to looking for fixed points on $\G_\D$ 
 because of the attractiveness
 of this curve. 
Since Hirsch gives us only a local statement, we consider $\VV$ in local frames. A point $(x,y,z)\in\VV$ is represented by the parametrisation
\begin{align}
M(v,r,s)=\G_0(v)+re_1(v)+se_3(v)=(1+r)\G_0(v)+s\frac{d^2\G_0(v)}{dv^2},
\end{align}
with $v\in[-\bar n_A-\d,\bar n_A+\d]$ and $r,s\in[-\d,\d]$, with $\d>0$ chosen small enough. The determinant of the Jacobian matrix of the transformation $(v,r,s)\mapsto(x,y,z)=M(v,r,s)$ is $-\frac{r+1}{2}$ and thus invertible and does not vanish if $0<\d<1$. Moreover, it is a diffeomorphism which maps $[-\bar n_A-\d,\bar n_A+\d]\times [-\d,\d]^2$ to a closed neighbourhood of $\VV$.\\
For finding the zero points in $\VV$, we use linear combinations of the left eigenvectors $\b_i, i\in\{1,2,3\}$, with the 
perturbed vector field $X_\D$. First we look for zeros of  the two linear combinations of $X_{\D}$
 with the eigenvectors $\b_1$ and $\b_3$ 
 which spans the stable affine subspace. By the implicit function Theorem, we obtain a curve which contains all possible 
 zeros in $\VV$. Then we consider the last linear combination of $X_{\D}$ with $\b_2$ and zeros of this linear combination 
 on the curve above.

\begin{proposition}[Proposition B.2 in \citen{CMM13}]
\label{curve}
For any $\d>0$ small enough, there is a number $\D_0=\D_0(\delta)$ such that, for any $\D\in[-\D_0,\D_0]$, there is a smooth curve $\ZZ_\D=(r_\D(v), s_\D(v))\subset\R^2$, depending smoothly on $\D$ and converging to $0$ when $\D$ tends to zero such that, for any $v\in[-\bar n_A-\delta,\bar n_A+\delta]$, we have
\begin{equation}
\langle\b_1(v), X_\D(M(v, r_\D(v), s_\D(v)))\rangle=\langle\b_3(v), X_\D(M(v, r_\D(v), s_\D(v)))\rangle=0.
\end{equation}
Moreover, if a point $(v, r, s)$ with $v\in[-\bar n_A-\delta, \bar n_A+\delta], r$ and $s$ small enough is such that
\begin{equation}
\langle\b_1(v), X_\D(M(v, r, s))\rangle=\langle\b_3(v), X_\D(M(v, r, s))\rangle=0,
\end{equation}
then $(r, s)=(r_\D(v), s_\D(v))$.
\end{proposition}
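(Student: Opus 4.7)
The plan is to invoke the implicit function theorem in the variables $(r,s)$, with $v$ and $\Delta$ as parameters. Define the smooth map
\begin{equation*}
F(v,r,s,\Delta) \;\equiv\; \bigl(\langle \b_1(v),X_\D(M(v,r,s))\rangle,\; \langle \b_3(v),X_\D(M(v,r,s))\rangle\bigr) \in \R^2.
\end{equation*}
First I would verify the base case: since $X_0$ vanishes identically on $\G_0$, we have $X_0(M(v,0,0))=X_0(\G_0(v))=0$ and hence $F(v,0,0,0)=0$ for every $v\in[-\bar n_A-\d,\bar n_A+\d]$. So the candidate curve at $\D=0$ is simply $(r,s)\equiv(0,0)$, which is exactly what the proposition demands (together with smoothness in $\D$).

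Next I would compute the partial Jacobian $D_{(r,s)}F$ at $(r,s,\D)=(0,0,0)$. Differentiating the parametrisation,
\begin{equation*}
\partial_r M(v,0,0)=e_1(v), \qquad \partial_s M(v,0,0)=e_3(v),
\end{equation*}
and using $DX_0(\G_0(v))e_1(v)=-(f-D)e_1(v)$ and $DX_0(\G_0(v))e_3(v)=-f\,e_3(v)$ together with the biorthogonality \eqref{EVb}, I obtain
\begin{equation*}
D_{(r,s)}F(v,0,0,0)=\begin{pmatrix}-(f-D) & 0 \\ 0 & -f\end{pmatrix}.
\end{equation*}
Under Assumption \textbf{(A2)} this matrix has nonzero determinant $f(f-D)$, uniformly in $v$, so the implicit function theorem produces, for each $v_0\in[-\bar n_A-\d,\bar n_A+\d]$, a neighbourhood in $v$ and a $\D_0>0$ on which smooth solutions $r_\D(v)$, $s_\D(v)$ exist with $r_0(v)=s_0(v)=0$. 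Covering the compact $v$-interval by finitely many such neighbourhoods and taking the minimum of the $\D_0$'s gives a single $\D_0=\D_0(\d)$ and smooth functions $r_\D(v), s_\D(v)$ defined globally; uniqueness in the implicit function theorem guarantees that the local pieces agree on overlaps. The smoothness in $\D$ and the fact that $(r_\D(v),s_\D(v))\to (0,0)$ as $\D\to 0$ are built into the implicit function theorem (smooth parameters give smooth solutions), so no extra work is needed.

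Finally, the uniqueness statement in the proposition follows from the uniqueness clause of the implicit function theorem applied in a sufficiently small $\R^2$-ball around $(0,0)$: if $(r,s)$ is small enough and satisfies $\langle\b_1(v),X_\D(M(v,r,s))\rangle = \langle\b_3(v),X_\D(M(v,r,s))\rangle=0$, then necessarily $(r,s)=(r_\D(v),s_\D(v))$. I do not see a genuine obstacle here; the only delicate point is making the domain of $(r,s)$-uniqueness uniform in $v$, which is handled by the compactness argument above, together with the fact that the Jacobian $D_{(r,s)}F(v,0,0,0)$ is continuous in $v$ and bounded away from being singular (its determinant equals the constant $f(f-D)>0$).
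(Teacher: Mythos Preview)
Your argument is correct and matches the paper's approach: the paper does not give a detailed proof here but cites \cite{CMM13} and explicitly says ``By the implicit function Theorem, we obtain a curve which contains all possible zeros in $\VV$.'' Your computation of the $(r,s)$-Jacobian as $\mathrm{diag}(-(f-D),-f)$ via $\partial_r M=e_1(v)$, $\partial_s M=e_3(v)$, the eigenvalue relations for $DX_0(\G_0(v))$, and the biorthogonality $\langle\b_i,e_j\rangle=\d_{i,j}$ is exactly the intended verification, and the compactness argument to get a uniform $\D_0(\d)$ is standard.
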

Next, we look the points of the resulting curve (obtained from Proposition \ref{curve}) where the third linear combination of
 the components vanishes. Since 
\be
\langle\b_2(v),X(0,M(v,r_0(v),s_0(v)))\rangle
=\langle\b_2(v),X(0,M(v,0,0))\rangle
=\langle\b_2(v),X(0,\G_0(v))\rangle
=0,
\ee
this function vanishes for $\D=0$ in $v$ and we apply the Malgrange preparation Theorem \cite {GG73}, 
which provides a representation for the linear combination near $\D=0$:
\begin{align}
\label{MPT}
\langle\b_2(v),X(\D,M(v,r_\D(v),s_\D(v)))\rangle=\D^2h(\D,v)+\D g(v),
\end{align}
where $h,g$ are two smooth functions.
To show that the third linear combination indeed vanishes only in small neighbourhoods of the points $\pm\bar n_A$, \citen{CMM13} use a representation for the function $g$ which is independent of the perturbation $\D$. \begin{lemma}[Lemma B.3 in \citen{CMM13}]
The function $g$ in \eqref{MPT} is given by
\begin{align}
g(v)=\langle\b_2(v),\partial_\D X_0(\G_0(v))\rangle.
\end{align}
\end{lemma}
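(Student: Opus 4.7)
The plan is to extract $g(v)$ by differentiating the Malgrange representation \eqref{MPT} once in $\D$ at $\D=0$. The right-hand side $\D^2 h(\D,v)+\D g(v)$ has $\D$-derivative $g(v)$ at zero, so the whole lemma reduces to showing that the corresponding derivative of the left-hand side equals $\langle\b_2(v),\partial_\D X_0(\G_0(v))\rangle$. Before computing it, I would record that at $\D=0$ both sides of \eqref{MPT} vanish: the right side trivially, and the left because $r_0(v)=s_0(v)=0$ (by the convergence $\ZZ_\D\to 0$ as $\D\to 0$ in Proposition~\ref{curve}), so $M(v,0,0)=\G_0(v)$ lies on the curve of fixed points of the unperturbed field from Theorem~B.1 of \citen{CMM13}; hence the expansion indeed starts at order $\D$, and only the first $\D$-derivative is relevant.

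The left-hand side of \eqref{MPT} carries a double $\D$-dependence: through $X_\D$ itself and through the parametrisation $(r_\D(v),s_\D(v))$ of the curve $\ZZ_\D$. Applying the chain rule at $\D=0$ produces two terms,
\begin{align*}
\frac{d}{d\D}\bigg|_{\D=0}\langle\b_2(v),X_\D(M(v,r_\D(v),s_\D(v)))\rangle
 &= \langle\b_2(v),\partial_\D X_0(\G_0(v))\rangle \\
 &\quad + \langle\b_2(v),DX_0(\G_0(v))\bigl[r_\D'(0)\,e_1(v)+s_\D'(0)\,e_3(v)\bigr]\rangle,
\end{align*}
where the smoothness of $\D\mapsto(r_\D,s_\D)$ required to justify differentiation is precisely what Proposition~\ref{curve} supplies.

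The key observation is that the second inner product vanishes identically: $\b_2(v)$ is a left eigenvector of $DX_0(\G_0(v))$ with eigenvalue $0$, so $\langle\b_2(v),DX_0(\G_0(v))\,w\rangle=0$ for every $w\in\R^3$, and in particular the unknown coefficients $r'_\D(0),s'_\D(0)$ drop out. Equating the remaining contribution with the derivative of the right-hand side of \eqref{MPT} gives exactly $g(v)=\langle\b_2(v),\partial_\D X_0(\G_0(v))\rangle$, as desired. The only step I would consider even mildly at-risk is bookkeeping, namely keeping the two sources of $\D$-dependence separate and using the right regularity of $\ZZ_\D$; once that is in place, the annihilation property of the left null vector $\b_2$ makes the tangent direction to $\ZZ_\D$ invisible, which is the structural reason the formula for $g$ only involves the ``external'' $\D$-derivative of the vector field evaluated on the unperturbed line of fixed points.
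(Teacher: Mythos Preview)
Your argument is correct. The paper does not give its own proof of this lemma: it is stated verbatim as Lemma~B.3 from \citen{CMM13} and used as a black box (the paper only computes $g(v)$ explicitly afterwards via \eqref{easycal1}--\eqref{easycal2}). Your approach---differentiating \eqref{MPT} in $\D$ at $\D=0$, splitting the $\D$-dependence into the explicit one through $X_\D$ and the implicit one through $\ZZ_\D$, and then using that $\b_2(v)$ is a left eigenvector of $DX_0(\G_0(v))$ for the eigenvalue $0$ to annihilate the second contribution---is the natural proof and is presumably what is done in \citen{CMM13}. The regularity of $\D\mapsto(r_\D,s_\D)$ that you invoke is exactly what Proposition~\ref{curve} provides, and the identification $M(v,0,0)=\G_0(v)$ is immediate from the definition of the parametrisation $M$.
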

Then they ensure that these function has only two zeros, which implies, because of the representation \eqref{MPT},
 for $|\D|\neq0$ small enough, that the perturbed vector field $X_\D$ has only the two  known fixed points 
 $\mathfrak n_{aa}$ and $\mathfrak n_{AA}$.
\begin{theorem}[Theorem B.4 in \citen{CMM13}]
\label{B4}
Assume the function
\begin{align}
\label{g}
g(v)=\langle\b_2(v),\partial_\D X_0(\G_0(v))\rangle,
\end{align}
satisfies $\frac{dg}{dv}(\pm\bar n_A)\neq0$ and does not vanish in $(-\bar n_A,\bar n_A)$. Then, for $|\D|\neq0$ small enough, the vector field $X_\D$ has only two zeros in a tubular neighbourhood of $\G_0$. 
These zeros are $(n_{aa}(\D), 0, 0)$ and $(0, 0, n_{AA}(\D))$,
 with $n_{aa}(\D)$ and $n_{AA}(\D)$ regular near $\D=0$ and $n_{aa}(0)=n_{AA}(0)=\bar n_A$.
\end{theorem}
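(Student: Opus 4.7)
The plan is to reduce the search for zeros of $X_\D$ in the tube $\VV$ to a single scalar equation along the curve $\ZZ_\D$ produced by Proposition \ref{curve}, and then to analyse that equation via the Malgrange representation \eqref{MPT} combined with the implicit function theorem. By \eqref{EVb}, the family $\{\b_1,\b_2,\b_3\}$ is dual to $\{e_1,e_2,e_3\}$, so the vanishing of $X_\D$ at a point of $\VV$ is equivalent to the simultaneous vanishing of the three scalar pairings $\langle \b_i, X_\D\rangle$. Proposition \ref{curve} precisely ensures that the pairings with $\b_1$ and $\b_3$ vanish along $\ZZ_\D$, while \eqref{MPT} expresses the remaining pairing as $\D^2 h(\D,v)+\D g(v)$. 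For $\D\neq 0$, dividing out one factor of $\D$, the fixed points of $X_\D$ in $\VV$ are exactly the points $M(v,r_\D(v),s_\D(v))$ with $v$ a root of
\begin{equation*}
F(\D,v):=\D h(\D,v)+g(v)=0,\qquad v\in[-\bar n_A-\d,\bar n_A+\d].
\end{equation*}

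Next I would apply the implicit function theorem to $F$ at each of the two base points $(0,\pm\bar n_A)$. By the standing hypotheses, $F(0,\pm\bar n_A)=g(\pm\bar n_A)=0$ and $\partial_v F(0,\pm\bar n_A)=g'(\pm\bar n_A)\neq 0$, so the IFT produces smooth branches $v_\pm(\D)$ with $v_\pm(0)=\pm\bar n_A$ and $F(\D,v_\pm(\D))=0$ for all sufficiently small $|\D|$. To exclude any further root I would use a compactness argument: outside small neighbourhoods $I_\pm$ of $\pm\bar n_A$, $|g(v)|$ is bounded below by some $c>0$ on the compact set $[-\bar n_A-\d,\bar n_A+\d]\setminus(I_+\cup I_-)$ because $g$ is continuous and does not vanish there; shrinking $|\D|$ then forces $|\D h(\D,v)|<c$ uniformly, so $F(\D,\cdot)$ cannot vanish away from $I_+\cup I_-$. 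Combined with the local uniqueness from the IFT, this shows $F(\D,\cdot)$ has exactly the two roots $v_\pm(\D)$.

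It remains to identify the two branches with the monomorphic equilibria $(n_{aa}(\D),0,0)$ and $(0,0,n_{AA}(\D))$ named in the statement. Inspecting \eqref{diffeq1}--\eqref{diffeq3} shows that the coordinate faces $\{y=z=0\}$ and $\{x=y=0\}$ are invariant under the flow of $X_\D$, and on each face the system reduces to a one-dimensional logistic equation with unique positive equilibrium $(f-D-\D)/c$ and $(f-D)/c$ respectively. Each of these equilibria is smooth in $\D$, reduces to $\bar n_A$ as $\D\to 0$, lies in $\VV$ for $|\D|$ small, and is a genuine zero of $X_\D$; by the uniqueness just established they must coincide with the abstract branches produced by the IFT, which gives the desired regularity of $n_{aa}(\D)$ and $n_{AA}(\D)$ and the identification $n_{aa}(0)=n_{AA}(0)=\bar n_A$.

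I expect the main technical obstacle to lie not in the IFT or compactness arguments themselves but in verifying the hypothesis that $g$ does not vanish in $(-\bar n_A,\bar n_A)$ and has nonzero derivative at $\pm\bar n_A$. In the original CMM13 setting this follows from a direct computation, but in the present Mendelian-dominant model the text has already flagged that $g'(\bar n_A)$ in fact vanishes, so an \emph{ad hoc} calculation (the "easy calculation" alluded to in the paragraph preceding the statement) is needed to show that $\bar n_A$ is nonetheless an isolated zero of $g$ of some higher, even order, after which a variant of the implicit function theorem for degenerate simple roots (or an application of the Malgrange preparation theorem to a higher order) still delivers a unique smooth branch $v_+(\D)$. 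Once that input is in place the rest of the proof runs exactly as outlined above.
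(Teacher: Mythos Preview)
Your first three paragraphs give a correct proof of Theorem~\ref{B4} under its stated hypotheses, and the strategy---reduce to the single scalar equation along $\ZZ_\D$, divide by $\D$ in \eqref{MPT}, apply the implicit function theorem at $(0,\pm\bar n_A)$, and use compactness plus the nonvanishing of $g$ on $(-\bar n_A,\bar n_A)$ to rule out extra roots---is exactly the approach the paper sketches as coming from \citen{CMM13}. The paper itself does not reprove Theorem~\ref{B4}; it only cites it and then explains how to salvage its conclusion when the hypothesis $g'(\bar n_A)\neq 0$ fails.

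Your final paragraph, however, conflates two distinct tasks. Theorem~\ref{B4} \emph{assumes} that $g'(\pm\bar n_A)\neq 0$ and that $g$ has no interior zeros; you do not need to verify these hypotheses to prove the theorem as stated. The discussion in the paper about $g'(\bar n_A)=0$ concerns a separate issue: in the dominant-$A$ model the hypothesis of Theorem~\ref{B4} is \emph{not} satisfied, so the paper must argue directly from the explicit formula \eqref{easycal2} that $\bar n_A$ is still an isolated zero of $g$ (it is a double zero), and hence that the conclusion of Theorem~\ref{B4} persists. That is an application-level workaround, not part of the proof of Theorem~\ref{B4} itself. Drop your last paragraph and the proof is complete.
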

While the hypothesis $\frac{dg}{dv}(\pm\bar n_A)\neq0$  does not hold here, the conclusion of 
Theorem \ref{B4} remains true. Namely, we have 
from \eqref{XD} that 
\begin{equation}
\label{easycal1}
\partial_\D X_\D(x,y,z)=
\begin{pmatrix}
-x\\0\\0
\end{pmatrix},
\end{equation}
and thus
\begin{align}
\label{easycal2}
g(v)=\langle\b_2(v),\partial_\Delta X_0(\G_0(v))\rangle=
\left\langle
\begin{pmatrix}
-\frac{v+\bar n_A}{\bar n_A}\\
-\frac{v}{\bar n_A}\\
-\frac{v-\bar n_A}{\bar n_A}
\end{pmatrix},
\begin{pmatrix}
-\frac{(v-\bar n_A)^2}{4\bar n_A}\\0\\0
\end{pmatrix}
\right\rangle
=\frac{(v+\bar n_A)(v-\bar n_A)^2}{4\bar n_A^2}.
\end{align}
Obviously, $g(\pm\bar n_A)=0$ and $g$  has no other zeros, in particular,  it does not vanish in $(-\bar n_A,\bar n_A)$.  
Hence, it follows from the representation \eqv(MPT) that there is a $\delta>0$ such that, for $\Delta$ small enough,
\\  $\langle\beta_2(v), X_\D(M(v,r_\D(v),s_\D(v)))\rangle$ has only two zeros in $v\in[-\delta-\bar n_A,\bar n_A+\delta]$. 
From Corollary \ref{fp} we get the existence of these two fixed points, which have to be the points $\mathfrak n_{aa}$ and
 $\mathfrak n_{AA}$.
Finally, we need the following lemma, which is analogous to Theorem C.1 in \citen{CMM13}.
\begin{lemma}
\hspace{-0pt}
\begin{itemize}
\item[(a)] The local stable manifold of the unstable fixed point $\mathfrak n_{aa}=(\bar n_a, 0, 0)$ intersects the closed 
positive quadrant only along the line $y=z=0$.
\item[(b)] The local unstable manifold is contained in the curve $\G_\D$.
\end{itemize}
\end{lemma}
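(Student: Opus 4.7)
The plan is to exploit the block-triangular form of $DX_\D(\mathfrak n_{aa})$ displayed in \eqref{DXaa} and compute its three eigenvectors explicitly. For part (a), back-substitution gives the stable eigenvectors: $\lambda_1=-(f-D-\D)$ has eigenvector $(1,0,0)$ aligned with the $x$-axis, and $\lambda_3=-(f-\D)$ has eigenvector proportional to $(D+\D,-2D,D)$. The two-dimensional stable subspace therefore consists of vectors of the form $(a+(D+\D)t,-2Dt,Dt)$, and imposing $y\geq 0$ and $z\geq 0$ simultaneously forces $t=0$. Hence the stable subspace meets the closed positive quadrant exactly along $\{y=z=0\}$. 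Now $\{y=z=0\}$ is invariant under \eqref{diffeq1}--\eqref{diffeq3} (both $\dot y$ and $\dot z$ vanish there), and the restricted one-dimensional Lotka--Volterra dynamics has $\bar n_a$ as its stable attractor, so this line is contained in the local stable manifold. The stable manifold theorem realises the local stable manifold as a graph over the stable subspace with quadratic error; combined with the tangential transversality just established, this forces its intersection with the positive quadrant to coincide with the $x$-axis in a neighbourhood of $\mathfrak n_{aa}$.

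For part (b), I would compute the unstable eigenvector corresponding to $\lambda_2=\D$ by solving $(DX_\D(\mathfrak n_{aa})-\D I)v=0$, which yields $v$ proportional to $(-(f-D-\D),\,f-D,\,0)$. At $\D=0$ this reduces to $(-1,1,0)$, precisely $\G_0'(-\bar n_A)$, the tangent to the unperturbed curve of fixed points at $\mathfrak n_{aa}$. Theorem \ref{C2}, applied through Hirsch's invariant manifold theorem from \citen{H77}, guarantees that $\G_\D$ is a $C^r$ perturbation of $\G_0$ that is invariant under $X_\D$, so its tangent at $\mathfrak n_{aa}$ is a small perturbation of $(-1,1,0)$. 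The key observation is that a regular invariant curve through a fixed point must have its tangent in an eigenspace of the linearisation: parametrising $\G_\D$ by $\gamma(s)$ with $\gamma(0)=\mathfrak n_{aa}$, invariance yields $X_\D(\gamma(s))=\lambda(s)\gamma'(s)$ for a scalar function $\lambda$, and differentiating at $s=0$ (using $\lambda(0)=0$ since $X_\D(\mathfrak n_{aa})=0$ and $\gamma'(0)\neq 0$) gives $DX_\D(\mathfrak n_{aa})\gamma'(0)=\lambda'(0)\gamma'(0)$. Continuity of eigenvalues in $\D$ identifies $\lambda'(0)$ with $\lambda_2=\D$, so $\gamma'(0)$ is the unstable eigenvector. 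Uniqueness of the one-dimensional local unstable manifold, which is tangent to the unique unstable direction, then forces $\G_\D$ to coincide with it in a neighbourhood of $\mathfrak n_{aa}$.

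The only delicate point is the eigenvalue-continuity step in (b): one needs the $C^r$ regularity of $\G_\D$ afforded by Hirsch's persistence theorem, as cited through Theorem \ref{C2}, to legitimately differentiate the invariance relation and identify the tangent with an eigenvector. Everything else reduces to routine eigenvalue arithmetic on the upper-triangular matrix in \eqref{DXaa} and the invariance of the coordinate axis $\{y=z=0\}$.
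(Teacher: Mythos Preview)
Your proof is correct and follows essentially the same route as the paper: compute the eigenvectors of the upper-triangular matrix \eqref{DXaa}, show the stable subspace meets the closed positive quadrant only along the $x$-axis and transfer this to the stable manifold, and for (b) identify the tangent to $\G_\D$ at $\mathfrak n_{aa}$ with the unstable eigendirection and invoke uniqueness.

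Two minor differences are worth noting. For (a), the paper simply asserts that the conclusion for the stable subspace transfers to $W^{s,loc}_{aa}$ ``since it is a piece of the subspace $\EE^s_{aa}(\D)$'', which is somewhat loose; your use of the graph representation together with the observation that the $x$-axis is genuinely invariant (hence $\psi(s_1,0)\equiv 0$ in the graph parametrisation) makes the transfer rigorous. For (b), the paper argues by first placing $W^{u,loc}_{aa}$ in the positive quadrant and then invoking the uniqueness of the invariant curve $\G_\D$ from Hirsch's theorem directly, whereas you instead differentiate the invariance relation $X_\D(\gamma(s))=\lambda(s)\gamma'(s)$ to show $\G_\D$ is tangent to the unstable direction and then invoke uniqueness of the local unstable manifold. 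Both arguments reach the same conclusion; yours is self-contained and does not depend on the precise formulation of the uniqueness statement in \citen{H77}, while the paper's is shorter once that statement is granted.
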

\begin{proof}
We start by proving (a). From Theorem \ref{dettheo} we get the hyperbolicity, thus we can apply Theorem 4.1 in \citen{H77}. The Jacobian matrix $DX_\D((\bar n_a, 0, 0))$ (cf. \eqref{DXaa}) has the three eigenvalues $\lambda_1=-(f-D-\D), \lambda_2=\D$ and $\lambda_3=-(f-\D)$ with corresponding eigenvectors
\begin{align}
EV(\lambda_1)&=
\begin{pmatrix}
1\\0\\0
\end{pmatrix}
=e_1(-\bar n_A)\\
EV(\lambda_2)&=
\begin{pmatrix}
-1\\1\\0
\end{pmatrix}
+\frac{\D}{f-D}
\begin{pmatrix}
1\\0\\0
\end{pmatrix}
=e_2(-\bar n_A)+\OO(\D)\\
EV(\lambda_3)&=\frac{1}{2\bar n_A}
\begin{pmatrix}
1\\-2\\1
\end{pmatrix}
+\frac{\D}{2\bar n_A(f-D)}
\begin{pmatrix}
1\\0\\0
\end{pmatrix}
=e_3(-\bar n_A)+\OO(\D).
\end{align}
Let $\EE_{aa}^s(\D)$ the two dimensional affine stable subspace spanned by the eigenvectors 
$EV(\lambda_1)$ and $EV(\lambda_3)$ 
with origin in $\mathfrak n_{aa}$:
\begin{equation}
\label{E}
\EE_{aa}^s(\D)=\{x\in\R^3|x=(\bar n_a, 0, 0)^t+sEV(\lambda_1)+tEV(\lambda_3), \forall s,t\in\R\}.
\end{equation}
 Again, by the lamination and permanence condition in Theorem 4.1 in \citen{H77}, 
 we get the existence of a stable manifold 
 $W_{aa}^{s,loc}$ and an unstable manifold $W_{aa}^{u,loc}$ of the fixed point $\mathfrak n_{aa}$. 
 The local stable manifold $W_{aa}^{s,loc}$ is a piece of regular manifold tangent in $\mathfrak n_{aa}$ to the subspace 
 $\EE_{aa}^s(\D)$. We see that the $x$-axis is invariant for $X_\D$ and is contained in $W_{aa}^{s,loc}$. From \eqref{E}, 
 we get that $\EE_{aa}^s(\D)$ intersects the closed positive quadrant only along the line $y=z=0$. Hence, 
 the same is true for $W_{aa}^{s,loc}$ since it is a piece of the subspace $\EE_{aa}^s(\D)$. This shows (a).
To show (b), we show that the local unstable manifold $W_{aa}^{u,loc}$ is contained in the closed positive quadrant. 
This follows because $W_{aa}^{u,loc}$ is tangent to the linear unstable direction in $EV(\lambda_2)$ in $\mathfrak n_{aa}$, 
which points into the positive quadrant.
From Theorem 4.1 in \citen{H77} we get that the invariant curve, $\G_\D$, is unique, thus $W_{aa}^{u,loc}\subset\G_\D$
and (b) follows by the invariance of the positive quadrant under the flow.
\end{proof}
The preceding steps conclude the proof of Corollary \ref{fp}. 
\end{proof}
\subsection{Proof of the main theorems in Section \ref{sectionmain}}
\label{proofmain}
We carry out the proofs of the main theorems (Section \ref{sectionmain}) in full detail.\\
The mutant process $A(t)=2n_{AA}(t)+n_{aA}(t)$ jumps up (resp. down) by rate $b_A$ (resp. $d_A$) given by:
\begin{align}
b_A&=\frac{2fK}{\S(t)}\left(\left(n_{AA}(t)+\sfrac{1}{2}n_{aA}(t)\right)^2+\left(n_{AA}(t)+\sfrac{1}{2}n_{aA}(t)\right)\left(n_{aa}(t)+\sfrac{1}{2}n_{aA}(t)\right)\right)\nonumber\\
&=fK(2n_{AA}(t)+n_{aA}(t)) =A(t)Kf,\\
d_A&=2n_{AA}(t)K(D+c\S(t))+n_{aA}(t)K(D+c\S(t))=A(t)K(D+c\S(t)).
\end{align}
\subsubsection{Phase 1: Fixation of the mutant population}
Recall the stopping times \eqref{stoppaA+} and \eqref{stoppaA-}, when the mutant population $A(t)$ increased to a $\d$-level and its stopping time of extinction. We show 
Theorem \ref{fix}:
\begin{proof}[Proof of Theorem \ref{fix}]
We start the population process with a monomorphic $aa$-population which stays in a $\d/2 K$-neighbourhood of its equilibrium $\bar n_aK$ and one mutant with genotype $aA$, i.e. $\t_0=0$. Because of \eqref{mu}, there will be no further mutation in the process.\\
Proposition \ref{propD} states that if the resident population $n_{aa}$ is in a $\d/2$-neighbourhood of its equilibrium $\bar n_a$ then $n_{aa}$ will stay in a $\d$-neighbourhood an exponentially long time as long as the mutant population is less than $\d$. Hence we get that, as long as the mutant population is smaller than $\d$, the time the process $n_{aa}(t)$ needs to exit from its domain $\bar n_a$ is bigger than $\exp(VK)$ with probability converging to 1, for some $V>0$ (cf. \citen{C06}) and the dynamics of the mutant population are negligible for $n_{aa}(t)$.\\
With this knowledge we analyse the fate of the mutants for $t<\tau_\d^{mut}\land\tau_0^{mut}\land e^{VK}$. We use the comparison results of birth and death processes (Theorem 2 in \citen{C06}) to estimate the mutant process from below and above. 
We denote by $\preccurlyeq$ the following stochastic dominant relation: if $\bf{P_1}$ and $\bf P_2$ are the laws of  $\R$-valued processes, we will write $\bf{P_1}\preccurlyeq\bf{P_2}$ if we can construct on the same probability space $(\Omega,\FF,\bf {P})$ two processes $X^1$ and $X^2$ such that the law of the processes is $\LL(X^i)=\bf{P_i}$, $i\in\{1,2\}$ and for all $t>0, \o\in\Omega: X_t^1(\o)\leq X_t^2(\o)$.\\
First we construct a process $A_l(t)\preccurlyeq A(t)$ which is the minorising process of the mutant process. This process has the birth and death rates:
\begin{equation}
b_l(t)=A_l(t)Kf,\qquad
d_l(t)
=A_l(t)K[D+c(\bar n_a+(M+1)\d)].
\end{equation}
$A(t)\preccurlyeq A_u(t)$ is the majorising process with rates:
\be
b_u(t)=A_u(t)fK,\qquad
d_u(t)=A_u(t)K[D+c(\bar n_a-M\d)].
\ee
We define the stopping times 
\begin{align}
T_{n/K}^{l}&\equiv\inf\left\{t\geq0:A_{l}(t)=\frac{n}{K}\right\},\qquad T_{n/K}^{u}\equiv\inf\left\{t\geq0:A_{u}(t)=\frac{n}{K}\right\}\\
\T_a&\equiv\inf\left\{t\geq0: |n_{aa}(t)-\bar n_a|>\d\right\},
\end{align}
which are the first times that the processes $A_l$, resp. $A_u$ reach the level $\frac{n}{K}$ and the 
exit time of $n_{aa}(t)$ from the domain $[\bar n_a-\d,\bar n_a+\d]$.

Note that both processes $A_l(t)$ and $A_u(t)$ are super-critical. 
In the following we use the results for super-critical branching processes proven in \citen{C06}:
\begin{lemma}[Theorem 4 (b) in \citen{C06}]
\label{theorem4b}
Let $b,d>0$. For any $K\geq1$ and any $z\in\N/K$, let $\P^K_z$ the law of the $\N/K$-valued Markov linear birth and death process 
$\left(\o_t,t\geq0\right)$ with birth and death rates $b$ and $d$ and initial state $z$. Define, for any $\rho\in\R$, on $\mathbb{D}(\R_+,\R)$, the stopping time
\begin{align}
T_\rho=\inf\{t\geq0: \o_t=\rho\}.
\end{align}
Let $(t_K)_{K\geq1}$ be a sequence of positive numbers such that $\ln K\ll t_K$.
If $b>d$, for any $\e>0$,
\begin{align}
&(a)\quad \lim_{K\rightarrow\infty}\P_{\frac1K}^K(T_0\leq t_K\land T_{\lceil\e K\rceil/K})=\frac db,\\
&(b)\quad \lim_{K\rightarrow\infty}\P_{\frac1K}^K( T_{\lceil\e K\rceil/K}\leq t_K)=1-\frac db.
\end{align}
\end{lemma}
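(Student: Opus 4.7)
The plan is to reduce to the $\N$-valued linear birth-and-death process $N_t = K\omega_t$ starting from $N_0=1$, with per-individual birth rate $b$ and per-individual death rate $d$, and then to invoke classical results on supercritical branching processes in the regime $b>d$. Denote the extinction event by $\mathcal A = \{T_0 < \infty\}$. A standard generating-function computation (equivalently, the smallest fixed point of the offspring generating function of the embedded geometric Galton--Watson chain) gives $\P(\mathcal A) = d/b$. Since the two limits claimed in (a) and (b) are exactly $d/b$ and $1-d/b$, the task reduces to showing that the timing condition $\{\,\cdot\leq t_K\}$ becomes asymptotically automatic on the respective events as $K\to\infty$.

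For part (b), the key tool is the Kesten--Stigum theorem applied to $(N_t)$: on $\mathcal A^c$, the martingale $e^{-(b-d)t}N_t$ converges almost surely to a strictly positive random variable $W$. Consequently, on $\mathcal A^c$, the hitting time satisfies $T_{\lceil \e K\rceil} = (b-d)^{-1}\ln K + O(1)$ almost surely as $K\to\infty$. Since the assumption $\ln K \ll t_K$ dominates this $\ln K$ scale, one obtains $\P(T_{\lceil\e K\rceil}\leq t_K) \to \P(\mathcal A^c) = 1-d/b$, which is (b).

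For part (a), I would use the classical Feller formula for the linear birth-and-death process,
\begin{equation*}
\P_1(N_t = 0) \;=\; \frac{d\bigl(e^{(b-d)t}-1\bigr)}{b\,e^{(b-d)t}-d},
\end{equation*}
which tends to $d/b$ as $t\to\infty$; applied at $t=t_K\to\infty$ this already gives $\P(T_0\leq t_K) \to d/b$. On the complementary event $\mathcal A^c$, the process grows to infinity and therefore, for $K$ large, certainly reaches $\lceil\e K\rceil$ before hitting $0$, so $\{T_0 \leq T_{\lceil\e K\rceil/K}\} \subseteq \mathcal A$ up to a null set. Combining these two observations,
\begin{equation*}
\P\bigl(T_0 \leq t_K \wedge T_{\lceil\e K\rceil/K}\bigr) \;=\; \P(T_0 \leq t_K) - o(1) \;\longrightarrow\; d/b,
\end{equation*}
which is (a).

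The only real obstacle is turning the almost-sure Kesten--Stigum statement used in (b) into a quantitative estimate uniform in $K$ for the hitting time of level $\lceil\e K\rceil$. The cleanest route is to bypass Kesten--Stigum altogether and work directly with the Feller generating function $\E_1[s^{N_t}]$, which in the linear birth-and-death case has an explicit closed form; from it one extracts both (i) exponential tails for the extinction time on $\mathcal A$ and (ii) the tail of $W$ on $\mathcal A^c$, and each of the two convergences above follows by a routine computation. No finer martingale or large-deviations input is required, so the result is essentially a soft consequence of the explicit one-dimensional structure.
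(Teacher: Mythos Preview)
The paper does not prove this lemma at all: it is quoted verbatim as Theorem~4(b) from \citen{C06} and then immediately applied, with no argument supplied. So there is no ``paper's own proof'' to compare against; you have reconstructed a proof of a result the authors simply import.

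That said, your argument is correct and is essentially the standard one. The reduction to the unscaled process $N_t=K\omega_t$ starting from $1$ is right, the extinction probability $d/b$ is classical, and your treatment of~(a) via the explicit Feller formula for $\P_1(N_t=0)$ together with the embedded gambler's-ruin estimate (which gives $\P(T_{\lceil\e K\rceil}<T_0,\ \mathcal A)\to 0$) is clean. For~(b) you correctly identify the one genuine technical point: the almost-sure asymptotic $T_{\lceil\e K\rceil}\sim (b-d)^{-1}\ln K$ on $\mathcal A^c$ coming from the martingale limit $W>0$ is path-dependent, and one needs a probability statement uniform in $K$. Your proposed fix---working directly with the explicit generating function $\E_1[s^{N_t}]$, which for the linear birth--death process is known in closed form---is exactly how Champagnat handles it, and it does give both the exponential decay of $\P(\mathcal A,\ T_0>t)$ and the needed lower tail for $N_t$ on $\mathcal A^c$ without any appeal to Kesten--Stigum. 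Nothing is missing.
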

With respect to Theorem \ref{main2}, we prove the theorem for arbitrary $\mu_K$. From\eqref{muttime} we know that the next mutation occurs with high probability not before a time $\frac\rho{K\mu_K}$. Then
\begin{align}
\label{up}
\P\hspace{-2pt}_{\frac{1}{K}}\hspace{-3pt}\left[T_{\lceil\d K\rceil/K}^l<T_0^l\land\sfrac{\r}{K\mu_K}\land\T_a\right]\leq\P\hspace{-2pt}_{\frac{1}{K}}\hspace{-3pt}\left[\t_{\d }^{mut}<\t_0^{mut}\land\sfrac{\r}{K\mu_K}\land\T_a\right]
\leq\P\hspace{-2pt}_{\frac{1}{K}}\hspace{-3pt}\left[T_{\lceil\d K\rceil/K}^u<T_0^u\land\sfrac{\r}{K\mu_K}\land\T_a\right].
\end{align}
Using Proposition \ref{propD}, there exists $V>0$ such that, for sufficiently large $K$,
\begin{align}
\P_{\frac1K}[\t_1\land e^{VK}<\T_a]\geq1-\d.
\end{align}
We can estimate, for $K$ large enough,
\begin{align}
\P\hspace{-2pt}_{\frac{1}{K}}\hspace{-3pt}\left[T_{\lceil\d K\rceil/K}^l<T_0^l\land\sfrac{\r}{K\mu_K}\land\T_a\right]
&\geq\P\hspace{-2pt}_{\frac{1}{K}}\hspace{-3pt}\left[T_{\lceil\d K\rceil/K}^l<T_0^l\land\sfrac{\r}{K\mu_K}\land e^{VK}\right]-\d\nonumber\\
&=\P\hspace{-2pt}_{\frac{1}{K}}\hspace{-3pt}\left[T_{\lceil\d K\rceil/K}^l<T_0^l\land\sfrac{\r}{K\mu_K}\right]-\d\nonumber\\
&\geq\P\hspace{-2pt}_{\frac{1}{K}}\hspace{-3pt}\left[\left\{T_{\lceil\d K\rceil/K}^l<T_0^l\land\sfrac{\r}{K\mu_K}\right\}\cap\left\{T_0^l\leq\sfrac{\r}{K\mu_K}\right\}\right]-\d\nonumber\\
&=\P\hspace{-2pt}_{\frac{1}{K}}\hspace{-3pt}\left[\left\{T_{\lceil\d K\rceil/K}^l<T_0^l\right\}\cap\left\{T_0^l\leq\sfrac{\r}{K\mu_K}\right\}\right]-\d\nonumber\\
&\geq \P\hspace{-2pt}_{\frac{1}{K}}\hspace{-3pt}\left[T_{\lceil\d K\rceil/K}^l<T_0^l\right] -\P_{\frac 1K}\left[T_0^l>\sfrac{\r}{K\mu_K}\right]-\d\nonumber\\
\end{align}
The extinction time for a binary branching process when $b\neq d$ is given by (cf. page 109 in \citen{AN11}) 
\begin{align}
\P_n(T_0\leq t)=\left(\frac{d(1-e^{-(b-d)t})}{b-de^{-(b-d)t}}\right)^n,
\end{align}
for any $t\geq0$ and $n\in\N$. Under our condition on $\mu_K$ \eqref{mu} in Theorem \ref{main2}, this implies that 
\be
\lim_{K\rightarrow\infty}\P\hspace{-2pt}_{\frac{1}{K}}\left[T_0^l\leq\sfrac{\r}{K\mu_K}\right]=0.
\ee
 Together with Lemma \ref{theorem4b}, we get
\begin{align} \label{immer.1}
\lim_{K\rightarrow\infty}\P\hspace{-2pt}_{\frac{1}{K}}\hspace{-3pt}\left[\t_{\d }^{mut}<\t_0^{mut}\land\sfrac{\r}{K\mu_K}\land\T_a\right]
&\geq 1-\frac{d_l}{b_l}-\d
=\frac{\D}{f}-\left(\frac{c(M+1)}{f}+1\right)\d.
\end{align}
If we next consider the upper bound of \eqref{up}, we see that
\begin{align}
P\hspace{-2pt}_{\frac{1}{K}}\hspace{-3pt}\left[T_{\lceil\d K\rceil/K}^u<T_0^u\land\sfrac{\r}{K\mu_K}\land\T_a\right]\leq\P\hspace{-2pt}_{\frac{1}{K}}\hspace{-3pt}\left[T_{\lceil\d K\rceil/K}^u<T_0^u\land\sfrac{\r}{K\mu_K}\right].
\end{align}
Similar as in \eqref{immer.1}
\be
\lim_{K\rightarrow\infty}\P_{\frac{1}{K}}\hspace{-3pt}\left[\t_{\d }^{mut}<\t_0^{mut}\land\sfrac{\r}{K\mu_K}\land\T_a\right]
=\frac{\D}{f}+\frac{cM\d}{f}.
\ee
Now, we let $\d\rightarrow0$ and get the desired result.
\end{proof}
\subsubsection{Phase 2: Invasion of the mutant}
If the mutant invades in the resident $aa$-population then the first phase ends with a macroscopic mutant population. Especially, we know that the $aA$-population is of order $\d$ due to its advantage in recombination in contrast to the $AA$-population. Applying the Large Population Approximation (Theorem \ref{LPA}, \citen{F_MA}) with this initial condition, we get that the behaviour of the process is close to the solution of the deterministic system \eqref{det}, when $K$ tends to infinity. We approximate the population process by the solution of the dynamical system \eqref{det}. Result \ref{inva} in Proposition \ref{det} is known (see \citen{CM11} or \citen{CMM13}). We use this result only until the $aA$-population decreases to an $\e$-level. 
\subsubsection{Phase 3: Survival of the recessive $a$-Allele}
This phase starts as soon as the $aA$-population hits the $\e$ value. At this time we restart the population-process, that means we set the time to zero. 
The analysis of the deterministic dynamical system up to this point shows that we get the following initial conditions:
\begin{align}
&n_{aA}(0)=\e,\\
&n_{aa}(0)\leq\frac {f}{4\bar n_A(f+\D)}\e^2+M_{aa}\e^{2+\a},\\
&|n_{AA}(0)-(\bar n_A-\e)|\leq M_{AA}\e^2,\\
&\left|\S(0)-\left(\bar n_A-\sfrac \Delta{c\bar n_A}\g\e^2\right)\right|\leq M_\S\e^{2+\a},
\end{align}
where $M_i, i\in\{aa,AA,\S\}$ are constants.\\
In the following stopping times denoted by $\t$ are always stopping times on rescaled processes, whereas stopping times denoted by $T$ are the stopping times of the corresponding non-rescaled processes.\\
We transform the birth and death rates of the processes
$N_{aa}(t), N_{aA}(t), N_{AA}(t)$ and the sum process $\S(t)K$
in such a way that we can consider them as the birth and death rates of  linear birth-death-immigration processes:
\begin{align}
\label{raten1}
b_{\S}(t)&=f\S(t)K,\nonumber\\
d_{\S}(t)&=D\S(t)K+c\S^2(t)K+\D N_{aa}(t),\\[6pt]
\label{raten2}
b_{aa}(t)&=fN_{aa}(t)\left(1-\frac{n_{AA}(t)}{\S(t)}\right)+\frac{fK}{4\S(t)}n^2_{aA}(t),\nonumber\\
d_{aa}(t)&=N_{aa}(t)(D+\D+c\S(t)),\\[6pt]
\label{raten3}
b_{aA}(t)&=fN_{aA}(t)\left(1-\frac{n_{aA}(t)}{2\S(t)}\right)+2fN_{aa}(t)\frac{n_{AA}(t)}{\S(t)},\nonumber\\
d_{aA}(t)&=N_{aA}(t)(D+c\S(t)),\\[6pt]
\label{raten4}
b_{AA}(t)&=fN_{AA}(t)\left(1-\frac{n_{aa}(t)}{\S(t)}\right)+\frac{fK}{4\S(t)}n^2_{aA}(t),\nonumber\\
d_{AA}(t)&=N_{AA}(t)(D+c\S(t)).
\end{align}
We proceed as described in the outline.
Recall the settings for the steps \eqref{defx}, \eqref{stoppaA+},\eqref{stoppaA-} and \eqref{ubi}.
\\[6pt]
\paragraph{Step 1:} 
We prove the upper bound of the sum process (Proposition \ref{upSum}). For this we construct a process, the difference process, which records the drift from the sum process away from the upper bound $\bar n_AK$.
\begin{proof}[Proof of Proposition \eqref{upSum}]
The difference process $X_t^{u\S}$ between $\S(t)K$ and $\bar n_AK$ is a branching process with the same rates as $\S(t)K$. Set
	\bea
		X_t^{u\S}&=&\S(t)K-\bar n_AK,\nonumber\\
		T_0^{X,u\S}&\equiv&\inf\{t\geq0: X_t^{u\S}=0\},\nonumber\\
		T_{\a,M_\S}^{X,u\S}&\equiv&\inf\{t\geq0: X_t^{u\S}\geq3M_{\S}(x^{2i}\e^2)^{1+\a}K\}.
	\eea
This is a process in continuous time. For the following we need the discrete process $Y_n^{u\S}$ associated to $X_t^{u\S}$. To obtain this process we introduce a sequence of stopping times $\vartheta_n$ which records the times, when $X_t^{u\S}$ makes a jump. Formally, $\vartheta_n$ is the smallest time $t$ such that $X_t^{u\S}\neq X_{s}^{u\S}$, for all $\vartheta_{n-1}\leq s<t$. We set $X_{\vartheta_n}^{u\S}=Y_n^{u\S}$.
This discretisation has probability less than $\frac12$ to make an upward jump:
\begin{lemma}
	For  $\in \N$ such that $\vartheta_n\in \left[\t_{aA}^{i-}, \t_{aA}^{i+}\land\t_{aA}^{(i+1)-}\land e^{VK^{\a}}\right]$ and $1\leq k\leq 3M_\S (x^{2i}\e^2)^{1+\a}K$ there exists a constant $C_0>0$ such that
	\begin{equation}
		\P[Y_{n+1}^{u\S}=k+1|Y_n^{u\S}=k]\leq \frac{1}{2}-C_0kK^{-1}\equiv p_{\S}(k).
	\end{equation}
\end{lemma}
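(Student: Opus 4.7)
The strategy is to compute the up-jump probability of the jump chain directly from the rates \eqref{raten1} and exploit the equilibrium identity $f-D=c\bar n_A$ to isolate a negative drift of order $k/K$.

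Write $\Sigma(\vartheta_n)K=\bar n_AK+k$, i.e.\ $\Sigma(\vartheta_n)=\bar n_A+k/K$. Since $X_t^{u\S}$ jumps exactly when $\S(t)K$ jumps, the up-probability of the discrete chain is
\begin{equation}
\P[Y_{n+1}^{u\S}=k+1\mid Y_n^{u\S}=k]=\frac{b_\S(\vartheta_n)}{b_\S(\vartheta_n)+d_\S(\vartheta_n)}=\frac12+\frac{b_\S-d_\S}{2(b_\S+d_\S)}.
\end{equation}
Using \eqref{raten1} and $\bar n_A=(f-D)/c$, the linear-in-$K$ contributions cancel: $fK\bar n_A-DK\bar n_A-cK\bar n_A^2=0$. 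A direct expansion yields
\begin{equation}
b_\S-d_\S=-c\bar n_A\,k-\frac{ck^2}{K}-\Delta N_{aa}(\vartheta_n)\leq -c\bar n_A\,k,
\end{equation}
where the inequality uses $N_{aa}\geq0$ and $k\geq1$. This is the key cancellation that produces the advertised downward drift of size proportional to $k$.

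For the denominator, observe that on the event $\{\vartheta_n<\t_{\S,M_\S}^{\a}\wedge\t_{aA}^{i+}\wedge\t_{aA}^{(i+1)-}\wedge e^{VK^{\a}}\}$ together with $k\leq 3M_\S(x^{2i}\e^2)^{1+\a}K$, we have $\S(\vartheta_n)\leq \bar n_A+3M_\S(x^{2i}\e^2)^{1+\a}\leq \bar n_A+1$ for $\e,K^{-1}$ small enough. Hence, since $N_{aa}\leq \S(\vartheta_n)K$,
\begin{equation}
b_\S+d_\S\leq (f+D+\Delta)K\S(\vartheta_n)+cK\S(\vartheta_n)^2\leq C_1 K,
\end{equation}
for a constant $C_1$ depending only on $f,D,\Delta,c,\bar n_A$.

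Combining the two bounds,
\begin{equation}
\frac{b_\S-d_\S}{2(b_\S+d_\S)}\leq -\frac{c\bar n_A\,k}{2C_1 K}=-C_0\,kK^{-1},\qquad C_0:=\frac{c\bar n_A}{2C_1}>0,
\end{equation}
which gives the claim $\P[Y_{n+1}^{u\S}=k+1\mid Y_n^{u\S}=k]\leq \tfrac12-C_0kK^{-1}$. The only nontrivial point is making sure the bound on $\S(\vartheta_n)$ is available, and this is exactly what the choice of stopping time $\t_{\S,M_\S}^{\a}\wedge\t_{aA}^{i+}\wedge\t_{aA}^{(i+1)-}\wedge e^{VK^{\a}}$ guarantees; the contribution of $N_{aa}$ to $d_\S$ only strengthens the inequality, so no separate upper bound on $N_{aa}$ is needed here.
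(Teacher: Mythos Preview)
Your proof is correct and follows essentially the same route as the paper: compute the up-jump probability $b_\S/(b_\S+d_\S)$ from the rates \eqref{raten1}, use the equilibrium identity $f-D-c\bar n_A=0$ to cancel the order-$K$ terms in the numerator, and note that the remaining numerator is $-c\bar n_A k$ plus nonpositive corrections while the denominator is $O(K)$. Your presentation is in fact slightly more explicit than the paper's (you write out $b_\S-d_\S$ and give a formula for $C_0$), and your remark that the hypothesis $k\le 3M_\S(x^{2i}\e^2)^{1+\a}K$ is exactly what furnishes the bound $\S(\vartheta_n)\le \bar n_A+O(1)$ needed for the denominator is on point; the appeal to $\t_{\S,M_\S}^{\a}$ is redundant but harmless.
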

\begin{proof}
With the rates \eqref{raten1}, we get by some straightforward computations
	\begin{align}
		\P[Y_{n+1}^{u\S}=k+1|Y_n^{u\S}=k]&=\frac{b_\S}{b_\S+d_\S}=\frac{(\bar n_AK+k)f}{(\bar n_AK+k)[f+D+c(\bar n_A+kK^{-1})]+\D N_{aa}(t)} \nonumber\\
		&=\frac{1}{2}+\frac{\frac{1}{2}f-\frac{1}{2}D-\frac{1}{2}c\bar n_A-\frac{1}{2}ckK^{-1}-\frac12\frac{\D N_{aa}(t)}{\bar n_AK+k}}{f+D+c(\bar n_A+kK^{-1})+\frac{\D N_{aa}(t)}{\bar n_AK+k}}\nonumber\\
		&\leq\frac{1}{2}-C_0kK^{-1}.
	\end{align}
\end{proof}
To obtain a Markov chain we couple the process $Y_n^{u\S}$ to a process $Z_n^{u\S}$ via:
	\begin{align}
	&(1)\quad Z_0^{u\S}=Y_0^{u\S}\lor 0\\
	&(2)\quad \P[Z_{n+1}^{u\S}=k+1|Y_n^{u\S}<Z_n^{u\S}=k]=p_{\S}(k),\\
	&(3)\quad \P[Z_{n+1}^{u\S}=k-1|Y_n^{u\S}<Z_n^{u\S}=k]=1-p_{\S}(k),\\
	&(4)\quad \P[Z_{n+1}^{u\S}=k+1|Y_{n+1}^{u\S}=k+1, Y_n^{u\S}=Z_n^{u\S}=k]=1,\\
	&(5)\quad \P[Z_{n+1}^{u\S}=k+1|Y_{n+1}^{u\S}=k-1, Y_n^{u\S}=Z_n^{u\S}=k]=\frac{p_{\S}(k)-\P[Y_{n+1}^{u\S}=k+1|Y_n^{u\S}=k]}{1-\P[Y_{n+1}^{u\S}=k+1|Y_n^{u\S}=k]},\\
	&(6)\quad \P[Z_{n-1}^{u\S}=k+1|Y_{n+1}^{u\S}=k-1, Y_n^{u\S}=Z_n^{u\S}=k]=1-\frac{p_{\S}(k)-\P[Y_{n+1}^{u\S}=k+1|Y_n^{u\S}=k]}{1-\P[Y_{n+1}^{u\S}=k+1|Y_n^{u\S}=k]}.
	\end{align}
Observe that by construction $Z_n^{u\S}\succcurlyeq Y_n^{u\S}$, a.s. and that $\P[Z_{n+1}^{u\S}=1|Z_{n}^{u\S}=0]=1$. The marginal distribution of $Z_n^{u\S}$ is the desired Markov chain with transition probabilities 
	\begin{align}
		\P[Z_{n+1}^{u\S}=k+1|Z_n^{u\S}=k]
		=&p_{\S}(k)\P[Y_n^{u\S}<Z_n^{u\S}|Z_n^{u\S}=k]+\P[Y_{n+1}^{u\S}=k+1|Y_n^{u\S}=k]\P[Y_n^{u\S}=Z_n^{u\S}|Z_n^{u\S}=k]\nonumber\\
		&+\sfrac{p_{\S}(k)-\P[Y_{n+1}^{u\S}=k+1|Y_n^{u\S}=k]}{1-\P[Y_{n+1}^{u\S}=k+1|Y_n^{u\S}=k]}\P[Y_n^{u\S}=Z_n^{u\S}|Z_n^{u\S}=k](1-\P[Y_{n+1}^{u\S}=k+1|Y_n^{u\S}=k])\nonumber\\
		=&p_{\S}(k)(\P[Y_n^{u\S}<Z_n^{u\S}|Z_n^{u\S}=k]+\P[Y_n^{u\S}=Z_n^{u\S}]|Z_n^{u\S}=k)
		=p_{\S}(k),\\
		\P\left(Z_{n+1}^{u\S}=k-1|Z_n^{u\S}=k\right)
		=&1-p_{\S}(k),
	\end{align}
and invariant measure
	\begin{align}
		\mu(n)=\frac{\prod_{k=1}^{n-1}\left(\frac{1}{2}-C_0kK^{-1}\right)}{\prod_{k=1}^{n}\left(\frac{1}{2}+C_0kK^{-1}\right)}, \quad\text{for }n\geq2,
	\end{align}
with $\mu(0)=1$ and $\mu(1)=\frac{1}{\frac{1}{2}+C_0K^{-1}}$.
We want to calculate the probability that the Markov chain $Z_n^{u\S}$, starting in a point $zK$, reaches first $3M_{\S}(x^{2i}\e^2)^{1+\a}K$ before going to zero, which is the equilibrium potential of a one dimensional chain \citen{B06}.
Using  (\citen{BH15}, Chapter 7.1.4, Eq. 7.1.57), we get (for $K$ large enough)
	\begin{align}
		\P_{zK}\left[T_{\a,M_\S}^{Z^{u\S}}<T_0^{Z^{u\S}}\right]&=\frac{\sum_{n=1}^{zK}\frac{1}{1-p_\S(n)}\frac{1}{\mu(n)}}{\sum_{n=1}^{3M_{\S}(x^{2i}\e^2)^{1+\a}K}\frac{1}{1-p_\S(n)}\frac{1}{\mu(n)}}\nonumber\\
		&=\frac{\sum_{n=1}^{zK}\prod_{k=1}^{n-1}\frac{1+2C_0kK^{-1}}{1-2C_0kK^{-1}}}{\sum_{n=1}^{3M_{\S}(x^{2i}\e^2)^{1+\a}K}\prod_{k=1}^{n-1}\frac{1+2C_0kK^{-1}}{1-2C_0kK^{-1}}}\nonumber\\
		&=\frac{\sum_{n=1}^{zK}\exp\left(\sum_{k=1}^{n-1}\ln\left(\frac{1+2C_0kK^{-1}}{1-2C_0kK^{-1}}\right)\right)}{\sum_{n=1}^{3M_{\S}(x^{2i}\e^2)^{1+\a}K}\exp\left(\sum_{k=1}^{n-1}\ln\left(\frac{1+2C_0kK^{-1}}{1-2C_0kK^{-1}}\right)\right)}\nonumber\\
		&\leq\frac{\sum_{n=1}^{zK}\exp\left(\sum_{k=1}^{n-1}4C_0kK^{-1}\right)}{\sum_{n=1}^{3M_{\S}(x^{2i}\e^2)^{1+\a}K}\exp\left(\sum_{k=1}^{n-1}4C_0kK^{-1}-\OO\left((kK^{-1})^2\right)\right)}\nonumber\\
		&\leq\frac{\sum_{n=1}^{zK}\exp\left(2C_0n^2K^{-1}-2C_0nK^{-1}\right)}{\sum_{n=1}^{3M_{\S}(x^{2i}\e^2)^{1+\a}K}\exp\left(2C_0n^2K^{-1}-2C_0nK^{-1}-\OO\left(n(nK^{-1})^2\right)\right)}
		\nonumber\\
		&\leq\frac{zK\exp\left(2C_0z^2K\right)}{\sum_{n=2M_{\S}(x^{2i}\e^2)^{1+\a}K}^{3M_{\S}(x^{2i}\e^2)^{1+\a}K}\exp\left(2C_0n^2K^{-1}-2C_0nK^{-1}-\OO\left(n(nK^{-1})^2\right)\right)}
\nonumber\\
		&\leq\frac{z}{M_{\S}(x^{2i}\e^2)^{1+\a}}\frac{\exp\left(2C_0z^2K\right)}{\exp\left(8C_0M_{\S}^2(x^{4i}\e^4)^{1+\a}K-4C_0M_{\S}(x^{2i}\e^2)^{1+\a}-\OO\left((x^{6i}\e^6)^{1+\a}K\right)\right)}
		\nonumber\\
		&\leq\exp\left(-2C_0K\left(2M_{\S}^2(x^{4i}\e^4)^{1+\a}-z^2\right)\right).
	\end{align}
We denote by $R$ the number of times that the   process $Z_n^{u\S}$  returns to zero before   it reaches $3M_\S (x^{2i}\e^2)^{1+\a}K$.
Let  $R_z^k=\P_{zK}[R=k]$ be the probability that this number is $k$ when starting in $zK$.  
We define the times of the $n$-th returns to zero:
	\be
		T_0^1=\inf\{t>0: Z_t^{u\S}=0\},\qquad
		T_0^n=\inf\{t>T_0^{n-1}: Z_t^{u\S}=0\}.
	\ee
We than have
	\begin{align}
		R_z^k=\P_{zK}\left[T_0<T_{\a,M_{\S}}^{Z^{u\S}}\right](1-\P_0[T_{\a,M_{\S}}^{Z^{u\S}}<T_0])^{k-1}\P_0[T_{\a,M_{\S}}^{Z^{u\S}}<T_0]\leq(1-A)^{k-1}A,
	\end{align}
where 
\begin{align}
A\equiv \P_0[T_{\a,M_{\S}}^{Z^{u\S}}<T_0]
&=\sum_{i\geq1}p(0,i)\P_i[T_{\a,M_\S}^{Z^{u\S}}<T_0]=p(0,1)\P_1[T_{\a,M_\S}^{Z^{u\S}}<T_0]\nonumber\\
&\leq\exp\left(-\tilde C_0KM_{\S}^2(x^{4i}\e^4)^{1+\a}\right)\leq\exp\left(-\tilde C_0M_{\S}^2K^{3\a}\right), 
\end{align}
for some finite positive constant $\tilde C_0$.
 Then 
	\be
	\label{R<=N}
		\P[R\leq N]\leq\sum_{i=1}^NR_z^i\leq\sum_{i=1}^N(1-A)^{i-1}A
		=1-(1-A)^N.
	\ee
We choose, e.g.,  $N\sim \frac{1}{K^2A}$, so that $\P\left[R\leq N\right]=o\left(K^{-1}\right)$. Let $I_0\equiv T_0^1$ and $I_n\equiv T_0^n-T_0^{n-1}$ the time the process needs for return to zero. The $(I_j)_{j\in\N}$ are i.i.d. random variables and it holds:
	\begin{equation}
		\sum_{n=1}^RI_n\leq T_{\a,M_{\S}}^{Z^{u\S}}\leq\sum_{n=1}^{R+1}I_n.
	\end{equation}
The underlying process, the sum process $\S(t)$ \eqref{sumprocess}, of $Z_n^{u\S}$ jumps with a rate 
	\begin{align}
		\lambda_{\S}=f\S(t)K+D\S(t)K+c\S(t)^2K+\D N_{aa}(t)\leq C_\lambda K.
	\end{align}
Since the Markov chain $Z_n^{u\S}$ has to jump at least one time, it holds that, for all $j\in\N, I_j>W$, a.s., where $W\sim\exp(C_\lambda K)$. Thus
	\begin{equation}
		\P[I_j<y]\leq\P[W<y]=1-\exp(-C_\lambda  Ky).
	\end{equation}
We have
	\begin{align}
		\P[T_{\a,M_{\S}}^{Z^{u\S}}<\t_{aA}^{i+}\land\t_{aA}^{(i+1)-}\land e^{VK^{\a}}]
		&\leq\P[T_{\a,M_{\S}}^{Z^{u\S}}<e^{VK^{\a}}]\nonumber\\
		&\leq\P[T_{\a,M_{\S}}^{Z^{u\S}}< e^{VK^{\a}}\cap\{R>N\}]
		+\P[T_{\a,M_{\S}}^{Z^{u\S}}< e^{VK^{\a}}\cap\{R\leq N\}]\nonumber\\
		&\leq\P[T_{\a,M_{\S}}^{Z^{u\S}}< e^{VK^{\a}}\cap\{R>N\}]+\P[R\leq N].
	\end{align}
First we estimate $\P[T_{\a,M_{\S}}^{Z^{u\S}}< e^{VK^{\a}}\cap\{R>N\}]$. Since $T_{\a,M_{\S}}^{Z^{u\S}}\geq\sum_{n=1}^RI_n$, it holds that if $\frac{n}{2}$ of the $I_j$ are greater than $\frac{2}{n}e^{VK^{\a}}$, then $T_{\a,M_{\S}}^{Z^{u\S}}\geq e^{VK^{\a}}$.
	\begin{align}
		\P[T_{\a,M_{\S}}^{X,u\S}<e^{VK^{\a}}\cap\{R>N\}]
		&\leq\sum_{n=N}^\infty\P[T_{\a,M_\S}^{X,u\S}<e^{VK^{\a}}\cap\{R=n\}]\nonumber\\
		&\leq\sum_{n=N}^\infty\P\left[\sum_{j=1}^nI_j<e^{VK^{\a}}\right]
		\leq\sum_{n=N}^\infty\P\left[\sum_{j=1}^n\1_{\left\{I_j<\frac{2}{n}e^{VK^{\a}}\right\}}>\frac{n}{2}\right].
	\end{align}
We have $\P[I_j<\frac{2}{n}e^{VK^{\a}}]\leq\P[W<\frac{2}{n}e^{VK^{\a}}]=1-\exp\left(-\frac{2C_\lambda K}{n}e^{VK^{\a}}\right)\equiv p_n$. Hence the probability that at least $\frac{n}{2}$ of the $n$ random variables are greater than $\frac{2}{n}e^{VK^{\a}}$ is binomial distributed  with parameters $p_n,n$.
	\be
	\label{1Teil}
		\sum_{n=N}^\infty\P\left[\sum_{j=1}^n\1_{\left\{I_j<\frac{2}{n}e^{VK^{\a}}\right\}}>\frac{n}{2}\right]
		=\sum_{n=N}^\infty\sum_{m=n/2}^n\binom{n} {m}(1-p_n)^{n-m}p_n^m\leq\sum_{n=N}^\infty 4^np_n^{\frac{n}{2}}
		\leq \frac{(16 p_N)^{N/2}}{1-4p_N^{1/2}}.
		\ee
where we used that, in  the  range of summation, $p_n\leq p_N$. 
Then, for $K$ large enough, $4p_N^{1/2}\leq 1/2$, and 
\be
(16 p_N)^{N/2} =\left(16\left(1- \exp\left(-\sfrac{2C_\lambda}{N}Ke^{VK^{\a}}\right)\right)\right)^{N/2}
\leq \left(16\left(\sfrac{2C_\lambda}{N}Ke^{VK^{\a}}\right)\right)^{N/2}
=  \left(16\left(2C_\lambda AK^3e^{VK^{\a}}\right)\right)^{N/2}.
\ee
Recalling that $A=e^{-O(K^{3\a})}$, one sees that \eqref{1Teil} is bounded by $o\left(e^{- K^{2\a}}\right)$.
Hence we get
	\be
		\P\left[T_{\a,M_{\S}}^{X,u\S}<\t_{aA}^{i+}\land\t_{aA}^{(i+1)-}\land e^{VK^{\a}}\right]
		\leq\P\left[T_{\a,M_{\S}}^{X,u\S}<e^{VK^\a}\cap \{R>N\}\right]+\P[R\leq N]
		=o\left(\frac{1}{K}\right).
	\ee
This concludes the proof of Proposition \eqref{upSum}.
\end{proof}
\paragraph{Step 2:}
We derive a rough upper bound on the process $n_{aa}$ (Proposition \ref{ubaa}). Recall that
\begin{align}
\g_\D\equiv\frac{f+\frac\D2}{4\bar n_A(f+\D)}.
\end{align}
\begin{proof}[Proof of Proposition \ref{ubaa}]
The proof is similar to the one  in Step 1. Again, we define the difference process $X_t^{aa}$ between $N_{aa}$ and $\g_\D x^{2i}\e^2K$. This is a branching process with the same rates as $n_{aa}$. Set
	\bea
		X_t^{aa}&=&N_{aa}(t)-\g_\D x^{2i}\e^2K\\
		T_0^{X,aa}&\equiv&\inf\{t\geq0: X_t^{aa}=0\}\\
		T_{\a,M_{aa}}^{X,aa}&\equiv&\inf\{t\geq0: X_t^{aa}\geq 3M_{aa}(x^{2i}\e^2)^{1+\a}K\}.
	\eea
Let $Y_n^{aa}$ be the discretisation of $X_t^{aa}$, obtained as described in Step 1. 
\begin{lemma}
	For $t\in\N$ such that $\vartheta_n\in \left[\t_{aA}^{i-}, \t_{aA}^{i+}\land\t_{aA}^{(i+1)-}\land e^{VK^{\a}}\right]$ and $1\leq k\leq 3M_{aa}(x^{2i}\e)^{1+\a}K$, there exists a constant $C_0>0$ such that
	\begin{equation}
		\P[Y_{n+1}^{aa}=k+1|Y_n^{aa}=k]\leq \frac{1}{2}-C_0\equiv p_{aa}.
	\end{equation}
\end{lemma}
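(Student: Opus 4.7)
The plan is the direct computation of $\P[Y_{n+1}^{aa}=k+1 \mid Y_n^{aa}=k] = b_{aa}/(b_{aa}+d_{aa})$ using the explicit rates \eqref{raten2} together with the a priori bounds on the auxiliary processes already available from the earlier steps. On the time interval $[\t_{aA}^{i-}, \t_{aA}^{i+}\land\t_{aA}^{(i+1)-}\land e^{VK^{\a}}]$ one has $\Sigma(t) = \bar n_A + O((x^{2i}\e^2)^{1+\a})$ (Propositions \ref{upSum} and \ref{lbSum}), $n_{AA}(t) = \bar n_A - x^i\e + O(x^{2i}\e^2)$ (Propositions \ref{lbAA} and \ref{ubAA}), $n_{aA}(t) \in [x^{i+1}\e,\, x^i\e + x^{2i}\e^2]$ by definition of the stopping times $\t_{aA}^{i\pm}$, and the assumption $Y_n^{aa} = k$ gives $n_{aa}(t) = \gamma_\Delta x^{2i}\e^2 + k/K$ with $1 \leq k \leq 3M_{aa}(x^{2i}\e^2)^{1+\a}K$.

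The first step is to rewrite $1 - n_{AA}/\Sigma = (n_{aa}+n_{aA})/\Sigma$, which exposes that this prefactor in $b_{aa}$ is itself of order $x^i\e$, so the contribution $fn_{aa}(1 - n_{AA}/\Sigma)$ to $b_{aa}/K$ is of order $x^{3i}\e^3$ and hence subdominant relative to $fn_{aA}^2/(4\Sigma) = O(x^{2i}\e^2)$. Using $D + c\bar n_A = f$, which follows from $\bar n_A = (f-D)/c$, a direct substitution gives
\begin{equation*}
\frac{b_{aa}}{K} \leq \frac{f\,x^{2i}\e^2}{4\bar n_A}\bigl(1 + O(x^i\e) + O((x^{2i}\e^2)^\a)\bigr), \qquad \frac{d_{aa}}{K} \geq \gamma_\Delta(f+\Delta)\,x^{2i}\e^2 (1-o(1)) = \frac{(f+\Delta/2)\,x^{2i}\e^2}{4\bar n_A}(1-o(1)),
\end{equation*}
where the last equality is the entire point of the definition \eqref{gammadelta}: the level $\gamma_\Delta x^{2i}\e^2$ sits slightly above the dynamical equilibrium $fx^{2i}\e^2/(4\bar n_A(f+\Delta))$ of $n_{aa}$ at $n_{aA} = x^i\e$, by exactly the margin needed to produce a strictly negative drift. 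Subtracting yields $d_{aa} - b_{aa} \geq (\Delta/(16\bar n_A))\,x^{2i}\e^2 K$ once $\e$ is small enough, and since $b_{aa}+d_{aa} = O(x^{2i}\e^2 K)$, one concludes
\begin{equation*}
\frac{b_{aa}}{b_{aa}+d_{aa}} = \frac{1}{2} - \frac{d_{aa}-b_{aa}}{2(b_{aa}+d_{aa})} \leq \frac{1}{2} - C_0
\end{equation*}
with $C_0$ of order $\Delta/(2f+\Delta)$ and independent of $k$ and $i$.

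The only delicate point worth verifying carefully is the uniformity of this constant throughout the iteration $0 \leq i \leq \lfloor -\ln(\e K^{1/4-\a})/\ln(x)\rfloor$. The relative errors coming from $\Sigma \neq \bar n_A$, from $n_{AA} \neq \bar n_A - x^i\e$, and from the extra contribution $k/K$ to $n_{aa}$ are all of size $O((x^{2i}\e^2)^\a)$, while the cross-term correction in $b_{aa}$ is $O(x^i\e)$; both are dominated by $\Delta$ once $\e$ is fixed small at the start of Phase 3, independently of $i$ and $K$. Increasing $k$ only strengthens the net downward drift (the derivative of $d_{aa}-b_{aa}$ with respect to $n_{aa}$ is positive at leading order), so the worst case is $k=1$ and the argument sketched above covers it.
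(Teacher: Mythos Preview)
Your argument is correct and is essentially the paper's own proof: both compute $b_{aa}/(b_{aa}+d_{aa})=\tfrac12-(d_{aa}-b_{aa})/\bigl(2(b_{aa}+d_{aa})\bigr)$, insert the a priori bounds on $\Sigma$, $n_{AA}$, $n_{aA}$, and exploit $\gamma_\Delta(f+\Delta)=\tfrac{f+\Delta/2}{4\bar n_A}$ to extract a strictly negative leading constant independent of $i$ and $k$. One small point the paper makes explicit and you should too: at iteration $i$ the bounds from Propositions~\ref{lbSum}, \ref{lbAA}, \ref{ubAA} are logically downstream of this lemma, so one invokes instead the bounds established in the $(i{-}1)$-th iteration (carried over via Steps~7--8), or Proposition~\ref{propD} for $i=0$; also note that Proposition~\ref{lbSum} only gives $\Sigma\geq \bar n_A-O(x^{2i}\e^2)$, not $O\bigl((x^{2i}\e^2)^{1+\a}\bigr)$, though this does not affect the conclusion.
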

\begin{proof}
In the following we use Proposition \ref{propD} for the first iteration step, since the mutant population $n_{AA}$ increased to an $\e$-neighbourhood of its 
equilibrium $\bar n_A$ and the other two populations decreased to an $\e$ order. Thus the influence of the small $aA$- and $aa$-populations is negligible for 
the dynamics of $n_{AA}$ and the $AA$-population will stay in the $\bar n_A$-neighbourhood an exponentially long time. Now for $i^{th}$ iteration-step we 
use the finer bounds of $n_{AA}(t)$ (Proposition \ref{lbAA} and \ref{ubAA}) and $\S(t)$ (Proposition \ref{lbSum}) estimated in the $(i-1)^{th}$ iteration-step before.
By \eqref{raten2}, we have
	\begin{align}
	\label{nomaa}
		\P[Y_{n+1}^{aa}=k+1|Y_n^{aa}=k]
	&=\frac{(\g_\D x^{2i}\e^2K+k)f\left(1-\frac{n_{AA}(t)}{\S(t)}\right)+\frac{fK}{4\S(t)}n_{aA}^2(t)}{(\g_\D x^{2i}\e^2K+k)\left[f\left(1-\frac{n_{AA}(t)}{\S(t)}\right)+D+\D+c\S(t)\right]+\frac{fK}{4\S(t)}n_{aA}^2(t)}\nonumber\\
		&=\frac{1}{2}+\frac{\frac{1}{2}f-\frac{f}{2\S(t)}n_{AA}(t)+\frac{fK}{8\S(t)}\frac{n_{aA}^2(t)}{(\g_\D x^{2i}\e^2K+k)}-\frac{1}{2}D-\frac{1}{2}\D-\frac{1}{2}c\S(t)}{f\left(1-\frac{n_{AA}(t)}{\S(t)}\right)+D+\D+c\S(t)+\frac{fK}{4\S(t)}\frac{n_{aA}^2(t)}{(\g_\D x^{2i}\e^2K+k)}}.
	\end{align}
Using Propositions \ref{ubAA}, \ref{lbSum} , and \ref{upSum} and \eqref{gammadelta}, one sees that the  numerator in the second summand  of \eqref{nomaa} 
is bounded from above by
	\begin{align}
		& \frac{1}{2}f-\frac{f(\bar n_A-x^{i}\e)}{2(\bar n_A+3M_\S (x^{2i}\e^2)^{1+\a})}+\frac{f(x^i\e+x^{2i}\e^2)^2}{8\g_\D x^{2i}\e^2(\bar n_A-\frac{\D+\vartheta}{c\bar n_A}\g_\D x^{2i}\e^2+3M_{\S}(x^{2i}\e^2)^{1+\a})}
		-\frac{D+\D+c\bar n_A}{2}+\OO(x^{2i}\e^2) \nonumber\\
		&\leq - \frac{f+\D}{2}+\frac{f}{8\bar n_A\g_\D}+\mathcal{O}(x^{i}\e)
		=-\frac{f+\D}{2f+\D}\frac\D2+\mathcal{O}(x^{i}\e).
	\end{align}
Since $\e<\frac\D2$, there exists a constant $C_0>0$ such that
	\begin{align}
		\P[Y_{n+1}^{aa}=k+1|Y_n^{aa}=k]\leq\frac{1}{2}-C_0\equiv p_{aa}.
	\end{align}
\end{proof}
As in Step 1 we couple $Y_n^{aa}$ on a process $Z_n^{aa}$ via:
	\begin{align}
	&(1)\quad Z_0^{aa}=Y_0^{aa}\lor 0\\
	&(2)\quad \P[Z_{n+1}^{aa}=k+1|Y_n^{aa}<Z_n^{aa}=k]=p_{aa},\\
	&(3)\quad \P[Z_{n+1}^{aa}=k-1|Y_n^{aa}<Z_n^{aa}=k]=1-p_{aa},\\
	&(4)\quad \P[Z_{n+1}^{aa}=k+1|Y_{n+1}^{aa}=k+1, Y_n^{aa}=Z_n^{aa}=k]=1,\\
	&(5)\quad \P[Z_{n+1}^{aa}=k+1|Y_{n+1}^{aa}=k-1, Y_n^{aa}=Z_n^{aa}=k]=\frac{p_{0}-\P[Y_{n+1}^{aa}=k+1|Y_n^{aa}=k]}{1-\P[Y_{n+1}^{aa}=k+1|Y_n^{aa}=k]}.
	\end{align}
Observe that by construction $Z_n^{aa}\succcurlyeq Y_n^{aa}$, a.s.. The marginal distribution of $Z_n^{aa}$ is the desired Markov chain with transition probabilities 
	\be
		\P[Z_{n+1}^{aa}=k+1|Z_n^{aa}=k]=p_{aa}=1-
		\P[Z_{n+1}^{aa}=k-1|Z_n^{aa}=k]
	\ee
and invariant measure
	\begin{align}
		\mu(n)=\frac{\prod_{k=1}^{n-1}\left(\frac{1}{2}-C_0\right)}{\prod_{k=1}^{n}\left(\frac{1}{2}+C_0\right)}=\frac{\left(\frac{1}{2}-C_0\right)^{n-1}}{\left(\frac{1}{2}+C_0\right)^{n}}.
	\end{align}
The remainder of the proof is a complete re-run of the proof of Proposition \ref{ubaa} and we skip the details.
\end{proof}
\paragraph{Step 3:}
We estimate the lower bound on the process $\S(t)$, for $t\in \left[\t_{aA}^{i-}, \t_{aA}^{i+}\land\t_{aA}^{(i+1)-}\land e^{VK^{\a}}\right]$.
\begin{proof}[Proof of Proposition \ref{lbSum}]
The proof is similar to those in Step 1 and 2. We only perform the crucial steps.
This time the difference process is given by the difference of $\S(t)$ and $\bar n_A-\frac{\D+\vartheta}{c\bar n_A}\g_\D x^{2i}\e^2$. Let 
	\bea
		X_t^{l\S}&=&\S(t)K-\left(\bar n_A-\sfrac{\D+\vartheta}{c\bar n_A}\g_\D x^{2i}\e^2 \right)K\\
		T_0^{X,l\S}&\equiv&\inf\{t\geq0: X_t^{l\S}=0\}\\
		T_{\a,M_\S}^{X,l\S}&\equiv&\inf\{t\geq0: X_t^{l\S}\leq- 3M_{\S}(x^{2i}\e^2)^{1+\a}K\}.
	\eea
As described in Step 1 we construct the discrete process $Y_n^{l\S}$ associated to $X_t^{l\S}$. We show that $Y_n^{l\S}$ jumps down with a probability less than $\frac12$:
\begin{lemma}
	For $t\in\N$ such that $\vartheta_n\in \left[\t_{aA}^{i-}, \t_{aA}^{i+}\land\t_{aA}^{(i+1)-}\land e^{VK^{\a}}\right]$ and $1\leq k\leq 3M_\S (x^{2i}\e^2)^{1+\a}K$ there exists  a constant $C_0>0$ such that
	\begin{equation}
		\P[Y_{n+1}=-k-1|Y_n=-k]\leq \frac{1}{2}-C_0x^{2i}\e^2\equiv p_{\S}.
	\end{equation}
\end{lemma}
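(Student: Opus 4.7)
The plan is to read off the downward transition probability directly from the jump rates of the sum process and then exploit the upper bound on $n_{aa}(t)$ already obtained in Step 2. Since the underlying continuous-time process $\S(t)K$ has only $\pm 1$ jumps and $Y_n^{l\S}$ merely records these jumps, conditional on $Y_n^{l\S}=-k$ the next-step distribution is, using \eqref{raten1} evaluated at the state $s := \bar n_A - \frac{\D+\vartheta}{c\bar n_A}\g_\D x^{2i}\e^2 - k/K$,
\begin{equation*}
\P\left[Y_{n+1}^{l\S}=-k-1\,\big|\, Y_n^{l\S}=-k\right]
=\frac{d_\S}{b_\S+d_\S}
=\frac12 - \frac{b_\S-d_\S}{2(b_\S+d_\S)}.
\end{equation*}
On the event considered, $s$ stays close to $\bar n_A$ and $N_{aa}(t)\le \bar c K$ for some constant, so $b_\S+d_\S\le C K$ uniformly. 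Hence it suffices to establish a lower bound of the form $b_\S - d_\S \ge \tilde C K x^{2i}\e^2$.

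The key computation reduces to a one-line identity. From \eqref{raten1} and the defining relation $c\bar n_A=f-D$ in \eqref{equi},
\begin{equation*}
b_\S-d_\S = fsK - DsK - cs^2K - \D N_{aa}(t) = cKs(\bar n_A-s) - \D N_{aa}(t).
\end{equation*}
Substituting $\bar n_A - s = \frac{\D+\vartheta}{c\bar n_A}\g_\D x^{2i}\e^2 + k/K$ and using the upper bound $N_{aa}(t)\le \g_\D x^{2i}\e^2 K + 3M_{aa}(x^{2i}\e^2)^{1+\a}K$ from Proposition \ref{ubaa} (valid on precisely the time interval in the statement), the leading terms collapse by a clean cancellation:
\begin{equation*}
b_\S-d_\S \;\ge\; \frac{s}{\bar n_A}(\D+\vartheta)\g_\D x^{2i}\e^2 K + csk - \D\g_\D x^{2i}\e^2 K - 3\D M_{aa}(x^{2i}\e^2)^{1+\a}K.
\end{equation*}
Since $s/\bar n_A = 1 + o(1)$ uniformly on the range of $k$, the dominant contribution is $\vartheta\g_\D x^{2i}\e^2 K$, and the error term $(x^{2i}\e^2)^{1+\a}K = x^{2i}\e^2 K\cdot(x^{2i}\e^2)^\a$ is absorbed for $\e$ small because $\a>0$. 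Dividing by $2(b_\S+d_\S)\le 2CK$ produces the desired constant $C_0>0$.

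There is no real obstacle here: the whole point of writing the target level as $\bar n_A - \frac{\D+\vartheta}{c\bar n_A}\g_\D x^{2i}\e^2$ with $\vartheta>0$ strictly was to leave a built-in buffer over the $\D$-competition coming from $N_{aa}$, so that once Proposition \ref{ubaa} is invoked the sum process has genuine positive drift back toward the target. Thus the computation is short and uniform in $k\in\{1,\dots,3M_\S(x^{2i}\e^2)^{1+\a}K\}$, and the only care needed is in checking that the $O((x^{2i}\e^2)^{1+\a})$ correction is strictly smaller than $\vartheta\g_\D x^{2i}\e^2$ for every $i\le \lfloor -\ln(\e K^{1/4-\a})/\ln x\rfloor$, which reduces to requiring $\e$ small enough, independent of $K$.
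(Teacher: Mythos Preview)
Your proof is correct and follows exactly the approach the paper indicates: compute the downward probability from the rates \eqref{raten1} and feed in the upper bound on $n_{aa}$ from Proposition~\ref{ubaa}. The paper in fact writes only ``Using the rates of the sum process \eqref{raten1} and the upper bound on $n_{aa}$ obtained in Step 2, this is a simple computation and we skip the details,'' so you have carried out precisely the omitted computation, including the key cancellation $(\D+\vartheta)-\D=\vartheta$ that explains why the level was chosen with the extra $\vartheta$.
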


\begin{proof}
Using the rates of the sum process \eqref{raten1} and the upper bound on $n_{aa}$ obtained in Step 2, this is a simple computation and we skip the details.
\end{proof}
As in Step 1, to obtain a Markov chain we couple the process $Y_n^{l\S}$ on a 
process $Z_n^{l\S}$ via:
	\begin{align}
	&(1)\quad Z_0=Y_0\lor 0,\\
	&(2)\quad \P[Z_{n+1}^{l\S}=-k+1|Y_n^{l\S}>Z_n^{l\S}=-k]=1-p_{\S},\\
	&(3)\quad \P[Z_{n+1}^{l\S}=-k-1|Y_n^{l\S}>Z_n^{l\S}=-k]=p_{\S},\\
	&(4)\quad \P[Z_{n+1}^{l\S}=-k-1|Y_{n+1}^{l\S}=-k-1, Y_n^{l\S}=Z_n^{l\S}=-k]=1,\\
	&(5)\quad \P[Z_{n+1}^{l\S}=-k-1|Y_{n+1}^{l\S}=-k+1, Y_n^{l\S}=Z_n^{l\S}=-k]=\sfrac{p_{\S}-\P[Y_{n+1}^{l\S}=-k-1|Y_n^{l\S}=-k]}{1-\P[Y_{n+1}^{l\S}=-k-1|Y_n^{l\S}=-k]}.
	\end{align}
Observe that by construction $Z_n^{l\S}\preccurlyeq Y_n^{l\S}$, a.s.. The marginal distribution of $Z_n^{l\S}$ is the desired Markov chain with transition probabilities 
	\begin{align}
		&\P[Z_{n+1}^{l\S}=k+1|Z_n^{l\S}=k]=p_{\S},\\
		&\P[Z_{n+1}^{l\S}=k-1|Z_n^{l\S}=k]=1-p_{\S},
	\end{align}
and invariant measure
	\begin{align}
		\mu(n)=\frac{\prod_{k=1}^{n-1}\left(\frac{1}{2}-C_0x^{2i}\e^2\right)}{\prod_{k=1}^{n}\left(\frac{1}{2}+C_0x^{2i}\e^2\right)}=\frac{\left(\frac{1}{2}-C_0x^{2i}\e^2\right)^{n-1}}{\left(\frac{1}{2}+C_0x^{2i}\e^2\right)^{n}}.
		\end{align}
The remainder of the proof follows along the lines of the proof given in Step 1. We prove that the process returns to zero many times before it hits $3M_{\S}(x^{2i}\e^2)^{1+\a}$ and calculate the duration of  
 one zero-return to get the desired result.
\end{proof}
\paragraph{Step 4:}
With the results of the Steps 1-3 and the settings we are able to calculate a lower bound (Step 4.1) and an upper bound (Step 4.2) for $n_{AA}(t)$, for $t\in \left[\t_{aA}^{i-}, \t_{aA}^{i+}\land\t_{aA}^{(i+1)-}\land e^{VK^{\a}}\right]$.
\\[6pt]
\paragraph{Step 4.1:} We now prove Proposition \ref{lbAA}, the lower bound on $n_{AA}$.
\begin{proof}[Proof of Proposition \ref{lbAA}]
From Step 3 we know that $\S(t)\geq\bar n_A-\frac{\D+\vartheta}{c\bar n_A}\g_\D x^{2i}\e^2-3M_{\S}(x^{2i}\e^2)^{1+\a}$. With the upper bound in Step 2 for $n_{aa}$ and the settings for the steps used for the $aA$-population, we get
	\begin{align}
		n_{AA}(t)&= \S(t)-n_{aA}(t)-n_{aa}(t)\nonumber\\
		&\geq \bar n_A-\frac{\D+\vartheta}{c\bar n_A}\g_\D x^{2i}\e^2-x^i\e-(1+\g_\D) x^{2i}\e^2-3(M_{\S}+M_{aa})(x^{2i}\e^2)^{1+\a}\nonumber\\
		&\geq\bar n_A-x^i\e+\mathcal{O}(x^{2i}\e^2).
	\end{align}
\end{proof}
\paragraph{Step 4.2:} We prove Proposition \ref{ubAA}, the upper bound on $n_{AA}$.
\begin{proof}[Proof of Proposition \ref{ubAA}]
From Step 1 we know that $\S(t)\leq\bar n_A+3M_{\S}(x^{2i}\e^2)^{1+\a}$. With the lower bound on $n_{aA}(t)$ defined in the settings we get
	\be
		n_{AA}(t)= \S(t)-n_{aA}(t)-n_{aa}(t)\leq \bar n_A+3M_{\S}(x^{2i}\e^2)^{1+\a}-x^{i+1}\e\leq\bar n_A-x^{i+1}\e+\mathcal{O}((x^{2i}\e^2)^{1+\a}).
	\ee
\end{proof}

\paragraph{Step 5:}
Up to now we have estimated upper and lower bounds for all single processes: $\S(t), n_{aa}(t), n_{aA}(t)$ and $n_{AA}(t)$, for $t\in \left[\t_{aA}^{i-}, \t_{aA}^{i+}\land\t_{aA}^{(i+1)-}\land e^{VK^{\a}}\right]$.
Now, we prove that $n_{aA}(t)$ has the tendency to decrease on the time intervals defined in the settings. For this we restart $n_{aA}$ when the process hits $x^i\e$ and show that with high probability the $aA$-
population decreases to $x^{i+1}\e$ before it exceeds again the $x^i\e+x^{2i}\e^2$-value (Proposition \ref{decayaA}).  For this we couple $n_{aA}(t)$ on a process which minorise it and on one which majorise it 
and show that these processes decrease to $x^{i+1}\e$ before going back to $x^i\e+x^{2i}\e^2$.
\begin{proof}[Proof of Proposition \ref{decayaA}]
As before let $Y_n^{aA}$ (cf. e.g. Step 1) be the associated discrete process to $N_{aA}(t)$. We start by coupling $Y_n^{aA}$ on a Markov chain $Z_n$ such that $Z_n^u\succcurlyeq Y_n^{aA}$, a.s..
\begin{lemma}
\label{decaylemaA}
		For $t\in\N$ such that $\vartheta_n\in \left[\t_{aA}^{i-}, \t_{aA}^{i+}\land\t_{aA}^{(i+1)-}\land e^{VK^{\a}}\right]$ there exists  a constant $C_0>0$ such that
	\begin{equation}
		\P[Y_{n+1}^{aA}=k+1|Y_n^{aA}=k]\leq \frac{1}{2}-C_0x^{i+1}\e\equiv p_{aA}^u.
	\end{equation}
\end{lemma}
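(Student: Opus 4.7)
The plan is to write
\begin{equation*}
\P[Y_{n+1}^{aA}=k+1\mid Y_n^{aA}=k]=\frac{b_{aA}}{b_{aA}+d_{aA}}=\frac12+\frac{b_{aA}-d_{aA}}{2(b_{aA}+d_{aA})},
\end{equation*}
evaluated at the jump time $\vartheta_n$, and then show that the numerator is negative of order $-K x^{2i}\e^2$ while the denominator is of order $K x^i\e$, so that the ratio is bounded above by $-C_0 x^{i+1}\e$ as required. The key algebraic input is the identity $c\bar n_A=f-D$ coming from \eqref{equi}, which transforms the rate difference from \eqref{raten3} into
\begin{equation*}
b_{aA}-d_{aA}= -cK n_{aA}(\S-\bar n_A)-\frac{fK n_{aA}^{2}}{2\S}+\frac{2fK n_{aa}n_{AA}}{\S}.
\end{equation*}

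In the second paragraph I would insert the bounds valid on the interval $[\t_{aA}^{i-},\,\t_{aA}^{i+}\land\t_{aA}^{(i+1)-}\land e^{VK^{\a}}]$: the settings give $n_{aA}\in[x^{i+1}\e,x^{i}\e+x^{2i}\e^{2}]$; Proposition \ref{ubaa} gives $n_{aa}\leq \g_\D x^{2i}\e^{2}+3M_{aa}(x^{2i}\e^{2})^{1+\a}$; Propositions \ref{upSum} and \ref{lbSum} pin $\S-\bar n_A$ between $-\frac{\D+\vartheta}{c\bar n_A}\g_\D x^{2i}\e^{2}-O((x^{2i}\e^{2})^{1+\a})$ and $O((x^{2i}\e^{2})^{1+\a})$; and Propositions \ref{lbAA},\ref{ubAA} control $n_{AA}$. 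Taking the upper bound of each term, the first piece contributes only $O(Kx^{3i}\e^{3})$ (because the $x^{2i}\e^{2}$ drift of $\S$ is multiplied by $n_{aA}\le x^{i}\e+x^{2i}\e^2$); the second is at most $-\frac{fK x^{2i+2}\e^{2}}{2\bar n_A}(1+o(1))$; and the third is at most $2fK\g_\D x^{2i}\e^{2}(1+o(1))=\frac{fK(f+\D/2)}{2\bar n_A(f+\D)}x^{2i}\e^{2}(1+o(1))$ by the definition \eqref{gammadelta} of $\g_\D$.

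Adding the leading contributions and factoring out $\frac{fK x^{2i}\e^{2}}{2\bar n_A(f+\D)}$ yields
\begin{equation*}
b_{aA}-d_{aA}\le \frac{fK x^{2i}\e^{2}}{2\bar n_A(f+\D)}\bigl[-(f+\vartheta)+(f+\sfrac{\D}{2})\bigr]+\text{l.o.t.} = -\frac{f(\vartheta-\D/2)}{2\bar n_A(f+\D)}K x^{2i}\e^{2}+\text{l.o.t.},
\end{equation*}
where the cancellation is exactly the one built into \eqref{defx} $x^{2}=(f+\vartheta)/(f+\D)$, and the leading coefficient is strictly negative since by assumption $\vartheta>\D/2$. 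For the denominator, the same identity $f+D+c\bar n_A=2f$ gives $b_{aA}+d_{aA}\le 2fKn_{aA}(1+o(1))\le 2fK(x^{i}\e+x^{2i}\e^{2})(1+o(1))$. Dividing, the ratio is bounded by $-\frac{\vartheta-\D/2}{8\bar n_A(f+\D)}\,x^{i}\e\,(1+o(1))$, and since $x<1$ one has $x^{i}\e\ge x^{-1}\cdot x^{i+1}\e$, so setting $C_{0}:=\frac{\vartheta-\D/2}{8x\,\bar n_A(f+\D)}$ (and absorbing the $(1+o(1))$ factor by taking $\e$ small) gives the claim.

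The only real subtlety is that the leading $O(x^{2i}\e^{2})$ terms must cancel against each other and leave a negative coefficient; all other contributions (from the tolerance terms $3M_\S(x^{2i}\e^{2})^{1+\a}$, $M_{AA}x^{2i}\e^{2}$, etc.) are genuinely of lower order relative to $x^{2i}\e^{2}$ because $\a>0$ and $x^{i}\e\to 0$. Choosing the lower bound $n_{aA}\ge x^{i+1}\e$ in the quadratic term and the upper bounds on $n_{aa}$, $n_{AA}$, $\bar n_A-\S$ everywhere else makes this lower-order absorption rigorous, and gives the stated $p_{aA}^{u}$.
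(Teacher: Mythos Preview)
Your proposal is correct and follows essentially the same approach as the paper's proof: write the jump-up probability as $\tfrac12+\tfrac{b_{aA}-d_{aA}}{2(b_{aA}+d_{aA})}$, use $c\bar n_A=f-D$, then plug in the bounds on $n_{aa}$, $\Sigma$, $n_{AA}$ from Propositions \ref{upSum}--\ref{ubAA} together with $n_{aA}\in[x^{i+1}\e,x^{i}\e+x^{2i}\e^{2}]$ to force the numerator to be $-\text{const}\cdot Kx^{2i}\e^{2}$ and the denominator to be $\sim 2fKx^{i}\e$, using $x^{2}=(f+\vartheta)/(f+\D)$ to produce the factor $(\vartheta-\D/2)$. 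The only cosmetic difference is that the paper first divides numerator and denominator by $k$ (working per-individual) and records the constant as $\tfrac{\vartheta-\D/2}{8\bar n_A(f+\vartheta)}$ in front of $x^{i+1}\e$, whereas you keep the total rates and land on $\tfrac{\vartheta-\D/2}{8\bar n_A(f+\D)}x^{i}\e$ before converting to $x^{i+1}\e$; both are strictly positive and yield the same lemma.
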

\begin{proof}
For $t<\t_{aA}^{i+}\land\t_{aA}^{(i+1)-}\land e^{VK^{\a}}$, we have
	\begin{align}
	\label{nomaA}
		\P[Y_{n+1}^{aA}=k+1|Y_n^{aA}=k]
		&=\frac{fk\left(1-\frac{kK^{-1}}{2\S(t)}\right)+2fN_{aa}(t)\frac{n_{AA}(t)}{\S(t)}}{fk\left(1-\frac{kK^{-1}}{2\S(t)}\right)+2fN_{aa}(t)\frac{n_{AA}(t)}{\S(t)}+k[D+c\S(t)]}\nonumber\\
		&\leq\frac{1}{2}+\frac{\frac{1}{2}f-\frac{f}{4\S(t)}kK^{-1}+\frac{f n_{AA}(t)}{k\S(t)}N_{aa}(t)-\frac{1}{2}D-\frac{1}{2}c\S(t)}{f\left(1-\frac{kK^{-1}}{2\S(t)}\right)+2\frac{fN_{aa}(t)}{k}\frac{n_{AA}(t)}{\S(t)}+[D+c\S(t)]}.
	\end{align}
As in the previous steps, we bound the  nominator of the second summand in  \eqref{nomaA} using  Propositions \ref{upSum}, \ref{lbSum}, and \ref{ubAA}, from above by 
	\begin{align}
		&\sfrac{1}{2}f-\sfrac{f}{4\bar n_A}kK^{-1}+\sfrac{f(\bar n_A-x^{i+1}\e)}{kK^{-1}(\bar n_A-\sfrac{\D+\vartheta}{c\bar n_A}\g_\D x^{2i}\e^2)}\g_\D x^{2i}\e^2-\sfrac{1}{2}D-\sfrac{1}{2}c(\bar n_A-\sfrac{\D+\vartheta}{c\bar n_A}\g_\D x^{2i}\e^2)+\mathcal{O}((x^{2i}\e^2)^{1+\a})\nonumber\\
		&\leq-\frac f{4\bar n_A}\frac{\vartheta-\frac\D2}{f+\vartheta}x^{i+1}\e+\OO(x^{2i}\e^2).
	\end{align}
This term is negative since $\frac\D2<\vartheta$. Hence, we get
	\begin{align}
		\P[Y_{n+1}^{aA}=k+1|Y_n^{aA}=k]\leq\frac{1}{2}-\frac{\vartheta-\frac\D2}{8\bar n_A(f+\vartheta)}x^{i+1}\e+\OO(x^{2i}\e^2)\equiv p_{aA}^u.
	\end{align}
\end{proof}

To obtain a Markov chain we couple the process $Y_n^{aA}$ on a process $Z_n^u$ via:
	\begin{align}
	&(1)\quad Z_0^u=Y_0^{aA}\\
	&(2)\quad \P[Z_{n+1}^u=k+1|Y_n^{aA}<Z_n^u=k]=p_{aA}^u,\\
	&(3)\quad \P[Z_{n+1}^u=k-1|Y_n^{aA}<Z_n^u=k]=1-p_{aA}^u,\\
	&(4)\quad \P[Z_{n+1}^u=k+1|Y_{n+1}^{aA}=k+1, Y_n^{aA}=Z_n^u=k]=1,\\
	&(5)\quad \P[Z_{n+1}^u=k+1|Y_{n+1}^{aA}=k-1, Y_n^{aA}=Z_n^u=k]=\frac{p_{aA}^u-\P[Y_{n+1}^{aA}=k+1|Y_n^{aA}=k]}{1-\P[Y_{n+1}^{aA}=k+1|Y_n^{aA}=k]}.
	\end{align}
Observe that by construction $Z_n^u\succcurlyeq Y_n^{aA}$, a.s.. The marginal distribution of $Z_n^u$ is the desired Markov chain with transition probabilities 
	\begin{align}
		&\P[Z_{n+1}^u=k+1|Z_n^u=k]=p_{aA}^u,\\
		&\P[Z_{n+1}^u=k-1|Z_n^u=k]=1-p_{aA}^u,
	\end{align}
and invariant measure
	\begin{align}
		\mu(n)=\frac{\prod_{k=1}^{n-1}\left(\frac{1}{2}-C_0x^{i+1}\e\right)}{\prod_{k=1}^{n}\left(\frac{1}{2}+C_0x^{i+1}\e\right)}=\frac{\left(\frac{1}{2}-C_0x^{i+1}\e\right)^{n-1}}{\left(\frac{1}{2}+C_0x^{i+1}\e\right)^{n}}.
	\end{align}
We define the stopping times
\begin{align}
	&T_{i+}^Z\equiv\inf\{\vartheta_n\geq0: Z_n\geq x^i\e K+x^{2i}\e^2 K\},\\
	&T_{(i+1)-}^Z\equiv\inf\{\vartheta_n\geq0: Z_n\leq x^{i+1}\e K\}.
\end{align}
For $x^{i+1}\e\leq z< x^i\e$, we get as before the following bound on the harmonic function
\be
		\P_{zK}[T_{i+}^{Z^u}<T_{(i+1)-}^{Z^u}]
		=\frac{\sum_{n=x^{i+1}\e K+1}^{zK}\left(\frac{1+2C_0x^{i+1}\e}{1-2C_0x^{i+1}\e}\right)^{n-1}}{\sum_{n=x^{i+1}\e K+1}^{x^{i}\e K+x^{2i}\e^2 K}\left(\frac{1+2C_0x^{i+1}\e}{1-2C_0x^{i+1}\e}\right)^{n-1}}
\nonumber\\
		\leq K^{1/4-\a}\exp\left(-\tilde CK^{1/4+3\a}\right).
	\ee
Now we couple $Y_n^{aA}$ on a Markov chain $Z_n^l$ such that $Z_n^l\preccurlyeq Y_n^{aA}$, a.s..
\begin{lemma}
\label{decaylemaAl}
		For $t\in\N$ such that $\vartheta_n\in \left[\t_{aA}^{i-}, \t_{aA}^{i+}\land\t_{aA}^{(i+1)-}\land e^{VK^{\a}}\right]$ there exists  a constant $C_1>0$ such that
	\begin{equation}
		\P[Y_{n+1}=k+1|Y_n^{aA}=k]\geq \frac{1}{2}-C_1x^{i}\e\equiv p_{aA}^l.
	\end{equation}
\end{lemma}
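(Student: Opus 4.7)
The plan is to mirror the argument used for Lemma \ref{decaylemaA}, but to estimate the transition probability from below rather than from above, so the inequalities on $\S(t)$, $n_{AA}(t)$, $n_{aa}(t)$, and on $k/K$ are all reversed. Using the birth/death rates \eqref{raten3}, one starts from the identity
\begin{align*}
\P\bigl[Y_{n+1}^{aA}=k+1\mid Y_n^{aA}=k\bigr]
=\frac12+\frac{\tfrac12 f-\tfrac{f}{4\S(t)}kK^{-1}+\tfrac{f n_{AA}(t)}{k\S(t)}N_{aa}(t)-\tfrac12 D-\tfrac12 c\S(t)}
{f\left(1-\tfrac{kK^{-1}}{2\S(t)}\right)+2\tfrac{fN_{aa}(t)}{k}\tfrac{n_{AA}(t)}{\S(t)}+D+c\S(t)},
\end{align*}
exactly as in \eqref{nomaA}, and one seeks a lower bound on the second summand.

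For the numerator: since $\bar n_A=(f-D)/c$, the zero-order terms $\tfrac12 f-\tfrac12 D-\tfrac12 c\bar n_A$ cancel, so one only needs to track the deviations of $\S(t)$ from $\bar n_A$ and the two explicit $k$-dependent terms. To lower-bound the numerator, I would use: the upper bound $\S(t)\le \bar n_A+3M_\S(x^{2i}\e^2)^{1+\a}$ from Proposition \ref{upSum} (which controls the $-\tfrac12 c\S(t)$ contribution from below), the lower bound $\S(t)\ge \bar n_A-\tfrac{\D+\vartheta}{c\bar n_A}\g_\D x^{2i}\e^2-3M_\S(x^{2i}\e^2)^{1+\a}$ from Proposition \ref{lbSum} (which bounds $-\tfrac{f}{4\S(t)}kK^{-1}$ from below), and the trivial bound $N_{aa}(t)\ge 0$ to drop the non-negative recombination term. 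The only negative contribution of order $x^i\e$ then comes from $-\tfrac{f}{4\S(t)}kK^{-1}$ evaluated with $kK^{-1}\le x^i\e+x^{2i}\e^2$, giving a term bounded below by $-\tfrac{f}{4\bar n_A}x^i\e+\OO(x^{2i}\e^2)$.

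For the denominator: I would bound it from above by a positive constant (essentially $2f+\OO(x^i\e)$) using the rough estimate $\S(t)\le 2\bar n_A$, $N_{aa}\le \g_\D x^{2i}\e^2 K+O(\cdot)$ (from Proposition \ref{ubaa}), and $kK^{-1}\ge x^{i+1}\e$, so that the $N_{aa}/k$ ratio stays bounded. Combining these two estimates, the second summand is bounded below by $-C_1 x^i\e$ for some finite constant $C_1>0$ depending only on $f,D,\bar n_A,\D,\vartheta$, which is the desired conclusion.

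The routine obstacle is simply bookkeeping of the many error terms of order $x^{2i}\e^2$ and $(x^{2i}\e^2)^{1+\a}$, which are all absorbed into the constant $C_1$ for small $\e$ and for $i$ in the allowed range (where $x^i\e\ge K^{-1/4+\a}$ so that all the "small" terms are genuinely smaller than $x^i\e$). No new dynamical input is required beyond the bounds already established in Steps 1--4; the proof is purely a pointwise computation on the transition rate, dual to that of Lemma \ref{decaylemaA}.
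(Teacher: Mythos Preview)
Your approach is exactly what the paper intends: it states only that ``the proof is completely analogous to the proof of Lemma \ref{decaylemaA} and we skip the details,'' and your dual computation (lower-bounding the numerator by dropping the non-negative $N_{aa}$-term, using the bounds from Steps 1--3, and evaluating the $-\tfrac{f}{4\S}kK^{-1}$ term at $kK^{-1}\le x^i\e+x^{2i}\e^2$) is precisely that. One small slip: to conclude $N/D\ge -C_1x^i\e$ when the numerator $N$ can be negative you need a \emph{lower} bound on the denominator, not an upper bound; but since the denominator is trivially bounded below by $D+c\S(t)\ge f-\OO(x^{2i}\e^2)>0$, this is immediate and does not affect the argument.
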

\begin{proof} The proof is completely analogous to the proof of Lemma \ref{decaylemaA}and we skip the details.
\end{proof}
To obtain a Markov chain we couple the process $Y_n^{aA}$ on a process $Z_n^l$ via:
	\begin{align}
	&(1)\quad Z_0^l=Y_0\\
	&(2)\quad \P[Z_{n+1}^l=k+1|Y_n^{aA}>Z_n^l=k]=p_{aA}^l,\\
	&(3)\quad \P[Z_{n+1}^l=k-1|Y_n^{aA}>Z_n^l=k]=1-p_{aA}^l,\\
	&(4)\quad \P[Z_{n+1}^l=k-1|Y_{n+1}=k-1, Y_n^{aA}=Z_n^l=k]=1,\\
	&(5)\quad \P[Z_{n+1}^l=k-1|Y_{n+1}=k+1, Y_n^{aA}=Z_n^l=k]=\frac{\P[Y_{n+1}=k+1|Y_n^{aA}=k]-p_{aA}^l}{\P[Y_{n+1}=k+1|Y_n^{aA}=k]}.
	\end{align}
Observe that by construction $Z_n^l\preccurlyeq Y_n^{aA}$, a.s.. The marginal distribution of $Z_n^l$ is the desired Markov chain with transition probabilities 
	\begin{align}
		&\P[Z_{n+1}^l=k+1|Z_n^l=k]=p_{aA}^l,\\
		&\P[Z_{n+1}^l=k-1|Z_n^l=k]=1-p_{aA}^l.
	\end{align}
Similar to the upper process, we can show that  the lower process reaches $x^{i+1}\e K$ before returning to $x^i\e K+x^{2i}\e^2 K$, with high probability. This concludes the proof of the proposition.
\end{proof}
\paragraph{Step 6:}
In this step we calculate the time which $n_{aA}(t)$ needs to decrease from $x^{i}\e$ to $x^{i+1}\e$ (Proposition \ref{timeaA}).
\begin{proof}[Proof of Proposition \ref{timeaA}]
	Let $Z_n^l\preccurlyeq Y_n^{aA}\preccurlyeq Z_n^u$ be defined as in the step before and $Y_0=Z_0^l=Z_0^u=x^{i}\e K$.\\
	Recalling \eqref{raten3}, we get
\begin{align}
		\lambda_{aA}
		&=fN_{aA}(t)\left(1-\frac{n_{aA}(t)}{2\S(t)}\right)+2fN_{aa}(t)\frac{n_{AA}(t)}{\S(t)}+N_{aA}(t)[D+c\S(t)]\nonumber\\
		&\geq 2fx^{i+1}\e K+\OO(x^{2i}\e^2K)\equiv C_\lambda x^{i+1}\e K\equiv\lambda_{aA}^l,
	\end{align}
	\begin{align}
		\lambda_{aA}
		&=fN_{aA}(t)\left(1-\frac{n_{aA}(t)}{\S(t)}\right)+2fN_{aa}(t)\frac{n_{AA}(t)}{\S(t)}+N_{aA}(t)[D+c\S(t)]\nonumber\\
		&\leq 2fx^{i}\e K+\OO(x^{2i}\e^2K)\equiv C_\lambda x^{i}\e K\equiv \lambda_{aA}^u.
	\end{align}
	Let $n_*:=\inf\{n\geq0:Y_n^{aA}-Y_0^{aA}\leq-(1-x)x^i\e K\}$ the random variable which counts the number of jumps $Y_n^{aA}$ makes until it is smaller than $-(1-x)x^i\e K$. The time between two jumps of $n_{aA}(t)$ is given by $\tau_m-\tau_{m-1}$. 
	It holds that 
	$J_m^u\preceq\tau_m-\tau_{m-1}\preceq J_m^l$, where $J_m^l$ (resp. $J_m^u$) are i.i.d. exponential distributed random variables with parameter $\lambda_{aA}^l$ (resp. $\lambda_{aA}^u$). 
	We want to estimate bounds for the times that the processes $Z_n^u$, resp. $Z_n^l$, need to decrease from $x^{i}\e K$  to $x^{i+1}\e K$. Thus we show, for constants $C_u,C_l>0$, that 
	\begin{align}
	&\text(i) \quad\P\left[\sum_{m=1}^{n_*}J_m^u>\frac{2C_u}{C_\lambda x^{i+1}\e}\right]\leq\exp(-MK^{1/2+2\a}),\label{thetai}\\
	&\text(ii) \quad\P\left[\sum_{m=1}^{n_*}J_m^l<\frac{C_l}{2C_\lambda x^{i}\e}\right]\leq \exp(-MK^{1/2+2\a}).\label{thetaii}
	\end{align}
	We start by showing \eqref{thetai}
We   need to find  $N$ such that, with high probability, $n_*\leq N$. To do this, we use the 	majorising process $Z^u$.
 Let $W_k^u$ be i.i.d. random variables with
		\begin{align}
		 	\P[W_k^u=1]=\frac{1}{2}-C_0x^{i+1}\e, \quad\P[W_k^u=-1]=\frac{1}{2}+C_0x^{i+1}\e,\quad\text{and}\quad\E[W_1^u]=-2C_0x^{i+1}\e.
		\end{align}
 $W_k^u$ records  a birth or a death event of the process $Z^u_n$.   We get
\begin{align}
\P[n_*\leq N]
&\geq\P\left[\exists n\leq N: \sum_{k=1}^nW_k\leq-\left\lfloor(1-x)x^i\e K\right\rfloor\right] \nonumber\\
&\geq\P\left[\sum_{k=1}^NW_k\leq-\left\lfloor(1-x)x^i\e K\right\rfloor\right]\nonumber\\
&\geq  1-\P\left[\sum_{k=1}^N(W_k-\E W_k)\geq  2 NC_0 x^{i+1}\e   -  \left\lfloor(1-x)x^i\e K\right\rfloor\right].
\end{align}
		By  Hoeffding's inequality and choosing  $N=\frac{1-x}{C_0x}K=:C_uK$, we get
\be
\P[n_*\leq  C_uK]\geq  1-\exp\left(-\frac {\left(x^i\e\right)^2 K x(1-x)}2\right)\geq 1-\exp\left(-{K^{1/2+2\a}C_0(1-x)x}/2\right),
\ee
where we used that $x^{i}\e\geq K^{-1/4+\a}$.	
	Thus
	\begin{align}
\P\left[\sum_{m=1}^{n_*}J_m^u>\frac{2C_u}{C_\lambda x^{i+1}\e}\right]&\leq\P\left[\sum_{m=1}^{C_uK}J_m^u>\frac{2C_u}{C_\lambda x^{i+1}\e}\right]+\exp(-K^{1/2+2\a}C_0x(1-x)/2).
\end{align}
By applying the exponential Chebyshev inequality  we get
\be
\P\left[\sum_{m=1}^{C_uK}J_m^u>\frac{2C_u}{C_\lambda x^{i+1}\e}\right]
\leq\exp\left(- C_uK/2\right).
\ee
Next we  show \eqref{thetaii}. For this we need to find $N$ such that $\P[n_*\leq N]$ is very small. For this we use the process $Z^l$.  Let $W_k^l$ be i.i.d. random variables which record a birth or a death event of the process  $Z_n^l$. They satisfy
		\begin{align}
		 	\P[W_k^l=1]=\frac{1}{2}-C_1x^{i}\e, \quad\P[W_k^l=-1]=\frac{1}{2}+C_1x^{i}\e\quad\text{and}\quad\E[W_1^l]=-2C_1x^{i}\e.
		\end{align}
Note that 
	\begin{align}
\P[n_*\leq N]&\leq\P\left[\inf\left\{n\geq0: Z_n^l-Z_0^l\leq-\left\lfloor(1-x)x^i\e K\right\rfloor\right\}\leq N\right]\\
&=\P\left[\exists n\leq N: \sum_{k=1}^n(W_k-\E W_k)\leq 2nC_1x^i\e  -  \left\lfloor(1-x)x^i\e K\right\rfloor\right]\nonumber
\\
&\leq \sum_{n=0}^N\P\left[\sum_{k=1}^n(W_k-\E W_k)\leq 2nC_1x^i\e  -  \left\lfloor(1-x)x^i\e K\right\rfloor\right] .
\end{align}
If we choose  $N=\frac{1-x}{4C_1}K=:C_lK$, using 
 Hoeffding's inequality, we get, for all $n\leq N$,
\begin{align}
 \P\left[\sum_{k=1}^n (W_k-\E W_k)\geq\left\lceil(1-x)x^i\e K\right\rceil-\frac12(1-x)x^i\e K\right]
\leq\exp(-K^{1/2+2\a}C_1(1-x)/2). 
 \end{align}
 Thus
\begin{align}
\P\left[\sum_{m=1}^{n_*}J_m^l<\frac{C_l}{2C_\lambda x^i\e}\right]\leq\P\left[\sum_{m=1}^{C_lK}J_m^l<\frac{C_l}{2C_\lambda x^i\e}\right]+C_lK\exp\left(-K^{1/2+2\a}C_1(1-x)/2\right).
\end{align}
It holds that
\begin{align}
\P\left[\sum_{m=1}^{C_lK}J_m^l>\frac{C_l}{2C_\lambda x^i\e}\right]=1-\P\left[\sum_{m=1}^{C_lK}J_m^l<\frac{C_l}{2C_\lambda x^i\e}\right]=1-\P\left[-\sum_{m=1}^{C_lK}J_m^l>-\frac{C_l}{2C_\lambda x^i\e}\right].
\end{align}
A simple use of  the exponential Chebyshev inequality shows that 
\begin{align}
\P\left[\sum_{m=1}^{C_lK}J_m^l<\frac{C_l}{2C_\lambda x^i\e}\right]\leq\exp\left(-C_lK/2\right).
\end{align}
Thus we have that  $\P\left[\sum_{m=1}^{n_*}J_m^l<\frac{C_l}{2C_\lambda x^i\e}\right]\leq \exp\left(-MK^{1/2+2\a}\right)$, for some constant  $M>0$.
\end{proof}
\paragraph{Step 7:}
In this step it is shown that $n_{aa}(t)$ decreases under the upper bound $\g_\D x^{2i+2}\e^2+M_{aa}(x^{2i+2}\e^2)^{1+\a}$, which we need to proceed the next iteration step, in at least the time $n_{aA}(t)$ needs to decrease from $x^i\e$ to $ x^{i+1}\e$ (Proposition \ref{timeaa}). We set the time to zero when $n_{aA}(t)$ hits $x^i\e$. Hence $n_{aa}(0)\leq\g_\D x^{2i}\e^2+M_{aa}(x^{2i}\e^2)^{1+\a}$.  Let
\begin{align}
	&\theta^+_i(aa)\equiv\inf\left\{t\geq0:n_{aa}(t)\geq\g_\D x^{2i}\e^2+3M_{aa}(x^{2i}\e^2)^{1+\a}\right\},\\
	&\theta^-_i(aa)\equiv\inf\left\{t\geq0:n_{aa}(t)\leq\g_\D x^{2i+2}\e^2\right\}.
\end{align}
For the proof we proceed in three parts. First we show that $n_{aa}(t)$ has the tendency to decrease. For this we construct a majorising process for $n_{aa}(t)$ and show that this process decreases on the given time interval. This process is used in the second part to estimate an upper bound on the time which the $aa$-population needs for the decay from $\g_\D x^{2i}\e^2+M_{aa}(x^{2i}\e^2)^{1+\a}$ to $\g_\D x^{2i+2}\e^2$. As result we will get that $n_{aa}(t)$ reaches the next upper bound before the $aA$-population decreases to $x^{i+1}\e$. Thus in the third part we ensure that $n_{aa}(t)$ stays below the bound until $n_{aA}(t)$ reaches $x^{i+1}\e$.\\[0.3cm]
\paragraph{Part 1:} We show
\begin{proposition}
For $t\in\left[\tau_{aA}^{i-},\tau_{aA}^{i+}\land\tau_{aA}^{(i+1)-}\land e^{VK^\a}\right]$ there are constants $\bar C,\tilde C>0$ such that
		\begin{align}
			\P\left[\theta^+_i(aa)<\theta^-_i(aa)|n_{aa}(0)\leq \g_\D x^{2i}\e^2+M_{aa}(x^{2i}\e^2)^{1+\a}\right]
			\leq\bar CK^{\a/2}\exp(-\tilde CK^{1/2+\a}).
		\end{align}
\end{proposition}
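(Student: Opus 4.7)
The approach closely mirrors the recipe used in Steps 1--3: we discretize the birth-and-death process $N_{aa}(t)$, observe that its jump chain has a uniform downward drift throughout the relevant range of values, couple it to a biased random walk, and then invoke the one-dimensional Gambler's-ruin/equilibrium-potential estimate. Concretely, introduce the embedded jump chain $Y_n^{aa}$ of $N_{aa}(t)$ exactly as in Step 2, and couple it to a Markov chain $Z_n^{aa}$ with constant jump-up probability $p=\frac12-C_0$ in such a way that $Z_n^{aa}\succcurlyeq Y_n^{aa}$. Then the desired event $\{\theta_i^+(aa)<\theta_i^-(aa)\}$ is dominated by the analogous event for $Z_n^{aa}$, which is a classical ruin problem on the discrete interval
$[\g_\D x^{2i+2}\e^2 K,\ \g_\D x^{2i}\e^2 K+3M_{aa}(x^{2i}\e^2)^{1+\a}K]$.

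The first real task is to verify the uniform drift bound
$\P[Y_{n+1}^{aa}=k+1\mid Y_n^{aa}=k]\leq \frac12-C_0$ for \emph{all} $k$ in that entire interval, not just near the reference level $\g_\D x^{2i}\e^2 K$ as in Step 2. Using \eqref{raten2} together with the upper bound on $\S(t)$ (Proposition \ref{upSum}), the lower bound on $\S(t)$ (Proposition \ref{lbSum}), the bounds on $n_{AA}(t)$ (Propositions \ref{lbAA}--\ref{ubAA}) and the a~priori inequality $n_{aA}(t)\leq x^i\e+x^{2i}\e^2$ valid on the given time interval, one computes, in analogy with the calculation at equation \eqref{nomaa},
\begin{align}
\P[Y_{n+1}^{aa}=k+1\mid Y_n^{aa}=k]\leq \frac12+\frac{\tfrac12 f-\tfrac12(D+\D+c\S(t))-\tfrac12 f\,\tfrac{n_{AA}(t)}{\S(t)}+\tfrac{fK\,n_{aA}^2(t)}{8\S(t)k}}{f+D+\D+c\S(t)+\mathrm{(similar)}}.
\end{align}
The critical case is the lower endpoint $k=\g_\D x^{2i+2}\e^2 K$, where the constant birth contribution $\tfrac{f}{4\bar n_A}x^{2i}\e^2$ from the $n_{aA}^2$--term competes most closely with the shrinking death term $\tfrac{f+\D/2}{4\bar n_A}x^{2i+2}\e^2$. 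The definition $x^2=(f+\vartheta)/(f+\D)$ with $\vartheta>\D/2$ is exactly what guarantees $(f+\D/2)x^2>f$, so that the drift at this endpoint is a strictly negative constant; at larger $k$ the death term grows linearly in $k$ and the drift is only more negative. Hence $C_0>0$ can be chosen independent of $i$.

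Once uniform drift is in hand, apply the standard one-dimensional potential formula (as in Step 1):
\begin{align}
\P_{z}\bigl[T_+<T_-\bigr]\leq\Bigl(\tfrac{p}{1-p}\Bigr)^{z_+-z}\leq \exp\!\bigl(-4C_0(z_+-z)\bigr),
\end{align}
with $z_+-z\geq 2M_{aa}(x^{2i}\e^2)^{1+\a}K$. Using the iteration constraint $x^i\e\geq K^{-1/4+\a}$ one has $(x^{2i}\e^2)^{1+\a}K\geq K^{1/2+\a+O(\a^2)}$, which yields the exponent $-\tilde C K^{1/2+\a}$. The polynomial prefactor $\bar C K^{\a/2}$ arises by summing the harmonic-function estimate over the $\OO(K^{\a/2})$ possible initial integer states $z$ in $[\g_\D x^{2i+2}\e^2 K, \g_\D x^{2i}\e^2 K+M_{aa}(x^{2i}\e^2)^{1+\a}K]$, exactly as the analogous prefactor $z/[M_\S (x^{2i}\e^2)^{1+\a}]$ appeared in Step 1.

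The main obstacle compared with Step 2 is that we must control the jump-up probability on the \emph{entire} interval rather than only in a narrow window around one reference level; the key algebraic identity behind this uniform control is the relation $(f+\D/2)x^2>f$, which is exactly why the constants $\g_\D$ and $x$ were chosen as in \eqref{defx} and \eqref{gammadelta}. Once this uniform drift is established, the remaining coupling and one-dimensional potential arguments are entirely parallel to those already carried out in Steps 1--3 and we will not repeat the coupling construction in detail.
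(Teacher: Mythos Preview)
Your proposal is correct and follows essentially the same route as the paper: discretise $N_{aa}$, establish a uniform bound $\P[Y_{n+1}^{aa}=k+1\mid Y_n^{aa}=k]\le \frac12-C_u$ on the whole interval $[\g_\D x^{2i+2}\e^2 K,\ \g_\D x^{2i}\e^2 K+3M_{aa}(x^{2i}\e^2)^{1+\a}K]$ using Propositions~\ref{upSum}, \ref{lbSum} and \ref{ubAA}, couple to a biased simple random walk $Z_n^u\succcurlyeq Y_n^{aa}$, and read off the bound from the one-dimensional equilibrium-potential formula. Your identification of the worst case $k=\g_\D x^{2i+2}\e^2 K$ and the decisive inequality $(f+\D/2)x^2>f$ (equivalent to $\vartheta>\D/2$) is exactly the content of the paper's lemma in Step~7, Part~1. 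One small correction: the polynomial prefactor $\bar C K^{\a/2}$ does not come from summing over initial states but directly from the ratio of geometric sums in the harmonic-function formula (the numerator has order $x^{2i}\e^2K$ terms and the denominator is dominated by its top term), and your exponent computation gives $(x^{2i}\e^2)^{1+\a}K\ge K^{1/2+3\a/2+O(\a^2)}$ rather than $K^{1/2+\a}$, which only strengthens the claimed bound.
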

In this part the same strategy as in Step 5 is used. We couple $n_{aa}(t)$ on a process which  majorise it and show that this process  decrease from 
$\g_\D x^{2i}\e^2+M_{aa}(x^{2i}\e^2)^{1+\a}$ to  $\g_\D x^{2i+2}\e^2$ with high probability. 
\begin{proof}
As before, let $Y_n^{aa}$ be the discretisation of $N_{aa}(t)$. 
We start with the construction of the upper process.
\begin{lemma}
		For $t\in\N$ such that $\vartheta_n\in\left[\tau_{aA}^{i-},\tau_{aA}^{i+}\land\tau_{aA}^{(i+1)-}\land e^{VK^\a}\right]$ there exists  a constant $C_u>0$ such that
	\begin{equation}
		\P[Y^{aa}_{n+1}=k+1|Y_n^{aa}=k]\leq \frac{1}{2}-C_u\equiv p_u.
	\end{equation}
\end{lemma}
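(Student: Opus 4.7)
My plan is to follow verbatim the strategy used for Proposition \ref{ubaa} in Step~2, adapted to the range of $N_{aa}(t)$ relevant in Step~7. From the rates \eqref{raten2} with $N_{aa}(t)=k$, I would write
\[
\P[Y_{n+1}^{aa}=k+1\mid Y_n^{aa}=k]=\frac{b_{aa}}{b_{aa}+d_{aa}}=\frac{1}{2}+\frac{b_{aa}-d_{aa}}{2(b_{aa}+d_{aa})},
\]
and combine $c\bar n_A=f-D$ with the propositions of Steps~1--4 (Propositions \ref{upSum}, \ref{lbSum}, \ref{ubaa}, \ref{lbAA}, \ref{ubAA}), which on the event under consideration give $\S(t)\approx\bar n_A$, $n_{AA}(t)\approx\bar n_A$, and $x^{i+1}\e\le n_{aA}(t)\le x^{i}\e+x^{2i}\e^2$, with corrections of relative size $x^i\e$ or $(x^{2i}\e^2)^{\a}$. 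The relevant range for $k$ in Part~1 is $\g_\D x^{2i+2}\e^2 K\le k\le \g_\D x^{2i}\e^2K+3M_{aa}(x^{2i}\e^2)^{1+\a}K$, outside of which one of the stopping times $\theta_i^{\pm}(aa)$ has already been reached and there is nothing to prove.

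The core algebraic step is to show that the leading part of $b_{aa}-d_{aa}$ is strictly negative on this range. Substituting the above approximations into \eqref{raten2} gives
\[
b_{aa}-d_{aa}=-k(f+\D)+\frac{fK\,n_{aA}^2}{4\bar n_A}+\mathcal{O}\bigl(x^{2i+1}\e^3 K\bigr).
\]
This right-hand side is maximised in the relevant regime when $k$ takes its smallest value $k=\g_\D x^{2i+2}\e^2K$ and $n_{aA}$ its largest value $n_{aA}\approx x^i\e$; inserting $\g_\D=(f+\D/2)/(4\bar n_A(f+\D))$ and $x^2=(f+\vartheta)/(f+\D)$ from \eqref{defx}, it collapses to
\[
\frac{x^{2i}\e^2K}{4\bar n_A(f+\D)}\bigl[f(f+\D)-(f+\D/2)(f+\vartheta)\bigr]+\mathcal{O}\bigl(x^{2i+1}\e^3 K\bigr).
\]
A direct computation gives $(f+\D/2)(f+\vartheta)-f(f+\D)=f(\vartheta-\D/2)+\vartheta\D/2$, which is strictly positive because $\vartheta>\D/2$ by the choice \eqref{defx}. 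Hence the bracket is strictly negative with a positive gap independent of $K$ and $i$.

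Since both $d_{aa}$ and the immigration term $fKn_{aA}^2/(4\bar n_A)$ are of order $x^{2i}\e^2K$ in the relevant range, the denominator $b_{aa}+d_{aa}$ is of the same order, and dividing produces a strictly negative constant $-C_u$, uniform in $k$, $K$, and $i$. This yields the claimed inequality $p_u\le\frac{1}{2}-C_u$. The main obstacle, as in the analogous step of Proposition \ref{ubaa}, is purely bookkeeping: one must verify that the various perturbations delivered by the propositions of Steps~1--4, all of relative size $x^i\e$ or $(x^{2i}\e^2)^{\a}$, are strictly dominated by the constant algebraic gap $f(\vartheta-\D/2)+\vartheta\D/2$, which holds as soon as $\e$ is chosen small enough.
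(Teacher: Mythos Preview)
Your proposal is correct and follows essentially the same route as the paper's proof: writing $\P[Y_{n+1}^{aa}=k+1\mid Y_n^{aa}=k]=\tfrac12+\tfrac{b_{aa}-d_{aa}}{2(b_{aa}+d_{aa})}$, inserting the bounds from Steps~1--4, and reducing the sign question to the algebraic identity $(f+\D/2)(f+\vartheta)-f(f+\D)=f(\vartheta-\D/2)+\vartheta\D/2>0$. The paper organises the same computation after first dividing numerator and denominator by $k$, arriving at the constant $-\frac{(f+\D)[f(\vartheta-\D/2)+\D\vartheta/2]}{2(f+\D/2)(f+\vartheta)}+\mathcal{O}(x^i\e)$, which is your bracket divided by the leading part of $(b_{aa}+d_{aa})/k$. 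One small cosmetic point: your remainder $\mathcal{O}(x^{2i+1}\e^3K)$ should read $\mathcal{O}(x^{3i}\e^3K)$ (i.e.\ a relative correction of size $x^i\e$), but this does not affect the argument.
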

\begin{proof}
We know that, for $t\in\left[\tau_{aA}^{i-},\tau_{aA}^{i+}\land\tau_{aA}^{(i+1)-}\land e^{VK^\a}\right]$, 
$n_{aA}(t)\in\left[x^{i+1}\e,x^i\e+x^{2i}\e^2\right]$. Again with \eqref{raten2} we have
	\begin{align}
		\P[Y^{aa}_{n+1}=k+1|Y_n=k]
		&=\frac{fk\left(1-\frac{n_{AA}(t)}{\S(t)}\right)+\frac{fK}{4\S(t)}n_{aA}^2(t)}{fk\left(1-\frac{n_{AA}(t)}{\S(t)}\right)+\frac{fK}{4\S(t)}n_{aA}^2(t)+k[D+\D+c\S(t)]}\nonumber\\
		&\leq\frac{1}{2}+\frac{\frac{1}{2}f-\frac{f}{2\S(t)}n_{AA}(t)+\frac{fK}{8\S(t)}\frac{n_{aA}^2(t)}{k}-\frac{1}{2}D-\frac{1}{2}c\S(t)-\frac{1}{2}\D}{f\left(1-\frac{n_{AA}(t)}{\S(t)}\right)+\frac{fK}{4\S(t)}\frac{n_{aA}^2(t)}{k}+[D+\D+c\S(t)]}.
	\end{align}
Using Proposition \ref{upSum}, \ref{lbSum} and \ref{ubAA}, we bound  the numerator in the second summand 
from above by
	\be
		-\frac{f}{2\bar n_A}(\bar n_A-x^i\e)+\frac{f\left(1+2x^i\e\right)}{8\bar n_A\g_\D x^2}-\frac{\D}{2}+\mathcal{O}(x^{2i}\e^2)
		\leq-\frac{f(f+\D)(\vartheta-\frac\D2)+\D\vartheta(f+\D)}{2(f+\frac\D2)(f+\vartheta)}+\mathcal{O}(x^{i}\e).
	\ee
Since $\frac\D2<\vartheta$ there exists a constant $C_u>0$ such that
	\begin{align}
		\P[Y_{n+1}=k+1|Y_n=k]\leq\frac{1}{2}-C_u\equiv p_{u}.
	\end{align}
\end{proof}
By replacing $p_{aa}$ by $p_{u}$, we couple $Y_n^{aa}$ on the same way to a process $Z_n^u$  as it was done in Step 2. Observe that by construction $Z_n^u\succcurlyeq Y_n^{aa}$, a.s.. The marginal distribution of $Z_n^u$ is the desired Markov chain with transition probabilities 
		\begin{align}
		&\P[Z_{n+1}^u=k+1|Z_n^u=k]=p_{u},\\
		&\P[Z_{n+1}^u=k-1|Z_n^u=k]=1-p_{u},
	\end{align}
and invariant measure
	\begin{align}
		\mu(n)=\frac{\prod_{k=1}^{n-1}\left(\frac{1}{2}-C_u\right)}{\prod_{k=1}^{n}\left(\frac{1}{2}+C_u\right)}=\frac{\left(\frac{1}{2}-C_u\right)^{n-1}}{\left(\frac{1}{2}+C_u\right)^{n}}.
	\end{align}
We define the stopping times
\begin{align}
		&T^{+}_i(aa)\equiv\inf\left\{t\geq0:Z_n^u\geq \g_\D x^{2i}\e^2K+3M_{aa}(x^{2i}\e^2)^{1+\a}K \right\},\\
		&T^{-}_i(aa)\equiv\inf\left\{t\geq0:Z_n^u\leq\g_\D x^{2i+2}\e^2K\right\}.
	\end{align}
Again with the formula of the equilibrium potential we estimate for
 $\g_\D x^{2i+2}\e^2K\leq zK<\g_\D x^{2i}\e^2K+M_{aa}(x^{2i}\e^2)^{1+\a}K$
\be
		\P_{zK}[T^{+}_i(aa)<T^{-}_i(aa)]
		=\frac{\sum_{n=\g_\D x^{2i+2}\e^2K+1}^{zK}\left(\frac{1+2C_u}{1-2C_u}\right)^{n-1}}{\sum_{n=\g_\D x^{2i+2}\e^2K+1}^{\g_\D x^{2i}\e^2K+3M_{aa}(x^{2i}\e^2)^{1+\a}K }\left(\frac{1+2C_u}{1-2C_u}\right)^{n-1}}
		\leq \bar CK^{\a/2}\exp(-\tilde CK^{1/2+\a}).
	\ee
\end{proof}
\paragraph{Part 2:}
Similar to Step 6, we calculate an upper bound on the time which $n_{aa}(t)$ needs at most to decrease from $\g_\D x^{2i}\e^2+M_{aa}(x^{2i}\e^2)^{1+\a}$ to 
$\g_\D x^{2i+2}\e^2$.
\begin{proposition}
	Let 
		\begin{align}
			\theta_i(aa)\equiv\inf\left\{t\geq0:n_{aa}(t)\leq \g_\D x^{2i+2}\e^2|n_{aa}(0)=\g_\D x^{2i}\e^2+M_{aa}(x^{2i}\e^2)^{1+\a}\right\},
		\end{align}
	the decay time of $n_{aa}(t)$, for $t\in\left[\tau_{aA}^{i-},\tau_{aA}^{i+}\land\tau_{aA}^{(i+1)-}\land e^{VK^\a}\right]$. Then for all $0\leq i\leq\frac{-\ln(\e K^{1/4-\a})}{\ln(x)}$ there exist finite, positive constants $C_u^{aa}$ and $M$, such that
	\begin{align}
		\P\left[\theta_i(aa)>C_u^{aa}\right]\leq\exp\left(-MK^{1/2+2\a}\right)
	\end{align}
\end{proposition}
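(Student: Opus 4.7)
The plan is to recycle the two-step argument from Step~6 (proof of Proposition~\ref{timeaA}), applied now to the majorising chain $Z^u_n$ constructed in Part~1 above. Two ingredients will be needed: (i) an upper bound on the number $n_*$ of discrete jumps that $Z^u_n$ has to make in order to drop by $\Delta N \sim \g_\D(1-x^2)x^{2i}\e^2 K$, and (ii) an upper bound on the total continuous time required for that many jumps of $N_{aa}(t)$.

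For (i), I would introduce i.i.d.\ $\pm 1$ increments $W_k$ with $\P[W_k=+1] = p_u = \tfrac{1}{2} - C_u$ (the strictly negative drift coming from Part~1), choose $N := \Delta N / C_u$ so that the expected total drift $-2C_u N = -2\Delta N$ is twice the target, and apply Hoeffding's inequality to obtain
\begin{align}
	\P[n_* > N] \leq \P\!\left[\sum_{k=1}^N (W_k - \E W_k) \geq \Delta N\right] \leq \exp\!\left(-\frac{(\Delta N)^2}{2N}\right) \leq \exp(-M K^{1/2+2\a}),
\end{align}
where the last bound uses $(\Delta N)^2/N \gtrsim \Delta N \gtrsim x^{2i}\e^2 K \geq K^{1/2+2\a}$, which is precisely the iteration constraint $x^i\e \geq K^{-1/4+\a}$.

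For (ii), I would extract from the rates \eqref{raten2} together with Propositions~\ref{upSum}, \ref{lbSum} and~\ref{ubAA} the uniform lower bound
\begin{align}
	\lambda_{aa}(t) \geq \tfrac{fK}{4\S(t)}\, n_{aA}^2(t) \geq C_\lambda K x^{2i}\e^2,
\end{align}
valid because $n_{aA}(t) \geq x^{i+1}\e$ on the relevant interval and $\S(t)$ stays within an $\OO(x^{2i}\e^2)$-neighbourhood of $\bar n_A$. The inter-jump times of $N_{aa}(t)$ are then stochastically dominated by i.i.d.\ exponentials of parameter $C_\lambda K x^{2i}\e^2$, and a standard Cram\'er--Chernoff bound will yield
\begin{align}
	\P\!\left[\sum_{m=1}^{N} J_m > \frac{2N}{C_\lambda K x^{2i}\e^2}\right] \leq \exp(-c N) \leq \exp(-M' K^{1/2+2\a}).
\end{align}
Taking $C_u^{aa} := 2\g_\D(1-x^2)/(C_u C_\lambda)$, which is an $\OO(1)$ constant independent of $i$, and combining the two displays should then give the claim.

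The main subtlety, as in Step~6, is that $C_u^{aa}$ must be independent of $i$: the number of jumps needed grows with $i$ like $x^{2i}\e^2 K$, while the inverse jump rate shrinks like $1/(K x^{2i}\e^2)$, and these two scales have to cancel exactly. Securing this cancellation requires that the lower bound $\lambda_{aa}(t) \gtrsim K x^{2i}\e^2$ hold uniformly on the random time interval $[\t_{aA}^{i-}, \t_{aA}^{i+}\land\t_{aA}^{(i+1)-}\land e^{VK^\a}]$, which in turn relies on the full package of confinement estimates from Steps~1--5. This bookkeeping is the main technical obstacle; the probabilistic estimates themselves are routine once the correct scales have been identified.
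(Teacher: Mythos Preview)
Your proposal is correct and follows essentially the same route as the paper: bound the number of jumps of the majorising chain $Z^u_n$ via Hoeffding with $N=\g_\D(1-x^2)x^{2i}\e^2K/C_u$, lower-bound the total jump rate $\lambda_{aa}$ by a quantity of order $Kx^{2i}\e^2$ (the paper also keeps the death-rate contribution $(f+\D)\g_\D x^{2i+2}\e^2K$, but your immigration term alone already gives the right order), and finish with an exponential Chebyshev bound. The cancellation of the $x^{2i}\e^2$ factors that produces an $i$-independent constant $C_u^{aa}$ is exactly the point the paper exploits.
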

\begin{proof}
The proof works like the one in Step 6. We calculate an upper bound on the decay time of the majorising process $Z_n^u$. 
	Let $Y_n^{aa}$ and $Z_n^u$ be defined as in the step before and let $W^u$ be i.i.d. random variables with
		\begin{align}
		 	\P[W^u=1]=\frac{1}{2}-C_u, \quad\P[W^u=-1]=\frac{1}{2}+C_u\quad\text{and}\quad\E[W^u_1]=-2C_u.
		\end{align}
	The $W^u$'s record a birth or a death event of $Z_n^{u}$. Similar as in Step 6, we choose $N=\frac{\g_\D(1-x^2)}{C_u}x^{2i}\e^2K=:\bar C x^{2i}\e^2K$ and show that $\P[n_*\leq \frac{\g_\D(1-x^2)}{C_u}x^{2i}\e^2K]\geq1-\exp(-K^{1/2+2\a}C_u\g_\D(1-x^2)/2)$.  It holds
	\begin{align}
		\lambda_{aa}&=fN_{aa}(t)\left(1-\sfrac{n_{AA}(t)}{\S(t)}\right)+\sfrac{fK}{4\S(t)}n_{aA}^2(t)+N_{aa}(t)[D+\D+c\S(t)]\nonumber\\
		&\geq \sfrac{f}{4\bar n_A}x^{2i+2}\e^2K+(f+\D)\g_\D x^{2i+2}\e^2K\nonumber\\
		&\geq \sfrac{2f+\frac\D2}{4\bar n_A}x^{2i+2}\e^2K\equiv C_\lambda x^{2i+2}\e^2K\equiv\lambda_{aa}^l.
	\end{align}
Again, let $\t_m-\t_{m-1}$ be the time between two jumps of $n_{aa}(t)$ and let $J_m^{aa}$ be i.i.d. exponential random variables with parameter $\lambda_{aa}^l$. As in Step 6, using 
the exponential Chebyshev inequality, we get that
$\P\left[\sum_{m=1}^{\bar Cx^{2i}\e^2K}J_m^{aa}>\frac{2\bar C}{C_\lambda x^2}\right]\leq\exp\left(-\bar CK^{1/2+2\a}\right)$
and hence
\begin{align}
\P\left[\sum_{m=1}^{n_*}J_m^{aa}>\frac{2\bar C}{C_\lambda x^2}\right]\leq\exp\left(-MK^{1/2+2a}\right).
\end{align}

\paragraph{Part 3:}
We see that $n_{aa}(t)$ reaches $\g_\D x^{2i+2}\e^2$ before $n_{aA}(t)$ decreases to $x^{i+1}\e$. Similar to Step 2 we can show that once $n_{aa}(t)$ hits $\g_\D x^{2i+2}\e^2$ it will stay close to it an exponentially long time and will not exceed $\g_\D x^{2i+2}\e^2+M_{aa}(x^{2i+2}\e^2)^{1+\a}$ again. Thus we can ensure that $n_{aa}(t)$ is below $\g_\D x^{2i+2}\e^2+M_{aa}(x^{2i+2}\e^2)^{1+\a}$ when $n_{aA}(t)$ reaches $x^{i+1}\e$.
\end{proof}
\paragraph{Step 8:}
For the iteration we have to ensure that the sum process increases on the given time interval from the value $\bar n_A-\frac{\D+\vartheta}{c\bar n_A}\g_\D x^{2i}\e^2$ to $\bar n_A-\frac{\D+\vartheta}{c\bar n_A}\g_\D x^{2i+2}\e^2$ and stays there until the next iteration-step (Proposition \ref{timeSum}). We set the time to zero when the $aA$-population hits $x^i\e$. Hence $\S(0)\geq\bar n_A-\sfrac{\D+\vartheta}{c\bar n_A}\g_\D x^{2i}\e^2-M_{\S}(x^{2i}\e^2)^{1+\a} $. As in Step 7 we divide the proof into three parts.\\[6pt]
\paragraph{Part 1:}
Similarly to Part 1 in Step 7, we show that with high probability $\S(t)$ increases to $\bar n_A-\frac{\D+\vartheta}{c\bar n_A}\g_\D x^{2i+2}\e^2$ before going back to $\bar n_A-\sfrac{\D+\vartheta}{c\bar n_A}\g_\D x^{2i}\e^2-3M_{\S}(x^{2i}\e^2)^{1+\a}$ .
We define stopping times on $\S(t)$:
\begin{align}
	&\t_{\S}^{i-}\equiv\inf\left\{t\geq0:\S(t)\leq\bar n_A-\sfrac{\D+\vartheta}{c\bar n_A}\g_\D x^{2i}\e^2-3M_{\S}(x^{2i}\e^2)^{1+\a}\equiv v(\S_-)-3M_{\S}(x^{2i}\e^2)^{1+\a}\right\},\\
	&\t_{\S}^{i+}\equiv\inf\left\{t\geq0:\S(t)\geq\bar n_A-\sfrac{\D+\vartheta}{c\bar n_A}\g_\D x^{2i+2}\e^2\equiv v(\S_+)\right\},
\end{align}
where $M_{\S}>0$.
\begin{proposition}
	There are constants $\bar C,\tilde C>0$ such that
		\begin{align}
			\P[\t_{\S}^{i-}<\t_{\S}^{i+}|\S(0)\geq\bar n_A-\sfrac{\D+\vartheta}{c\bar n_A}\g_\D x^{2i}\e^2+M_\S(x^{2i}\e^2)^{1+\a}]
			&\leq \bar CK^{\a/2} \exp\left(-\tilde CK^{7/2\a+2\a^2}\right).
		\end{align}
\end{proposition}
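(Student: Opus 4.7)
The plan is to mirror the coupling-and-potential argument used in Step~3 (proof of Proposition~\ref{lbSum}) and in Part~1 of Step~7, since the problem is geometrically analogous: we wish to show that $\S(t)$, which has a restoring drift of order $x^{2i}\e^2$ toward $\bar n_A$, exits the interval $[v(\S_-)-3M_\S(x^{2i}\e^2)^{1+\a},\,v(\S_+)]$ through the \emph{upper} boundary with overwhelming probability. Reset time to $0$ at the entry into the current stage, so that $\S(0)\geq v(\S_-)+M_\S(x^{2i}\e^2)^{1+\a}$, and discretise by setting $Y_n^{\S}=\S(\vartheta_n)K$ at the successive jump times $\vartheta_n$.

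Using the rates \eqref{raten1}, the upper bound $n_{aa}\leq \g_\D x^{2i}\e^2+3M_{aa}(x^{2i}\e^2)^{1+\a}$ from Step~2, and the control on $\S(t)$ from Step~1, I would check that whenever $Y_n^{\S}$ lies in $[(v(\S_-)-3M_\S(x^{2i}\e^2)^{1+\a})K,\,v(\S_+)K]$, there is a constant $C_0>0$ such that
\begin{equation*}
\P[Y_{n+1}^{\S}=k+1\mid Y_n^{\S}=k]\;\geq\; \tfrac12 + C_0\,x^{2i}\e^2.
\end{equation*}
The bias is strictly positive because on this window $\bar n_A-\S(t)\geq \frac{\D+\vartheta}{c\bar n_A}\g_\D x^{2i+2}\e^2$, while the $-\D N_{aa}$ contribution to $d_\S$ only shifts the effective equilibrium by a factor $\D$ rather than $\D+\vartheta$, leaving a favourable gap of order $x^{2i}\e^2$.

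Next, couple $Y_n^{\S}$ from below by a Markov chain $Z_n^{\S}$ with constant step probabilities $p=\tfrac12+C_0 x^{2i}\e^2$ up and $q=\tfrac12-C_0 x^{2i}\e^2$ down, by the mirror of the construction in Step~3, so that $Z_n^{\S}\preccurlyeq Y_n^{\S}$ almost surely; then the event $\{\t_\S^{i-}<\t_\S^{i+}\}$ for $\S$ is contained in the corresponding event for $Z^{\S}$. By the one-dimensional equilibrium-potential formula used in Step~1, with $r=q/p=1-4C_0 x^{2i}\e^2+O((x^{2i}\e^2)^2)$ and depth to the lower barrier $A=4M_\S(x^{2i}\e^2)^{1+\a}$, one obtains
\begin{equation*}
\P[\t_\S^{i-}<\t_\S^{i+}]\;\leq\; \bar C\,K^{\a/2}\,r^{AK}\;\leq\; \bar C\,K^{\a/2}\exp\bigl(-\tilde C\,K\,(x^{2i}\e^2)^{2+\a}\bigr),
\end{equation*}
the prefactor $K^{\a/2}$ absorbing the ratio of the two geometric sums exactly as in Step~1. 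Plugging in $x^i\e\geq K^{-1/4+\a}$ yields $K(x^{2i}\e^2)^{2+\a}\geq K^{(2+\a)(2\a-1/2)+1}=K^{7\a/2+2\a^2}$, producing the claimed bound.

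The main obstacle is to ensure that the drift estimate in the second paragraph holds \emph{uniformly} throughout the excursion, not merely at isolated configurations of $(n_{aa},n_{aA},n_{AA})$. This forces one to intersect with the good events of Steps~1, 2, and~4 on which the auxiliary upper and lower bounds on $n_{aa}$, $n_{AA}$ and $\S(t)$ remain in force up to $\t_\S^{i-}\land\t_\S^{i+}\land e^{VK^\a}$; by those propositions the conditioning introduces an error of only $o(K^{-1})$, which is dominated by the exponential factor above. A secondary subtlety is that the upper target $v(\S_+)$ lies at distance only $O(x^{2i}\e^2)K$ from the start, comparable to the ballistic length $1/(x^{2i}\e^2)$ of the biased chain; however, this is precisely the regime in which the one-sided estimate $r^{AK}$ is sharp, so no further refinement of the harmonic sum is required.
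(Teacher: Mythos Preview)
Your skeleton is exactly the paper's: discretise, couple to a biased nearest-neighbour chain, and read off the exit probability from the one-dimensional equilibrium potential.  The exponent bookkeeping $K(x^{2i}\e^2)^{2+\a}\geq K^{7\a/2+2\a^2}$ is also right.

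The gap is in the drift estimate.  You invoke the \emph{old} Step~2 bound $n_{aa}\leq\g_\D x^{2i}\e^2+3M_{aa}(x^{2i}\e^2)^{1+\a}$, whereas the upper target $v(\S_+)=\bar n_A-\tfrac{\D+\vartheta}{c\bar n_A}\g_\D x^{2i+2}\e^2$ lives at the \emph{new} scale $x^{2i+2}$.  Your sentence ``shifts the effective equilibrium by a factor $\D$ rather than $\D+\vartheta$'' tacitly compares the two contributions at the \emph{same} power of $x$; in fact the competition at $k=v(\S_+)K$ is
\[
\frac{c}{2}(\bar n_A-v(\S_+))-\frac{\D n_{aa}}{2\bar n_A}
\;\leq\;\frac{\g_\D x^{2i}\e^2}{2\bar n_A}\bigl[(\D+\vartheta)x^2-\D\bigr],
\]
and this is positive only if $(\D+\vartheta)x^2>\D$, i.e.\ $\vartheta(f+\D+\vartheta)>\D^2$.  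That inequality \emph{does} hold under the standing hypothesis that $\D$ is small (since $f>\D$ by Assumption~(A2)), but it is not the triviality your wording suggests, and you never check it.

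The paper sidesteps this entirely by first invoking Step~7: it works on the shifted interval $[\t_{aA}^{i-}+\theta_i(aa),\,\t_{aA}^{i+}\land\t_{aA}^{(i+1)-}]$, after $n_{aa}$ has already dropped below $\g_\D x^{2i+2}\e^2+M_{aa}(x^{2i+2}\e^2)^{1+\a}$ in time $O(1)$.  With the \emph{new} $n_{aa}$ bound the numerator at $k=v(\S_+)K$ becomes $-\tfrac{\vartheta\g_\D}{2\bar n_A}x^{2i+2}\e^2+O((x^{2i+2}\e^2)^{1+\a})$, manifestly negative, yielding $\P[Y_{n+1}^{\S}=k-1\mid Y_n^{\S}=k]\leq\tfrac12-C_0x^{2i+2}\e^2$ with no side condition on $\D$.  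If you want to keep your version, insert the missing verification of $(\D+\vartheta)x^2>\D$; otherwise switch to the Step~7 bound and the argument goes through verbatim.
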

\begin{proof}
From the step before we know that $n_{aa}(t)$ decreases under $\g_\D x^{2i+2}\e^2+M_{aa}(x^{2i+2}\e^2)^{1+\a}$ in a time of order 1 and does not exceed this bound once it hits it. Thus, with the knowledge of Step 3, we show that, for  $t\in\left[\t_{aA}^{i-}+\theta_i(aa),\t_{aA}^{i+}\land\t_{aA}^{(i+1)-}\right]$, the sum process has the tendency to increase and exceed the lower bound $\bar n_A-\frac{\D+\vartheta}{c\bar n_A}\g_\D x^{2i+2}\e^2$ before $n_{aA}(t)$ hits $x^{i+1}\e$.
As before, let $Y_n^{\S}$ be the associated discrete process to $\S(t)$.
\begin{lemma}
		For $t\in\N$ such that $\vartheta_n\in\left[\t_{aA}^{i-}+\theta_i(aa),\t_{aA}^{i+}\land\t_{aA}^{(i+1)-}\land e^{VK^\a}\right]$ there exists  a constant $C_0>0$ such that
	\begin{equation}
		\P[Y^{\S}_{n+1}=k-1|Y^{\S}_n=k]\leq \frac{1}{2}-C_0x^{2i+2}\e^2\equiv p_{0}(\S).
	\end{equation}
\end{lemma}
\begin{proof}
To show the lemma we use the results of the steps before. It holds:
\begin{align}
	\P[Y^{\S}_{n+1}=k-1|Y^{\S}_n=k]
	&\leq\frac{1}{2}+\frac{-\frac{1}{2}(f-D)+\frac{1}{2}ckK^{-1}+\frac{\D}{2k}N_{aa}(t)}{f+D+ckK^{-1}+\frac{\D}{k}N_{aa}(t)}.
\end{align}
We estimate the nominator
\begin{align}
	\leq	-\frac{f-D}{2}+\frac{1}{2}ckK^{-1}+\frac{\D}{2}\g_\D x^{2i+2}\e^2k^{-1}K+\OO((x^{2i+2}\e^2)^{1+\a}).
\end{align}
This term assumes its maximum at $k=v(\S_+)$.
If we insert this bound, $\bar n_A-\frac{\D+\vartheta}{c\bar n_A}\g_\D x^{2i+2}\e^2$, we can estimate
\begin{align}
	&\leq-\frac{f-D}{2}+\frac{1}{2}c\bar n_A-\frac{\D+\vartheta}{2\bar n_A}\g_\D x^{2i+2}\e^2+\frac{\D}{2\bar n_A}\g_\D x^{2i+2}\e^2+\OO((x^{2i+2}\e^2)^{1+\a})\nonumber\\
	&\leq-\frac{\vartheta\g_\D}{2\bar n_A}x^{2i+2}\e^2+\OO((x^{2i+2}\e^2)^{1+\a}).
\end{align}
Thus, there exists a constant $C_0>0$ such that
\begin{align}
	\P[Y^{\S}_{n+1}=k-1|Y^{\S}_n=k]\leq\frac{1}{2}-C_0x^{2i+2}\e^2.
\end{align}
\end{proof}
Again we couple $Y^{\S}_n$ on $Z_n^l$ via:
\begin{align}
	&(1)\quad Z_0^l=Y^{\S}_0,\\
	&(2)\quad \P[Z_{n+1}^l=k+1|Y^{\S}_n>Z_n^l=k]=1-p_{0}(\S),\\
	&(3)\quad \P[Z_{n+1}^l=k-1|Y^{\S}_n>Z_n^l=k]=p_{0}(\S),\\
	&(4)\quad \P[Z_{n+1}^l=k-1|Y^{\S}_{n+1}=k-1, Y^{\S}_n=Z_n^l=k]=1,\\
	&(5)\quad \P[Z_{n+1}^l=k-1|Y^{\S}_{n+1}=k+1, Y^{\S}_n=Z_n^l=k]=\frac{p_{0}(\S)-\P[Y^{\S}_{n+1}=k-1|Y^{\S}_n=k]}{1-\P[Y^{\S}_{n+1}=k-1|Y^{\S}_n=k]}.
	\end{align}
Observe that by construction $Z_n^l\preccurlyeq Y^{\S}_n$, a.s.. The marginal distribution of $Z_n^l$ is the desired Markov chain with transition probabilities 
	\begin{align}
		&\P[Z_{n+1}^l=k+1|Z_n^l=k]=1-p_{0}(\S),\\
		&\P[Z_{n+1}^l=k-1|Z_n^l=k]=p_{0}(\S),
	\end{align}
and invariant measure
	\begin{align}
		\mu(n)=\frac{\prod_{k=1}^{n-1}\left(\frac{1}{2}+C_0x^{2i+2}\e^2\right)}{\prod_{k=1}^{n}\left(\frac{1}{2}-C_0x^{2i+2}\e^2\right)}=\frac{\left(\frac{1}{2}+C_0x^{2i+2}\e^2\right)^{n-1}}{\left(\frac{1}{2}-C_0x^{2i+2}\e^2\right)^{n}}.
	\end{align}
Again we get a bound on the harmonic function,  for $\bar n_A-\frac{\D+\vartheta}{c\bar n_A}\g_\D x^{2i}\e^2-M_\S(x^{2i}\e^2)^{1+\a}\leq z<\bar n_A-\frac{\D+\vartheta}{c\bar n_A}\g_\D x^{2i+2}\e^2$, 
\be
		\P_{zK}[T_{\S}^{i-}<T_{\S}^{i+}]
		=\frac{\sum_{n=zK+1}^{Kv(\S_+)}\left(\frac{1-2C_0x^{2i+2}\e^2}{1+2C_0x^{2i+2}\e^2}\right)^{n-1}}{\sum_{n=Kv(\S_-)-M_\S (x^{2i}\e^2)^{1+\a}K+1}^{Kv(\S_+)}\left(\frac{1-2C_0x^{2i+2}\e^2}{1+2C_0x^{2i+2}\e^2}\right)^{n-1}}
		\leq \bar CK^{\a/2}\exp\left(-\tilde CK^{7/2\a+2\a^2}\right).
		\ee

\end{proof}
\paragraph{Part 2:}
As in Step 6, to calculate an upper bound on the time which $\S(t)$ need to increase from $\bar n_A-\frac{\D+\vartheta}{c\bar n_A}\g_\D x^{2i}\e^2-M_\S(x^{2i}\e^2)^{1+\a}$ to
$\bar n_A-\frac{\D+\vartheta}{c\bar n_A}\g_\D x^{2i+2}\e^2$, we estimate the number of jumps of the sum process and the duration of one single jump from above on a given time interval. 
\begin{proposition}
	Let 
		\begin{align}
			\theta_i(\S)\equiv\inf\Big\{t\geq0: &\S(t)\geq\bar n_A-\sfrac{\D+\vartheta}{c\bar n_A}\g_\D x^{2i+2}\e^2|\S(0)=\bar n_A-\sfrac{\D+\vartheta}{c\bar n_A}\g_\D x^{2i}\e^2-M_\S(x^{2i}\e^2)^{1+\a}\Big\},
		\end{align}
	the growth time of $\S(t)$ on the time interval $t\in\left[\t_{aA}^{i-}+\theta_i(aa),\t_{aA}^{i+}\land\t_{aA}^{(i+1)-}\right]$. Then for all $0\leq i\leq\frac{-\ln(\e K^{1/4-\a})}{\ln(x)}$ there exist finite, positive constants, $C_l^\S$ and $M$, such that
	\begin{align}
		\P\left[\theta_i(\S)>C_l^\S\right]\leq\exp\left(-MK^{4\a}\right).
	\end{align}
\end{proposition}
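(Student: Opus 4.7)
The plan mirrors the strategy used in Step 6 and in Part 2 of Step 7. I would split the argument into an upper bound on the number of jumps that $\S(t)$ must perform in order to complete the climb, and a lower bound on the rate at which those jumps occur.

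First, I would work with the minorising Markov chain $Z_n^l \preccurlyeq Y_n^{\S}$ constructed in Part 1 of this step, whose $\pm 1$-valued i.i.d.\ increments $W_k^\S$ satisfy $\E W_k^\S \geq 2C_0 x^{2i+2}\e^2$. The climb length, in units of $1/K$, is
\[
L \equiv \tfrac{\D+\vartheta}{c\bar n_A}\g_\D (1-x^2)\,x^{2i}\e^2 K + M_\S(x^{2i}\e^2)^{1+\a}K = \Theta(x^{2i}\e^2 K),
\]
so choosing $N \equiv \bar C K/x^2$ with $\bar C$ large enough that $N\cdot 2C_0 x^{2i+2}\e^2 \geq 2L$, Hoeffding's inequality applied to $\sum_{k=1}^N(W_k^\S-\E W_k^\S)$ would give
\[
\P[n_*>N] \leq \exp\!\left(-\tfrac{L^2}{2N}\right) \leq \exp\!\left(-cK^{4\a}\right),
\]
where I use $x^{2i}\e^2 \geq K^{-1/2+2\a}$ (the iteration bound \eqref{ubi}) so that $L^2/N \gtrsim x^{4i+2}\e^4 K \gtrsim K^{4\a}$. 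This matches the exponent announced in the statement.

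Second, I would bound the duration of $N$ successive jumps of $\S$. On the time window $[\t_{aA}^{i-}+\theta_i(aa),\t_{aA}^{i+}\land\t_{aA}^{(i+1)-}\land e^{VK^\a}]$, Propositions \ref{upSum} and \ref{lbSum} pin $\S(t)$ in a small neighbourhood of $\bar n_A$, so the total jump rate $\lambda_\S = (f+D)\S(t)K + c\S(t)^2K + \D N_{aa}(t)$ admits the uniform lower bound $\lambda_\S^l \equiv \tfrac{f\bar n_A}{2}K$ for $\e$ small and $K$ large. The interarrival times $J_m^\S$ of $\S(t)$ are then stochastically dominated by $\mathrm{Exp}(\lambda_\S^l)$, and the exponential Chebyshev inequality, applied exactly as in Step 6, gives
\[
\P\!\left[\sum_{m=1}^N J_m^\S > \tfrac{2N}{\lambda_\S^l}\right] \leq \exp(-cN).
\]
Setting $C_l^\S \equiv 4\bar C/(x^2 f \bar n_A)$ and combining the two estimates yields $\P[\theta_i(\S)>C_l^\S] \leq \exp(-MK^{4\a})$ for a suitable $M>0$, since the Hoeffding term dominates because $N \sim K \gg K^{4\a}$.

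The only point that requires some care --- and, in my view, the main obstacle worth flagging --- is the uniform lower bound $\lambda_\S \geq \lambda_\S^l$ over the random time window in question. This rests on the two-sided control of $\S(t)$ from Steps 1 and 3, which hold with probability $1-o(K^{-1})$; restricting to the event on which both bounds are in force absorbs a negligible error compared with $\exp(-MK^{4\a})$.
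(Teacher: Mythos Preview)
Your proposal is correct and follows essentially the same approach as the paper: bound the required number of jumps $N=\Theta(K)$ via Hoeffding on the minorising chain $Z_n^l$ (yielding the $\exp(-MK^{4\a})$ error from $L^2/N \gtrsim x^{4i}\e^4 K \geq K^{4\a}$), lower-bound the total jump rate of $\S$ by a constant times $K$, and control the sum of $N$ exponential waiting times by the exponential Chebyshev inequality. The constants differ cosmetically (the paper takes $N=\tilde C K$ with $\tilde C=\frac{\D+\vartheta}{4C_0 c\bar n_A x^2}\g_\D(1-x^2)$ and writes $\lambda_\S^l=C_\lambda K$), but the structure and the identification of the dominant error term are identical.
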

\begin{proof}
	Let $Y^{\S}_n$ be defined as in the step before and let $W_k$ be i.i.d. random variables with
		\begin{align}
		 	\P[W_k=1]=\frac{1}{2}+C_0x^{2i+2}\e^2, \quad\P[W_k=-1]=\frac{1}{2}-C_0x^{2i+2}\e^2\quad\text{and}\quad\E[W_1]=2C_0x^{2i+2}\e^2,
		\end{align}
	which record a birth or a death event of the lower process $Z_n^l$.
	We choose $N=\frac{\D+\vartheta}{4C_0c\bar n_Ax^2}\g_\D(1-x^2)K=:\tilde CK$ and show that $\P\left[n_*\leq \frac{\D+\vartheta}{2C_0c\bar n_Ax^2}\g_\D(1-x^2)K\right]\geq1-\exp\left(-K^{4\a}C_0\g_\D(\D+\vartheta)x^2(1-x^2)/2\right) $.\\
We estimate from above the time the process $\S(t)$ needs to make  one jump:
	\begin{align}
		\lambda_{\S}=f\S(t)K+\S(t)K[D+\D+c\S(t)]+\D N_{aa}(t)\geq C_\lambda K\equiv\lambda_{\S}^l.
	\end{align}
As before let $\t_m-\t_{m-1}$ the time between two jumps of $\S(t)$ and let $J_m^\S$ are i.i.d. exponential random variables with parameter $\lambda_{\S}^l$. As in Step 6, by applying the exponential Chebyshev inequality, we get that
$\P\left[\sum_{m=1}^{\tilde CK}J_m^\S>\frac{2\tilde C}{C_\lambda}\right]\leq\exp\left(-\bar K\tilde C/2\right)$
and hence
\begin{align}
\P\left[\sum_{m=1}^{n_*}J_m^\S>\frac{2\tilde C}{C_\lambda}\right]\leq\exp\left(-MK^{4\a}\right).
\end{align}
\end{proof}
\paragraph{Part 3:}
For the iteration we have to ensure that once $\S(t)$ hits the upper bound $\bar n_A-\frac{\vartheta+\D}{c\bar n_A}\g x^{2i+2}\e^2$ it will stay closed to it for an exponential time on the given time interval. This ensure us that the sum process stays in a small enough neighbourhood of $\bar n_A-\frac{\vartheta+\D}{c\bar n_A}\g x^{2i+2}\e^2$ when $n_{aA}(t)$ hits $x^{i+1}\e$ and the next iteration step can start.
\begin{proposition}
\label{staysum}
Assume that $\S(0)=\bar n_A-\frac{\vartheta+\D}{c\bar n_A}\g x^{2i+2}\e^2$. For all $M>0$, let 
\begin{align}
\hat\tau_{\S}^{\a}\equiv\inf\{t>0: \S(t)-\bar n_A+\frac{\vartheta+\D}{c\bar n_A}\g x^{2i+2}\e^2\leq -M(x^{2i+2}\e^2)^{1+\a}\}.
\end{align}
Then there exists a constant $M_{\S}>0$ such that
\begin{align}
\P[\hat\tau_{\S}^{\a}<\tau_{aA}^{i+}\land\tau_{aA}^{(i+1)-}\land e^{VK^{\a}}]=o(K^{-1}).
\end{align}
\end{proposition}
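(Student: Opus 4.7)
The plan is to adapt verbatim the strategy of Step 3 (the proof of Proposition \ref{lbSum}) but at the refined scale $\bar n_A-\frac{\vartheta+\D}{c\bar n_A}\g_\D x^{2i+2}\e^2$ instead of the cruder scale $\bar n_A-\frac{\vartheta+\D}{c\bar n_A}\g_\D x^{2i}\e^2$. The reason this works is that Step 7 Part 3 has just established the finer upper bound $n_{aa}(t)\leq \g_\D x^{2i+2}\e^2+M_{aa}(x^{2i+2}\e^2)^{1+\a}$ on the relevant time window, so the negative drift of $\S(t)K$ away from its new reference level is $\Theta(x^{2i+2}\e^2)$, analogous to the $\Theta(x^{2i}\e^2)$ drift exploited in Step 3.

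First I would introduce the difference process
$$X_t^{l\S}=\S(t)K-\left(\bar n_A-\frac{\vartheta+\D}{c\bar n_A}\g_\D x^{2i+2}\e^2\right)K$$
and the stopping times $T_0^{X,l\S}=\inf\{t\geq0:X_t^{l\S}=0\}$ and $T_{\a,M_\S}^{X,l\S}=\inf\{t\geq0:X_t^{l\S}\leq -M_\S(x^{2i+2}\e^2)^{1+\a}K\}$. Passing to the embedded discrete-time chain $Y_n^{l\S}$ at the jump times $\vartheta_n$, and plugging the rates \eqref{raten1} together with the Step 7 upper bound on $n_{aa}(t)$ and the Proposition \ref{upSum} upper bound on $\S(t)$, I would check that
$$\P\bigl[Y_{n+1}^{l\S}=-k-1\mid Y_n^{l\S}=-k\bigr]\leq \tfrac12-C_0\,x^{2i+2}\e^2\equiv p_{\S},$$
for $1\leq k\leq M_\S(x^{2i+2}\e^2)^{1+\a}K$. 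This is the exact analogue of the lemma in the proof of Proposition \ref{lbSum} with $x^{2i}\e^2$ replaced by $x^{2i+2}\e^2$; the computation is mechanical.

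Next I would couple $Y_n^{l\S}$ to a minorising Markov chain $Z_n^{l\S}\preccurlyeq Y_n^{l\S}$ using the six-line construction from Step 3, which has stationary measure
$$\mu(n)=\frac{\left(\tfrac12+C_0x^{2i+2}\e^2\right)^{n-1}}{\left(\tfrac12-C_0x^{2i+2}\e^2\right)^n},$$
and apply the one-dimensional equilibrium-potential formula (\cite{BH15}, Eq.~7.1.57) to bound
$$\P_0\bigl[T_{\a,M_\S}^{Z^{l\S}}<T_0^{Z^{l\S}}\bigr]\leq \exp\!\left(-\tilde C\,M_\S^2\,(x^{2i+2}\e^2)^{1+2\a}K\right)\leq \exp\!\left(-\tilde C' K^{3\a}\right),$$
where we used $x^i\e\geq K^{-1/4+\a}$. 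A geometric decomposition into returns to zero, together with the Hoeffding-type estimate on the number of returns achievable in time $e^{VK^\a}$ carried out at the end of the proof of Proposition \ref{upSum}, then upgrades this to the required $o(K^{-1})$ bound on $\P[\hat\t_\S^{\a}<\t_{aA}^{i+}\land\t_{aA}^{(i+1)-}\land e^{VK^\a}]$.

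The only point that warrants a short check, and is mildly the ``hard part'' of the argument, is that all the input estimates from the earlier steps (Propositions \ref{upSum}, \ref{ubaa}, \ref{lbAA}, \ref{ubAA}) remain in force on the time window $[\t_{aA}^{i-}+\theta_i(aa),\t_{aA}^{i+}\land\t_{aA}^{(i+1)-}\land e^{VK^\a}]$ on which we now operate. This is immediate because each of those propositions was proved on this very window (their statements already include the cutoff at $\t_{aA}^{i+}\land\t_{aA}^{(i+1)-}\land e^{VK^\a}$), and the finer control of $n_{aa}$ provided by Step 7 Part 3 is stronger than the bound used in Proposition \ref{lbSum}. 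Once these inputs are in place the proof is a transcription of Step 3, and no new probabilistic idea is needed.
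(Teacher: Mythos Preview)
Your proposal is correct and follows essentially the same approach as the paper: define the difference process of $\Sigma(t)K$ about the refined reference level $\bar n_A-\frac{\vartheta+\D}{c\bar n_A}\g_\D x^{2i+2}\e^2$, discretise, prove the downward-jump probability bound $\tfrac12-\hat C_0 x^{2i+2}\e^2$ using the Step~7 control on $n_{aa}$, and then rerun the Step~3 coupling and return-to-zero argument. The paper's own proof is just a terse pointer back to Step~3 with these substitutions, so your write-up is in fact more detailed than theirs.
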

\begin{proof}
The proof is similar to Step 3. Again we define the difference process $\hat X_t$ between $\S(t)K$ and $\bar n_AK-\frac{\vartheta+\D}{c\bar n_A}\g_\D x^{2i+2}\e^2K$, which is a branching process with the same rates as $\S(t)$:
\bea
\hat X_t&=&\S(t)K-\bar n_AK+\frac{\vartheta+\D}{c\bar n_A}\g_\D x^{2i+2}\e^2K\\
\hat T_0^X&\equiv&\inf\{t\geq0:\hat X_t=0\}\\
\hat T_{\a,M_{\S}}^X&\equiv&\{t\geq0: \hat X_t\leq -M_{\S}(x^{2i}\e^2)^{1+\a}\}.
\eea
Let $\hat Y_n$ be the discrete process associated to $\hat X_t$, obtained as described in Step 1.
\begin{lemma}
For $t\in\N$ such that $\vartheta_n\in\left[\t_{aA}^{i-}+\theta_i(aa),\t_{aA}^{i+}\land\t_{aA}^{(i+1)-}\land e^{VK^{\a}}\right]$, there exists a constant $\hat C_0>0$ such that
\begin{align}
\P[\hat Y_{n+1}=-k-1|\hat Y_n=-k]\leq\frac12-\hat C_0x^{2i+2}\e^2\equiv\hat p_\S.
\end{align}
\end{lemma}
The proof is a re-run of Step 3 by using the rates \eqref{raten1}.
The rest of the proof of Proposition \ref{staysum} is similar to Step 3.
\end{proof}

\paragraph{Final Step:}\\[6pt]
\paragraph{Calculation of the Decay Time of $n_{aA}$:}
The following proves Theorem \ref{main1} (ii). Set 
\begin{align}
\sigma\equiv\frac{-\ln(\e K^{1/4-\a})}{\ln(x)}.
\end{align}
Observe that $x^\sigma\e\geq K^{-1/4+\a}$ is just the value until which we can control the decay of $n_{aA}(t)$. Thus, to calculate the time of the controlled decay of the $aA$-population we iterate the system, described above, until $i=\sigma$.
Observe that $\sum_{j=0}^{\s-1}\frac{C_u}{x^j\e}=\frac{C_ux}{1-x}(K^{1/4-\a}-\e^{-1})\geq\frac{C_lx}{1-x}(K^{1/4-\a}-\e^{-1})=\sum_{j=0}^{\s-1}\frac{C_l}{x^j\e}$ and that
the $\theta_j(aA)=\tau_{aA}^{(j+1)-}-\tau_{aA}^{j+}$ are independent random variables.
Thus, for some  constant $\tilde M>M>0$, we get
\begin{align}
\label{time}
&\P\left[\frac{C_ux}{1-x}\left(K^{1/4-\a}-\e^{-1}\right)t\geq\t_{aA}^{\sigma-}-\t_{aA}^{0+}\geq\frac{C_lx}{1-x}\left(K^{1/4-\a}-\e^{-1}\right)\right]\\
	&=\P\left[\frac{C_ux}{1-x}\left(K^{1/4-\a}-\e^{-1}\right)\geq\sum_{j=0}^{\sigma-1}\t_{aA}^{(j+1)-}-\t_{aA}^{j+}\geq\frac{C_lx}{1-x}\left(K^{1/4-\a}-\e^{-1}\right)\right]\\
	&\geq\P\left[\frac{C_u}{x\e}\geq\theta_1(aA)\geq\frac{C_l}{x\e},...,\frac{C_u}{x^\s\e}\geq\theta_\s(aA)\geq\frac{C_l}{x^\s\e}\right]\\
	&\geq1-\s\exp(-MK^{1/2+2\a})
	\geq1-\exp(-\tilde MK^{1/2+2\a}).
\end{align}
\begin{remark}
A function $f(t)=\frac{1}{t}$ needs a time $t_1-t_0=K^{1/4-\a}-\frac{1}{\e}$ to decrease from $f(t_0)=\e$ to $f(t_1)=K^{-1/4+\a}$. Compared to the decay-time of $n_{aA}$ we see that it is of the same order. Thus the stochastic process behaves as the dynamical system.
\end{remark}
\paragraph{Survival until Mutation:}
Now we prove Theorem \ref{main2}.We already know that there is no mutation before a time of order $\frac1{K\mu_K}$ (cf. Lemma \ref{lemma2a} and\eqref{muttime}). Since we have seen that the duration of the first step is $\OO(\ln K)$ and the time 
needed for the second step is bounded, the left hand side of \eqref{mu} ensures that the first two phases are ended before the occurrence of a new mutation. Thus we get the first statement of \eqref{result}. \\
To justify the second statement of \eqref{result}, we have to calculate an upper bound on the mutation time such that there are still enough $aA$-individuals in the population, when the next mutation to a new allele occurs. We saw that we can control the process only until the $aA$-population has decreased to $K^{-1/4+\a}$. Thus we have to verify that the mutation time is smaller than $\OO\left(K^{1/4-\a}\right)$, the time the process $n_{aA}$ needs to decrease to $K^{-1/4+\a}$.\\
The mutation rate of the whole population is the sum of the mutation rates of each subpopulation.
For $t\in[\t_{aA}^{0+},\t_{aA}^{\sigma-}]$ with high probability, the new mutation occurs in the $AA$-population, because $n_{aa}(t)$ and $n_{aA}(t)$ are very small.\\
Let 
\begin{equation}
	p_A(t)=\frac{n_{AA}(t)+\frac{1}{2}n_{aA}(t)}{n_{aa}(t)+n_{aA}(t)+n_{AA}(t)}
\end{equation}
be the relative frequency of $A$-alleles in the population at time $t$. The mutation rate of the $AA$-population is given by
\begin{equation}
\label{mut}
	\mu_{K}^{AA}=\mu_Kfp_A(t)n_{AA}(t) K.
\end{equation}
For $t\in[\t_{aA}^{0+},\t_{aA}^{\sigma-}]$ we know from the results before that $n_{AA}$ is in an $\e$-neighbourhood of its 
equilibrium, $\bar n_A$, and $n_{aA},n_{aa}\leq\e$. We can estimate $\mu_K^{AA}$: 
\begin{align}
\mu_{K}^{AA}=\mu_Kf\bar n_A K+\mathcal{O}(\e).
\end{align}
From \eqref{mut} we get that the time of a new mutation is smaller or equal to $\frac{1}{f\bar n_AK\mu_K}+\OO(\e)$. Thus to ensure that we still have $a$-alleles in the population, we have to choose $\mu_K$ in such a way that 
\begin{align}
\frac{1}{f\bar n_aK\mu_K}&\ll \frac{C_lx}{1-x}K^{1/4-\a},		
\end{align}
since we can ensure the survival of $n_{aA}(t)$ until the time $\frac{C_lx}{1-x}K^{1/4-\a}$ (cf. \eqref{time}). Hence, the right hand side of \eqref{mu} gives us that a new mutation occurs before the $aA$-population died out. This finishes the proof of Theorem \ref{main2}.


\end{document}